\DeclareMathAlphabet{\mathpzc}{OT1}{pzc}{m}{it}
\newcommand{\mc}[1]{\mathcal #1}
\newcommand{\R}{\mathbb R}
\newcommand{\N}{\mathbb N}
\newcommand{\C}{\mathbb C}
\newcommand{\E}{\mathbb E}
\newcommand{\op}[1]{\operatorname{#1}}
\newcommand{\ms}[1]{\mathscr{#1}}
\renewcommand{\epsilon}{\varepsilon}
\renewcommand{\bar}[1]{\overline{#1}}
\renewcommand{\tilde}{\widetilde}
\newtheorem{theorem}{Theorem}[section]
\newtheorem*{theorem*}{Theorem}
\newtheorem{lemma}[theorem]{Lemma}
\newtheorem{corollary}[theorem]{Corollary}
\newtheorem{proposition}[theorem]{Proposition}
\theoremstyle{definition}
\newtheorem{definition}[theorem]{Definition}
\newtheorem{remark}[theorem]{Remark}
\newtheorem{example}[theorem]{Example}
\newcommand{\dist}{{\rm dist}}
\numberwithin{equation}{section}
\renewcommand{\vec}[1]{\bm{#1}}
\begin{document}

\title[]{Sample Paths of White Noise in Spaces with Dominating Mixed Smoothness}
\author{Felix Hummel}
\address{Technical University of Munich\\ Department of Mathematics \\ Boltzmannstra{\ss}e 3\\ 85748 Garching bei M\"unchen \\ Germany}
\email{hummel@ma.tum.de}
\thanks{}
\date{\today}
\subjclass[2020]{Primary: 60G17; Secondary: 60G15, 60G51, 42B35, 60H30}
\keywords{White noise, Besov spaces, dominating mixed smoothness, regularity, boundary noise}

\begin{abstract}
    The sample paths of white noise are proved to be elements of certain Besov spaces with dominating mixed smoothness. Unlike in isotropic spaces, here the regularity does not get worse with increasing space dimension. Consequently, white noise is actually much smoother than the known sharp regularity results in isotropic spaces suggest. An application of our techniques yields new results for the regularity of solutions of Poisson and heat equation on the half space with boundary noise. The main novelty is the flexible treatment of the interplay between the singularity at the boundary and the smoothness in tangential, normal and time direction.
\end{abstract}
\maketitle

\section{Introduction}
There are many works studying the regularity of different kinds of stochastic noise. Oftentimes, regularity results are formulated in terms of Besov spaces. Classical results on the H\"older regularity of sample paths of a Brownian motion have been improved by using Besov spaces and Besov-Orlicz spaces in \cite{Ciesielski_1991,Ciesielski_1993}. Similar results have been obtained for Feller processes in \cite{Schilling_1997,Schilling_1998,Schilling_2000}, for a summary see \cite[Section 5.5]{Boettcher_Schilling_Wang_2013}, and for Brownian motions with values in Banach spaces in \cite{Hytonen_Veraar_2008}. Closely related to these works are characterizations of the Besov regularity of white noise. For a Gaussian white noise on the torus such characterizations are given in \cite{Veraar_2011}. L\'evy white noise on the torus was studied in \cite{Fageot_Unser_Ward_2017}. Global regularity results for Gaussian and L\'evy white noise are given in \cite{Aziznejad_Fageot_Unser_2018} and \cite{Fageot_Fallah_Unser_2017}.\\
Most of these works have in common that the regularity results are shown to be sharp up to possibly some minor improvements in some of the references. For an $n$-dimensional Gaussian white noise it is shown for example, that it has a smoothness of exactly or almost $-\frac{n}{2}$ but not more than $-\frac{n}{2}$, depending on the scale of isotropic function spaces. In particular, regularity seems to get worse with increasing dimension. The aim of this paper is to show that these results can be improved for Gaussian as well as L\'evy white noise if one works with spaces of dominating mixed smoothness. Roughly speaking, the following results states that an $n$-dimensional Gaussian white noise has local smoothness $-\frac{1}{2}-\epsilon$ separately in each direction, while previous results state that it has regularity $-\frac{n}{2}$ simultanously in all directions.
\begin{theorem}\label{Thm:Completely_Mixed}
 Let $1<p<\infty$ and $\epsilon,T>0$. Then the restriction of an $n$-dimensional Gaussian white noise on $\R^n$ to $[0,T]^n$ has a modification $\eta$ such that
 \[
  \mathbb{P}\left(\eta\in S^{(-\tfrac{1}{2}-\epsilon,\ldots,-\tfrac{1}{2}-\epsilon)}_{p,p}B([0,T]^n)\right)=1.
 \]
\end{theorem}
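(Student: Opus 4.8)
The plan is to combine a tensorised Littlewood--Paley description of the spaces with dominating mixed smoothness with the fact that the pieces of a smoothed white noise are, at each point, centred Gaussian variables whose variance is computed exactly from the $L_2$-norm of the smoothing kernel, so that membership in the Besov space reduces to the convergence of a geometric-type series.

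First I would write $\xi$ for the white noise on $\R^n$ and fix a tensor product cut-off $\varphi=\varphi_1\otimes\dots\otimes\varphi_n$ with $\varphi_i\in C_c^\infty(\R)$, $0\le\varphi_i\le 1$, and $\varphi_i\equiv 1$ on a neighbourhood of $[0,T]$, so that $\varphi\xi$ is a compactly supported random tempered distribution agreeing with $\xi$ on $(0,T)^n$. Let $(\Delta_{\vec j})_{\vec j\in\N_0^n}$ be a Littlewood--Paley decomposition of dominating mixed type, i.e.\ $\Delta_{\vec j}=\Delta^{(1)}_{j_1}\otimes\dots\otimes\Delta^{(n)}_{j_n}$ with convolution kernels $\psi_{\vec j}(x)=\prod_{i=1}^n\psi_{j_i}(x_i)$, where $\psi_j$ is Schwartz with the usual scaling $\psi_j=2^j\psi_0(2^j\cdot)$ for $j\ge1$ and $\|\psi_j\|_{L_2(\R)}^2\sim 2^j$. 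Because $\Delta_{\vec j}$ is smoothing, for each fixed $x$ the random variable $\xi_{\vec j}(x):=\bigl(\Delta_{\vec j}(\varphi\xi)\bigr)(x)=\langle\xi,\varphi\,\psi_{\vec j}(x-\cdot)\rangle$ is a centred Gaussian, and by the defining covariance of white noise
\[
 \E\,|\xi_{\vec j}(x)|^2=\|\varphi\,\psi_{\vec j}(x-\cdot)\|_{L_2(\R^n)}^2=\prod_{i=1}^n\int_\R\varphi_i(y_i)^2\,\psi_{j_i}(x_i-y_i)^2\,dy_i=:\prod_{i=1}^n g_{j_i}(x_i).
\]
Since $0\le\varphi_i\le1$, each factor satisfies $g_j(t)\le\|\psi_j\|_{L_2(\R)}^2\lesssim 2^j$ uniformly in $t$, and the Schwartz decay of $\psi_j$ gives $g_j(t)\lesssim_N 2^j\bigl(1+2^j\dist(t,\supp\varphi_i)\bigr)^{-N}$ for every $N$. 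Hence $\int_\R g_j(t)^{p/2}\,dt\lesssim 2^{jp/2}$, and therefore $\int_{\R^n}\bigl(\E|\xi_{\vec j}(x)|^2\bigr)^{p/2}dx\lesssim 2^{p|\vec j|_1/2}$, where $|\vec j|_1=j_1+\dots+j_n$. It is here that the boundedness of the domain, encoded in the compact support of $\varphi$, is essential: on all of $\R^n$ white noise does not belong to the space at all.

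Next, using the equivalence of Gaussian moments, $\E|\xi_{\vec j}(x)|^p=c_p\bigl(\E|\xi_{\vec j}(x)|^2\bigr)^{p/2}$, and since the third index of the target space equals $p$, the Littlewood--Paley characterisation together with Tonelli's theorem yields
\[
 \E\,\|\varphi\xi\|_{S^{\vec r}_{p,p}B(\R^n)}^p\ \lesssim\ \sum_{\vec j\in\N_0^n}2^{p\,\vec r\cdot\vec j}\int_{\R^n}\E|\xi_{\vec j}(x)|^p\,dx\ \lesssim\ \sum_{\vec j\in\N_0^n}2^{p(\vec r\cdot\vec j+|\vec j|_1/2)}.
\]
For $\vec r=(-\tfrac12-\epsilon,\dots,-\tfrac12-\epsilon)$ the exponent equals $-p\,\epsilon\,|\vec j|_1$, so the sum splits into $n$ convergent geometric series and $\E\|\varphi\xi\|_{S^{\vec r}_{p,p}B(\R^n)}^p<\infty$. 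In particular the same estimate shows that the partial sums of $\sum_{\vec j}\Delta_{\vec j}(\varphi\xi)$ form a Cauchy sequence in $L_p\bigl(\Omega;S^{\vec r}_{p,p}B(\R^n)\bigr)$; let $\tilde\eta$ denote its limit, so that $\tilde\eta\in S^{\vec r}_{p,p}B(\R^n)$ almost surely.

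Finally I would check that $\tilde\eta$ is a modification of $\varphi\xi$: since $S^{\vec r}_{p,p}B(\R^n)\hookrightarrow\Schw'(\R^n)$ and $\sum_{\vec j}\Delta_{\vec j}\phi\to\phi$ in $\Schw(\R^n)$ for every $\phi$, one has $\langle\tilde\eta,\phi\rangle=\lim_{m}\bigl\langle\varphi\xi,\textstyle\sum_{|\vec j|\le m}\Delta_{\vec j}\phi\bigr\rangle=\langle\varphi\xi,\phi\rangle$ almost surely for each $\phi$. Taking $\phi$ supported in $(0,T)^n$ and restricting to $[0,T]^n$, the distribution $\eta:=\tilde\eta|_{[0,T]^n}$ is a modification of $\xi|_{[0,T]^n}$ with $\eta\in S^{\vec r}_{p,p}B([0,T]^n)$ almost surely, which is the assertion. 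The step I expect to require the most care is the passage from the one-dimensional kernel bounds to the mixed estimate $\int_{\R^n}(\E|\xi_{\vec j}(x)|^2)^{p/2}dx\lesssim 2^{p|\vec j|_1/2}$ together with its compatibility with the restriction-based norm on $[0,T]^n$ (in particular the role of $0\le\varphi_i\le1$ in keeping the smoothed variance at $\lesssim 2^{|\vec j|_1}$ near the edge of the support); everything after that is the geometric-series computation above.
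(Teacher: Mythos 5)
Your argument is correct in its essentials, but it is a genuinely different proof from the one in the paper. The paper never touches the Littlewood--Paley blocks of the noise directly: it slices the noise via Proposition \ref{Prop:WhiteNoiseIndicatorTensor}, shows that $t\mapsto\eta(\mathbbm{1}_{(0,t]}\otimes\cdot)$ is a Brownian motion with values in a one-dimensional Besov space (Theorem \ref{Prelim:Lemma:ContinuityProbability_Mixed}), invokes the Hyt\"onen--Veraar regularity theorem for Banach-space-valued Brownian motion (Theorem \ref{Thm:Brownian_Paths}), and then iterates through the identification of $S^{\bar s}_{p,p}B([0,T]^n)$ with iterated Besov spaces (Corollary \ref{Cor:Isomorphies_Dominating_Mixed_Domain}). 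Your route is a direct first-moment computation on the tensorised dyadic blocks: the identity $\E|\xi_{\vec j}(x)|^2=\|\varphi\,\psi_{\vec j}(x-\cdot)\|_{L_2}^2$, the factorised bound $\int_{\R^n}\bigl(\E|\xi_{\vec j}(x)|^2\bigr)^{p/2}dx\lesssim 2^{p|\vec j|_1/2}$, Gaussian moment equivalence, and Tonelli reduce everything to the convergent series $\sum_{\vec j}2^{-p\epsilon|\vec j|_1}$; all the steps (the uniform bound $g_j\lesssim 2^j$ from $0\le\varphi_i\le1$, the off-support decay, the Cauchy-sequence construction of $\tilde\eta$ in $L_p(\Omega;S^{\vec r}_{p,p}B(\R^n))$, and the verification that its restriction is a modification) check out. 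What each approach buys: yours is shorter and more self-contained, gives the quantitatively stronger conclusion $\E\|\varphi\xi\|^p_{S^{\vec r}_{p,p}B(\R^n)}<\infty$, and gets measurability of the Besov-valued variable for free from the $L_p(\Omega;E)$-limit, so none of the duality and cylindrical-$\sigma$-field machinery of Lemma \ref{Lemma:BesovBorelInCylindrical} is needed. On the other hand it is tied to Gaussianity at two points (covariance equals the $L_2$ inner product; equivalence of Gaussian moments), whereas the paper's slicing argument is built to cover general L\'evy white noise and, more importantly, to yield the finer companion results: the space-time splitting of Theorem \ref{Thm:Partially_Mixed} with exponent exactly $-1/2$ and third index $\infty$ in the time variable (no $\epsilon$-loss there, coming from the sharp $B^{1/2}_{p,\infty}$ statement of Hyt\"onen--Veraar) and the negative assertion $\Prob(\eta\in S^{(-1/2,\ldots,-1/2)}_{p,p}B([0,T]^n))=0$, neither of which your moment bound reaches. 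Two small points to tidy up if you write this out: justify joint measurability of $(x,\omega)\mapsto\Delta_{\vec j}(\varphi\xi)(x)$ before applying Tonelli (a.s. smoothness in $x$ by Paley--Wiener plus pointwise measurability suffices), and phrase the final restriction step for test functions in $\mathscr{S}_0([0,T]^n)$, i.e. supported in the closed cube, which is how the paper defines restriction; this is harmless since your $\varphi$ equals $1$ on a neighbourhood of $[0,T]^n$.
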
\noindent
In this theorem $S^{(-\tfrac{1}{2}-\epsilon,\ldots,-\tfrac{1}{2}-\epsilon)}_{p,p}B([0,T]^n)$ denotes a Besov space with dominating mixed smoothness. It can be identified with the iterated Besov space
\[
 B^{-\tfrac{1}{2}-\epsilon}_{p,p}\bigg([0,T];B^{-\tfrac{1}{2}-\epsilon}_{p,p}\big([0,T];\ldots B^{-\tfrac{1}{2}-\epsilon}_{p,p}([0,T])\ldots\big)\bigg)
\]
and with the tensor product
\[
 B^{-\tfrac{1}{2}-\epsilon}_{p,p}([0,T])\otimes_{\alpha_p}\ldots \otimes_{\alpha_p}B^{-\tfrac{1}{2}-\epsilon}_{p,p}([0,T]),
\]
which is defined as the closure of the algebraic tensor product with respect to the so-called $p$-nuclear tensor norm. We will explain these identifications later in this paper.\\
If one component is viewed as time, then a white noise is also sometimes called space-time white noise. In this case, it can also be insightful the split space and time in the description of the smoothness. This way, we obtain that a Gaussian space-time white noise has smoothness $-\tfrac{1}{2}$ in time and $-\frac{n-1}{2}-\epsilon$ in (the $n-1$-dimensional) space. More precisely, we have the follwing result:
\begin{theorem}\label{Thm:Partially_Mixed}
 Let $1<p,\tilde{p}<\infty$ and $\epsilon>0$. Then an $n$-dimensional Gaussian white noise on $\R^n$ has a modification $\eta$ such that
 \[
  \mathbb{P}\left(\eta\in B^{-1/2}_{\tilde{p},\infty}\big([0,T];B^{-\frac{n-1}{2}-\epsilon}_{p,p}(\R^{n-1},\langle\cdot\rangle^{1-n-\epsilon})\big)\right)=1.
 \]
 Here, $\langle\xi\rangle^{1-n-\epsilon}:=(1+|\xi|^2)^{\frac{1-n-\epsilon}{2}}$ is a weight function. The intervall $[0,T]$ corresponds to the time direction, while $\R^{n-1}$ corresponds to the space direction.
\end{theorem}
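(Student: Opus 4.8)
The plan is to combine the Littlewood--Paley descriptions of the two Besov scales with the second moment structure of white noise, and then to exploit the boundedness of the time interval in order to reach the \emph{sharp} smoothness $-1/2$ and summability index $\infty$ in time. Write $Y:=B^{-\frac{n-1}{2}-\epsilon}_{p,p}(\R^{n-1},\langle\cdot\rangle^{1-n-\epsilon})$ and note that the weight $w(x):=\langle x\rangle^{1-n-\epsilon}$ is integrable over $\R^{n-1}$ precisely because $1-n-\epsilon<-(n-1)$. Fix smooth dyadic resolutions of unity, one in the time frequency and one in the spatial frequency $\xi\in\R^{n-1}$, with associated frequency projections $S^{(t)}_j$ onto $|\tau|\sim2^j$ and $S^{(x)}_k$ onto $|\xi|\sim2^k$; then, for distributions on $[0,T]\times\R^{n-1}$, $\|f\|_{B^{-1/2}_{\tilde p,\infty}([0,T];Y)}\sim\sup_{j\ge0}2^{-j/2}\|S^{(t)}_jf\|_{L^{\tilde p}([0,T];Y)}$ and $\|g\|_Y\sim\bigl(\sum_{k\ge0}2^{-k(\frac{n-1}{2}+\epsilon)p}\|S^{(x)}_kg\|_{L^p(\R^{n-1},w)}^p\bigr)^{1/p}$. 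For each $(j,k)$ the field $\eta_{j,k}:=S^{(t)}_jS^{(x)}_k\eta$ is the convolution of white noise with a Schwartz kernel, hence admits a smooth stationary Gaussian version; set $\eta_j:=S^{(t)}_j\eta=\sum_k\eta_{j,k}$, define the modification $\eta$ as $\sum_j\eta_j$ in $\Schw'$, and verify afterwards that, paired against test functions, it agrees in law with the given white noise.

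The computational core is a second moment estimate. Since $\eta$ is white noise, $\E|\eta_{j,k}(t,x)|^2=\|\varphi_j\|_{L^2(\R)}^2\,\|\psi_k\|_{L^2(\R^{n-1})}^2\sim2^{j}2^{k(n-1)}$, independently of $(t,x)$, where $\varphi_j,\psi_k$ denote the chosen multipliers; by Gaussianity $\E|\eta_{j,k}(t,x)|^q\sim_q(2^{j}2^{k(n-1)})^{q/2}$ for every $q$, so Fubini yields
\[
 \E\bigl\|\eta_j(t,\cdot)\bigr\|_Y^p=\sum_{k\ge0}2^{-k(\frac{n-1}{2}+\epsilon)p}\int_{\R^{n-1}}\E|\eta_{j,k}(t,x)|^p\,w(x)\,dx\lesssim\|w\|_{L^1}\,2^{jp/2}\sum_{k\ge0}2^{-k\epsilon p}\lesssim2^{jp/2}.
\]
Both features highlighted in the statement enter precisely here: $\|w\|_{L^1(\R^{n-1})}<\infty$ makes the $x$--integral finite, and $\epsilon>0$ makes the $k$--series converge. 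Since $\eta_j(t,\cdot)$ is a $Y$--valued Gaussian random variable, Kahane--Khintchine upgrades this to $\E\|\eta_j(t,\cdot)\|_Y^{\tilde p}\lesssim2^{j\tilde p/2}$, and integrating over $t\in[0,T]$ gives $\E\bigl[(2^{-j/2}\|\eta_j\|_{L^{\tilde p}([0,T];Y)})^{\tilde p}\bigr]\lesssim T$, uniformly in $j$. This already pins the time smoothness to exactly $-1/2$: no positive gain is possible since $2^{-j/2}\|\eta_j\|_{L^{\tilde p}([0,T];Y)}$ is of unit size on average.

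It remains to pass from this uniform bound in expectation to almost sure finiteness of the supremum over $j$, and it is here that the boundedness of $[0,T]$ becomes essential. The field $t\mapsto\eta_j(t,\cdot)$ decorrelates on scale $2^{-j}$, so, writing $2^{-j\tilde p/2}\|\eta_j\|_{L^{\tilde p}([0,T];Y)}^{\tilde p}=\int_0^T Z_j(t)\,dt$ with $Z_j(t):=2^{-j\tilde p/2}\|\eta_j(t,\cdot)\|_Y^{\tilde p}$, the stationary field $Z_j$ has mean $\sim1$ and covariances $|\op{Cov}(Z_j(s),Z_j(t))|\le F(2^j|s-t|)$ for some fixed rapidly decreasing $F$; this follows from the off-diagonal decay of $\E[\eta_{j,k}(s,x)\eta_{j,k}(t,y)]=\mathcal F^{-1}\bigl[|\varphi_j|^2|\psi_k|^2\bigr](s-t,x-y)$ together with Gaussian moment bounds for the nonlinear functional $\|\cdot\|_Y^{\tilde p}$ (Wick calculus when $\tilde p$ is an even integer, a Gaussian interpolation/Poincar\'e estimate in general). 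Consequently $\op{Var}\bigl(2^{-j\tilde p/2}\|\eta_j\|_{L^{\tilde p}([0,T];Y)}^{\tilde p}\bigr)\lesssim T\,2^{-j}$, and since $\sum_j2^{-j}<\infty$, Chebyshev together with Borel--Cantelli forces $2^{-j\tilde p/2}\|\eta_j\|_{L^{\tilde p}([0,T];Y)}^{\tilde p}$ to converge almost surely to a constant multiple of $T$; in particular $\sup_{j\ge0}2^{-j/2}\|S^{(t)}_j\eta\|_{L^{\tilde p}([0,T];Y)}<\infty$ almost surely, which by the Littlewood--Paley characterization is the assertion. Equivalently, expanding white noise against a tensor product of a spline wavelet basis adapted to $[0,T]$ with a wavelet basis of weighted $L^2(\R^{n-1})$ reduces the whole argument to the law of large numbers $2^{-j}\sum_l|g_{j,l}|^{\tilde p}\to\E|g|^{\tilde p}$ over the $\sim2^j$ indices $l$ at scale $j$, for i.i.d.\ standard Gaussians $g_{j,l}$, which is exactly what forces the time smoothness to be $-1/2$ and the summability index to be $\infty$.

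I expect the covariance/decorrelation estimate of the last paragraph to be the main obstacle: it is the only step going beyond a one-line second moment computation and a convergent geometric series, and it is precisely what separates the sharp statement (smoothness $-1/2$, summability $\infty$ in time) from the soft bound $\eta\in B^{-1/2-\delta}_{\tilde p,\tilde p}([0,T];Y)$ that the expectation estimate delivers for free. The low-frequency contributions ($j=0$ or $k=0$), the measurability of $t\mapsto\eta_j(t,\cdot)$ as a $Y$--valued function, and the verification that the glued field is a genuine modification of the original white noise are routine and handled separately.
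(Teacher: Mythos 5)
Your route is genuinely different from the paper's. The paper never touches a Littlewood--Paley decomposition in time: it shows that $t\mapsto\langle\eta,\mathbbm{1}_{(0,t]}\otimes\cdot\,\rangle$ is a Brownian motion with values in $Y=B^{-\frac{n-1}{2}-\epsilon}_{p,p}(\R^{n-1},\langle\cdot\rangle^{1-n-\epsilon})$ (Proposition \ref{Prop:WhiteNoiseIndicatorTensor}, Theorem \ref{Thm:White_Noise_Regularity}, Lemma \ref{Prelim:Lemma:BesovValuedProcess}, Theorem \ref{Prelim:Lemma:ContinuityProbability}, Corollary \ref{Prelim:Corollary:GaussianSpaceTimeRegularity}), quotes the Hyt\"onen--Veraar theorem that $E$-valued Brownian paths lie in $B^{1/2}_{\tilde p,\infty}([0,T];E)$ (Theorem \ref{Thm:Brownian_Paths}), and then recovers $\eta$ as the distributional time derivative $\partial_t\eta_{(0,t]}$ (Proposition \ref{Prelim:Prop:WhiteNoiseHDI}), which shifts the time smoothness from $+1/2$ to $-1/2$ while keeping $q=\infty$. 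Your direct approach, if completed, would be self-contained and would avoid the somewhat delicate measurability and continuity-in-probability arguments needed to establish that $\eta_{(0,t]}$ really is a $Y$-valued L\'evy process; the paper's approach buys the sharp endpoint for free from the literature.

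The gap is exactly where you locate it, but it is larger than a technicality: the covariance bound $|\op{Cov}(Z_j(s),Z_j(t))|\le F(2^j|s-t|)$ for the nonlinear functional $Z_j(t)=2^{-j\tilde p/2}\|\eta_j(t,\cdot)\|_Y^{\tilde p}$ is asserted, not proved, and it is the only step that produces the endpoint $(t,q)=(-\tfrac12,\infty)$; everything you actually establish (the second-moment computation, Kahane--Khintchine, Fubini) yields only the uniform expectation bound, hence $\eta\in B^{-1/2-\delta}_{\tilde p,\tilde p}([0,T];Y)$ for $\delta>0$, which is strictly weaker than the claim. Making the decorrelation rigorous requires a Gaussian interpolation or Wick-expansion argument for a non-smooth, infinite-dimensional functional (one must also check $\op{Var}(Z_j(0))=O(1)$ uniformly in $j$, which uses the effective independence of the $\sim2^{k(n-1)}$ spatial cells under the integrable weight), and this is essentially a re-proof of the concentration mechanism inside Hyt\"onen--Veraar's theorem --- your closing wavelet/law-of-large-numbers remark is precisely their argument. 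Two secondary points also need attention: (i) bounding $\sup_j2^{-j/2}\|S^{(t)}_j\eta\|_{L^{\tilde p}([0,T];Y)}$ for the globally defined noise does not by itself bound the restriction norm $\|\eta\|_{B^{-1/2}_{\tilde p,\infty}([0,T];Y)}$, which is an infimum of global norms over extensions; one needs a local-means characterization or a cutoff-plus-paraproduct argument, since $S^{(t)}_j$ does not commute with multiplication by a cutoff; (ii) $Y$ carries the weight $\langle\cdot\rangle^{1-n-\epsilon}$ globally in space, so the a.s.\ membership of each slice $\eta_j(t,\cdot)$ in $Y$ (needed before invoking Kahane--Khintchine for $Y$-valued Gaussians) should be justified, e.g.\ by the global regularity results you are implicitly using.
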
\noindent
Note that compared to Theorem \ref{Thm:Completely_Mixed} we can include growth bounds in space this time.  Theorem \ref{Thm:Partially_Mixed} can be useful if one studies parabolic partial differential equations driven by noise. We will illustrate this by deriving regularity results for the heat equation with Dirichlet and Neumann boundary noise. The main tool in previous works such as \cite{Alos_Bonaccorsi_2002, Brzezniak_et_al_2015, DaPrato_Zabczyk_1993, Schnaubelt_Veraar_2011} for analyzing solutions of equations with boundary noise were power weights. These weights measure the distance to the boundary and are well suited to describe the singularities of solutions at the boundary. Our approach however adds more flexibility to the description of these singularities, as it allows one to treat regularity in time, tangential and normal directions separately. It will also enable us to analyze the behavior of solutions at the boundary in spaces of higher regularity.\\

This paper is structured as follows:
\begin{itemize}
 \item In Section \ref{Section:Preliminaries} we introduce weighted Besov spaces with dominating mixed smoothness, L\'evy white noise and vector-valued L\'evy processes and cite the most important results we need throughout the paper. While most of the results are well-known, it seems like the description of the dual spaces of Besov spaces with dominating mixed smoothness on the domain $[0,T]^n$ given in Proposition \ref{Prop:DualBesovDomain} has not been available in the literature before.
 \item Section \ref{Section:Smoothness_Properties} is the main part of this paper. Therein, we derive regularity results for L\'evy white noise in spaces with dominating mixed smoothness.
 \item As an application of some of our results, we derive new regularity properties of the solutions of Poisson and heat equation with Dirichlet and Neumann boundary noise in Section \ref{Section:BoundaryNoise}.
\end{itemize}

\subsection{Notations and Assumptions}
We write $\N=\{1,2,3,\ldots\}$ for the natural numbers starting from $1$ and $\N_0=\{0,1,2,\ldots\}$ for the natural numbers starting from $0$. Throughout the paper we take $n\in\N$ and write
\[
 \R^n_+:=\{x=(x_1,\ldots,x_n)\in\R^n: x_n>0\}.
\]
If $n=1$ we also just write $\R_+:=\R^1_+$. Given a real number $x\in\R$, we write
\[
 x_+:=[x]_+:=\max\{0,x\}.
\]
 The Bessel potential will be denoted by
\[
    \langle x\rangle:=(1+|x|^2)^{1/2}\quad(x\in\R^n).
\]
Given a Banach space $E$ we will write $E'$ for its topological dual. By $\mathscr{D}(\R^n;E)$, $\mathscr{S}(\R^n;E)$ and $\mathscr{S}'(\R^n;E)$ we denote the spaces of $E$-valued test functions, $E$-valued Schwartz functions and $E$-valued tempered distributions, respectively. If $E\in\{\R,\C\}$ then we will omit it in the notation. On $\mathscr{S}(\R^n;E)$ we define the Fourier transform
\[
 (\mathscr{F}f)(\xi):=\frac{1}{(2\pi)^{n/2}}\int_{\R^n} e^{-ix\xi} f(x)\,dx\quad(f\in \mathscr{S}(\R^n;E)).
\]
As usual, we extend it to $\mathscr{S}'(\R^n;E)$ by $[\mathscr{F}u](f):=u(\mathscr{F}f)$ for $u\in \mathscr{S}'(\R^n;E)$ and $f\in \mathscr{S}(\R^n)$. Given two topological spaces $X,Y$, we write $X\hookrightarrow Y$ if there is a canonical continuous embedding. We write $X\stackrel{d}{\hookrightarrow} Y$ if the range of this embedding is dense in $Y$. If $E_0$ and $E_1$ are two locally convex spaces, then the spaces of continuous linear operators from $E_0$ to $E_1$ will be denoted by $\mathcal{B}(E_0,E_1)$. If $E_0=E_1$, then we also write $\mathcal{B}(E_0)$.\\
Throughout the paper, we will assume that $(\Omega,\mathcal{F},\mathbb{P})$ is a complete probability space.

%%%%%%%%%%%%%%%%%%%%%%%%%%%%%%%%%%%%%%%%%%%%%%%%%%%%%%%%%%%%%%%%%%%%%%%%%%%%%%%%
%%%%%%%%%%%%%%%%%%%%%%----------New Section----------%%%%%%%%%%%%%%%%%%%%%%%%%%%
%%%%%%%%%%%%%%%%%%%%%%%%%%%%%%%%%%%%%%%%%%%%%%%%%%%%%%%%%%%%%%%%%%%%%%%%%%%%%%%%

\section{Preliminaries}\label{Section:Preliminaries}
\subsection{Weights} A weight $w$ on $\R^n$ is a function $w\colon\R^n\to[0,\infty]$ which takes values in $(0,\infty)$ almost everywhere with respect to the Lebesgue measure. There are several interesting classes of weights one can consider.
\begin{definition} Let $w\colon\R^n\to[0,\infty]$ be a weight.
 \begin{enumerate}[(a)]
  \item We say that $w$ is an admissible weight if $w\in C^{\infty}(\R^n;(0,\infty))$ with the following properties:
    \begin{enumerate}[(i)]
        \item For all $\alpha\in\N_0^n$ there is a constant $C_{\alpha}$ such that
            \begin{align}\label{Eq:AdmissibleWeight1}
             |D^{\alpha}w(x)|\leq C_{\alpha} w(x)\quad\text{for all }x\in\R^n.
            \end{align}
        \item There are two constants $C>0$ and $s\geq0$ such that
            \begin{align}\label{Eq:AdmissibleWeight2}
             0<w(x)\leq C w(y)\langle x-y\rangle^s\quad\text{for all }x,y\in\R^n.
            \end{align}
    \end{enumerate}
    We write $W(\R^n)$ for the set of all admissible weights on $\R^n$.
  \item Let $1<p<\infty$. Then $w$ is called $A_p$ weight if
    \[
     [w]_{A_p}=\sup_{Q\text{ cube in }\R^n}\left(\frac{1}{\operatorname{Leb}_n(Q)}\int_Q w(x)\,dx\right)\left(\frac{1}{\operatorname{Leb}_n(Q)}\int_Q w(x)^{-\frac{1}{p-1}}\,dx\right)^{p-1}<\infty.
    \]
    The set of all $A_p$ weights on $\R^n$ will be denoted by $A_p(\R^n)$. Moreover, we write $A_{\infty}(\R^n):=\bigcup_{1<p<\infty} A_p(\R^n)$. Such weights are also called Muckenhoupt weights.
  \item Let $1<p<\infty$. Then $w$ is called $A_p^{loc}$ weight if
    \[
     [w]_{A_p^{loc}}=\sup_{Q\text{ cube in }\R^n,\,\operatorname{Leb}_n(Q)\leq1}\left(\frac{1}{\operatorname{Leb}_n(Q)}\int_Q w(x)\,dx\right)\left(\frac{1}{\operatorname{Leb}_n(Q)}\int_Q w(x)^{-\frac{1}{p-1}}\,dx\right)^{p-1}<\infty.
    \]
    The set of all $A_p^{loc}$ weights on $\R^n$ will be denoted by $A_p^{loc}(\R^n)$. Moreover, we write $A_{\infty}^{loc}(\R^n):=\bigcup_{1<p<\infty} A_p^{loc}(\R^n)$. Such weights are also called local Muckenhoupt weights.
 \end{enumerate}
\end{definition}

\begin{remark}
	The class of local Muckenhoupt weights $A_{\infty}^{loc}(\R^n)$ was introduced in \cite{Rychkov_2001} with the aim of unifying Littlewood-Paley theories for function spaces with admissible weights and Muckenhoupt weights. Accordingly, we have that $W(\R^n)\cup A_{\infty}(\R^n)\subset A_{\infty}^{loc}(\R^n)$. 
\end{remark}

\begin{example}\label{Example:Weight}
 In this paper, we are mainly work with weights of the form
\[
 \langle\,\cdot\,\rangle^{\rho}\colon\R^n\to\R,\,\xi\mapsto(1+|\xi|^2)^{\rho/2}
\]
for some $\rho\in\R$. It will be important for us to which class of weights this function belongs for different choices of $\rho\in\R$.
\begin{enumerate}[(a)]
 \item For all $\rho\in\R$ we have that $\langle\,\cdot\,\rangle^{\rho}\in W(\R^n)$, i.e. $\langle\,\cdot\,\rangle^{\rho}$ is an admissible weight. This can either be computed directly or one can use the following abstract arguments which in turn are based on simple direct computations:\\
 For \eqref{Eq:AdmissibleWeight1} one can recall that $\langle\,\cdot\,\rangle^{\rho}$ is the standard example of a so-called H\"ormander symbol of order $\rho$, see for example \cite[Chapter 2, \S1, Example 2$°$]{KumanoGo_1981}. Thus, we even have 
 \[
  |D^{\alpha} \langle\xi\rangle^{\rho}|\leq C_{\alpha,\rho} \langle\xi\rangle^{\rho-|\alpha|}
 \]
which trivially implies \eqref{Eq:AdmissibleWeight1}. In \eqref{Eq:AdmissibleWeight2} one can take $C=2^{|\rho|}$ and $s=|\rho|$ by Peetre's inequality, see for example \cite[Proposition 3.3.31]{Ruzhansky_Turunen_2010}.
\item\label{Example:A_p} It holds that $\langle\,\cdot\,\rangle^{\rho}\in A_p(\R^n)$ if and only if $-n<\rho<(p-1)n$. Again, one can directly verify this for example by a similar computation as in \cite[Example 9.1.7]{Grafakos_2009}. We also refer to \cite[Example 1.3]{Haroske_2008} where this has been observed for the equivalent weight
\[
 w_{0,\rho}(\xi):=\begin{cases}
                   1&\text{if }|\xi|\leq1,\\
                   |\xi|^{\rho}&\text{if }|\xi|\geq1.
                  \end{cases}
\]
\item It follows directly from part \eqref{Example:A_p} that $\langle\,\cdot\,\rangle^{\rho}\in A_{\infty}(\R^n)$ if and only if $-n<\rho$.
\end{enumerate}
\end{example}

\begin{definition}\label{Def:LebesgueBochnerSpaces}
 Let $E$ be a Banach space, $w\colon\R^n\to [0,\infty]$ a weight and $1\leq p<\infty$. Then the weighted Lebesgue-Bochner space $L_p(\R^n,w;E)$ is defined as the space of all strongly measurable functions $f\colon\R^n\to E$ such that
 \[
  \| f \|_{L_p(\R^n,w;E)}:=\left(\int_{\R^n} \|f(x)\|_{E}^p w(x)\,dx\right)^{1/p}<\infty
 \]
with the usual modification for $p=\infty$. As usual, functions which coincide on sets of measure $0$ are considered as equal.
\end{definition}

\begin{remark}
 For this work it is important to note that there are different conventions in the literature concerning the definition of weighted Lebesgue-Bochner spaces. Oftentimes, the expression $\| f \|_{L_p(\R^n,w;E)}$ is defined by $\| wf \|_{L_p(\R^n;E)}$, whereas in our case it is defined by $\| w^{1/p}f \|_{L_p(\R^n;E)}$. Unfortunately, we will have to refer to some articles which use the one and to other articles which use the other convention. Thus, we will explicitly mention if a certain reference does not use the convention of Definition \ref{Def:LebesgueBochnerSpaces}.
\end{remark}

\subsection{Weighted Function Spaces with Dominating Mixed Smoothness} \hspace{0mm} As general references for the theory of spaces with dominating mixed smoothness we would like to mention \cite{Schmeisser_2007,Triebel_2019,Vybiral_2006}. These spaces are mainly used in approximation theory. They can also be used to study boundary value problems with rough boundary data, see \cite{Hummel_2020}. Our aim here is to derive sharper regularity results for the sample paths of white noise.\\
In this section, let $l\in\N$ and $\mathpzc{d}=(\mathpzc{d}_1,\ldots,\mathpzc{d}_l)\in \N^l$ with $\mathpzc{d}_1+\ldots +\mathpzc{d}_l=n$. We write $\R^n_{\mathpzc{d}}$ if we split $\R^n$ according to $\mathpzc{d}$, i.e.
    \[
     \R^n_{\mathpzc{d}}:=\R^{\mathpzc{d}_1}\times\ldots\times \R^{\mathpzc{d}_l}.
    \]
Moreover, if we have such a splitting then for $x\in\R^n_{\mathpzc{d}}$ we write $x=(x_{1,\mathpzc{d}},\ldots,x_{l,\mathpzc{d}})$ with $x_{j,\mathpzc{d}}\in\R^{\mathpzc{d}_j}$, $j=1,\ldots,l$.
    \begin{definition}
    \begin{enumerate}[(a)]
     \item Let $\varphi_0\in\mathscr{D}(\R^n)$ be a smooth function with compact support such that $0\leq \varphi_0\leq 1$,
        \[
         \varphi_0(\xi)=1\quad\text{if }|\xi|\leq 1,\qquad\varphi_0(\xi)=0\quad\text{if }|\xi|\geq 3/2.
        \]
    For $\xi\in\R^n$ and $k\in\N$ let further
    \begin{align*}
     \varphi(\xi)&:=\varphi_0(\xi)-\varphi_{0}(2\xi),\\
     \varphi_k(\xi)&:=\varphi(2^{-k}\xi).
    \end{align*}
    We call such a sequence $(\varphi_k)_{k\in\N_0}$ smooth dyadic resolution of unity and write $\Phi(\R^n)$ for the space of all such sequences.
    \item Let $E$ be a Banach space. To a smooth dyadic resolution of unity $(\varphi_k)_{k\in\N_0}\in\Phi(\R^n)$ we associate the sequence of operators $(S_k)_{k\in\N_0}$ on the space of tempered distributions $\mathscr{S}'(\R^n;E)$ by means of
    \[
     S_kf:=\mathscr{F}^{-1}\varphi_k\mathscr{F} f\quad(f\in\mathscr{S}'(\R^n;E)).
    \]
    The sequence $(S_kf)_{k\in\N_0}$ is called dyadic decomposition of $f$.
    \item For $j\in\{1,\ldots,l\}$ let $(\varphi^{(j)}_{k_j})_{k_j\in\N_0}\in \Phi(\R^{\mathpzc{d}_j})$ be a smooth dyadic resolution of unity on $\R^{\mathpzc{d}_j}$. Then we define
    \[
     \varphi_{\bar{k}}:=\bigotimes_{j=1}^l \varphi^{j}_{k_j},\quad S_{\bar{k}}=\mathscr{F}^{-1}\varphi_{\bar{k}}\mathscr{F}\quad(\bar{k}=(k_1,\ldots,k_l)\in\N_0^l).
    \]
    We write $\Phi(\R^n_{\mathpzc{d}})$ for all such $(\varphi_{\bar{k}})_{\bar{k}\in\N_0^l}$.
    \end{enumerate}
    \end{definition}
    
    \begin{definition}
     Let $w\colon\R^n\to [0,\infty]$ be a weight, $E$ a Banach space, $(\varphi_{\bar{k}})_{\bar{k}\in\N_0^l}\in \Phi(\R^n_{\mathpzc{d}})$, $\bar{s}=(s_1,\ldots,s_l)\in \R^l$ and $p,q\in[1,\infty]$.
     \begin{enumerate}[(a)]
      \item We define the Besov space with dominating mixed smoothness $S^{\bar{s}}_{p,q}B(\R^n_{\mathpzc{d}},w;E)$ as the space of all tempered distributions $f\in\mathscr{S}'(\R^n_{\mathpzc{d}};E)$ such that
     \[
      \|f\|_{S^{\bar{s}}_{p,q}B(\R^n_{\mathpzc{d}},w;E)}:=\bigg(\sum_{\bar{k}\in\N_0^l}2^{q\bar{s}\cdot\bar{k}}\|S_{\bar{k}} f\|^q_{L_p(\R^n_{\mathpzc{d}},w;E)}\bigg)^{1/q}<\infty
     \]
    with the usual modification for $q=\infty$. 
    \item The respective space on some domain $\mathcal{O}_{\mathpzc{d}}\subset\R^n_{\mathpzc{d}}$ is defined by restriction:
    \[
     S^{\bar{s}}_{p,q}B(\mathcal{O}_{\mathpzc{d}},w;E):=\{ f\vert_{\mathcal{O}_{\mathpzc{d}}}:f\in S^{\bar{s}}_{p,q}B(\R^n_{\mathpzc{d}},w;E)\}
    \]
    and
    \[
     \| f \|_{S^{\bar{s}}_{p,q}B(\mathcal{O}_{\mathpzc{d}},w;E)}:=\inf_{g\in S^{\bar{s}}_{p,q}B(\R^n_{\mathpzc{d}},w;E),\,g\vert_{\mathcal{O}_{\mathpzc{d}}}=f}\| g\|_{S^{\bar{s}}_{p,q}B(\R^n_{\mathpzc{d}},w;E)}.
    \]
    \item The space $S^{\bar{s}}_{p,q,0}B(\mathcal{O}_{\mathpzc{d}},w;E)$ is defined as the closure of the space $\mathscr{S}_0(\mathcal{O})$ of Schwartz functions with support in $\overline{\mathcal{O}_{\mathpzc{d}}}$ in the space $S^{\bar{s}}_{p,q}B(\R^n_{\mathpzc{d}},w;E)$.
     \end{enumerate}
    \end{definition}
    
    \begin{remark} \label{Remark:Besov_Dominating_Mixed}
     \begin{enumerate}[(a)]
      \item If $l=1$, then we obtain the usual definition of isotropic weighted vector-valued Besov spaces. In this case, following the usual convention we write $B^s_{p,q}$ and $B^s_{p,q,0}$ instead of $S^s_{p,q}B$ and $S^s_{p,q,0}B$, respectively.
      \item It is intentional that in the definition of $S^s_{p,q,0}B(\mathcal{O}_{\mathpzc{d}},w;E)$ we take the closure in the space $S^s_{p,q}B(\R^n_{\mathpzc{d}},w;E)$ and not in $S^s_{p,q}B(\mathcal{O}_{\mathpzc{d}},w;E)$. Even in the isotropic case there is a subtle difference between the two definitions for $s-\frac{1}{p}\in\N_0$ . We refer for example to \cite[Section 4.3.2]{Triebel_1978}, where this is carefully discussed for isotropic spaces. Therein, the spaces $\tilde{B}^s_{p,q}$ correspond to the definition with the closure in $B^s_{p,q}(\R^n,w;E)$, while $\mathring{B}^s_{p,q}$ corresponds to the definition with the closure in $B^s_{p,q}(\mathcal{O},w;E)$.
      \item \label{Remark:Besov_Dominating_Mixed_Tensor}There are special representations if $p=q<\infty$. For example, it was shown in \cite{Sickel_Ullrich_2009} that for $l=n$ we have the tensor product representation
      \[
       S^{\bar{s}}_{p,p}B(\R^n_{\mathpzc{d}})\cong B^{s_1}_{p,p}(\R)\otimes_{\alpha_p} S^{(s_2,\ldots,s_n)}_{p,p}B(\R^{n-1}_{(\mathpzc{d}_2,\ldots,\mathpzc{d}_n)}) \cong B^{s_1}_{p,p}(\R)\otimes_{\alpha_p}\ldots\otimes_{\alpha_p} B^{s_n}_{p,p}(\R),
      \]
    where the tensor product is the closure of the unique tensor product on tempered distributions in the sense of \cite[Lemma B.3]{Sickel_Ullrich_2009} with respect to the $p$-nuclear tensor norm $\alpha_p$, see \cite[Appendix B]{Sickel_Ullrich_2009}. For two Banach spaces $E_1, E_2$ the $p$-nuclear tensor norm is defined by
    \[
     \alpha_p(h,E_1,E_2):=\inf\left\{\left(\sum_{j=1}^N \| x_j\|_{E_1}^p\right)^{1/p}\cdot\sup\bigg\{\left(\sum_{j=1}^N|\lambda_j(y_j)|^{p'}\right)^{1/p'}:\lambda_j\in E_2', \|\lambda_j\|_{E_2'}=1\bigg\}\right\},
    \]
    where $p'$ denotes the conjugated H\"older index and where the infimum is taken over all representations $h=\sum_{j=1}^N x_j\otimes y_j$ for $N\in\N$, $x_{1},\ldots,x_N\in E_1$ and $y_{1},\ldots,y_N\in E_2$.
    \item In a certain parameter range one can also view a Besov space with dominating mixed smoothness as a Besov space with values in another Besov space. Since it seems like this has not been formulated in the literature so far, we make this more precise in the following.
     \end{enumerate}
    \end{remark}

\begin{theorem}\label{Thm:Kernel_Theorem}
 Let $E$ be a reflexive Banach space and $l=2$. Then there are unique isomorphisms
 \begin{align*}
    I_1\colon \mathscr{S}'(\R^n;E)\to \mathcal{B}(\mathscr{S}(\R^{\mathpzc{d}_1}_{x_{1,\mathpzc{d}}}),\mathscr{S}'(\R^{\mathpzc{d}_2}_{x_{2,\mathpzc{d}}};E)),\\
    I_2\colon \mathscr{S}'(\R^n;E)\to \mathcal{B}(\mathscr{S}(\R^{\mathpzc{d}_2}_{x_{2,\mathpzc{d}}}),\mathscr{S}'(\R^{\mathpzc{d}_1}_{x_{1,\mathpzc{d}}};E))
 \end{align*}
such that for all $u\in\mathscr{S'}(\R^n;E)$ and all $\varphi_1\in\mathscr{S}(\R^{\mathpzc{d}_1}_{x_{1,\mathpzc{d}}}),\varphi_2\in\mathscr{S}(\R^{\mathpzc{d}_1}_{x_{2,\mathpzc{d}}})$ it holds that
\[
 [[I_1(u)](\varphi_1)](\varphi_2)=u(\varphi_1\otimes \varphi_2)=[[I_2(u)](\varphi_2)](\varphi_1).
\]
\end{theorem}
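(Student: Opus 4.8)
The plan is to construct the two maps $I_1, I_2$ concretely via a natural "currying" operation on tempered distributions, then verify that they are well-defined, bijective, and continuous using the Schwartz kernel theorem as the scalar-valued backbone. I will focus on $I_1$; the construction of $I_2$ is symmetric.

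First I would fix notation: write $\R^n = \R^{\mathpzc{d}_1}_{x_{1,\mathpzc{d}}} \times \R^{\mathpzc{d}_2}_{x_{2,\mathpzc{d}}}$ and recall that the algebraic tensor product $\mathscr{S}(\R^{\mathpzc{d}_1}) \otimes \mathscr{S}(\R^{\mathpzc{d}_2})$ is sequentially dense in $\mathscr{S}(\R^n)$ (this is standard and is the tool behind the uniqueness in \cite[Lemma B.3]{Sickel_Ullrich_2009} already cited). Given $u \in \mathscr{S}'(\R^n;E)$ and $\varphi_1 \in \mathscr{S}(\R^{\mathpzc{d}_1})$, define $[I_1(u)](\varphi_1) \in \mathscr{S}'(\R^{\mathpzc{d}_2};E)$ by $\psi \mapsto u(\varphi_1 \otimes \psi)$; since $\psi \mapsto \varphi_1 \otimes \psi$ is continuous from $\mathscr{S}(\R^{\mathpzc{d}_2})$ into $\mathscr{S}(\R^n)$, this is an $E$-valued tempered distribution, and by bilinearity and continuity of $u$ the assignment $\varphi_1 \mapsto [I_1(u)](\varphi_1)$ is linear and continuous, i.e.\ lies in $\mathcal{B}(\mathscr{S}(\R^{\mathpzc{d}_1}),\mathscr{S}'(\R^{\mathpzc{d}_2};E))$. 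The identity $[[I_1(u)](\varphi_1)](\varphi_2) = u(\varphi_1 \otimes \varphi_2)$ holds by construction, and uniqueness of any map satisfying this identity is immediate from the density of elementary tensors.

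It remains to show $I_1$ is a bijection. Injectivity again follows from density: if $I_1(u) = 0$ then $u$ vanishes on all $\varphi_1 \otimes \varphi_2$, hence on $\mathscr{S}(\R^n)$. For surjectivity, take $T \in \mathcal{B}(\mathscr{S}(\R^{\mathpzc{d}_1}),\mathscr{S}'(\R^{\mathpzc{d}_2};E))$; I would define a candidate $u$ on elementary tensors by $u(\varphi_1 \otimes \varphi_2) := [T(\varphi_1)](\varphi_2)$ and extend by linearity to $\mathscr{S}(\R^{\mathpzc{d}_1}) \otimes \mathscr{S}(\R^{\mathpzc{d}_2})$. The key analytic point is that this linear functional is continuous for the $\mathscr{S}(\R^n)$-topology, so that it extends (uniquely) to $u \in \mathscr{S}'(\R^n;E)$. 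Here is where reflexivity of $E$ enters: to estimate $\|[T(\varphi_1)](\varphi_2)\|_E$ one uses that $E' $ is norming for $E$, tests against $\lambda \in E'$, and applies the scalar Schwartz kernel theorem to the family of scalar distributions $(x_{1,\mathpzc{d}},x_{2,\mathpzc{d}}) \mapsto \lambda([T(\delta_{x_{1,\mathpzc{d}}})] (\delta_{x_{2,\mathpzc{d}}}))$ — more precisely, one checks a uniform seminorm bound via a closed-graph / uniform boundedness argument: for each fixed $\lambda\in E'$ with $\|\lambda\|_{E'}\le 1$, $\varphi\mapsto \lambda(u(\varphi))$ is a scalar functional on $\mathscr{S}(\R^{\mathpzc d_1})\otimes\mathscr{S}(\R^{\mathpzc d_2})$ separately continuous in each factor, hence (Schwartz kernel theorem) extends to $\mathscr{S}'(\R^n)$ with a seminorm bound; reflexivity lets one conclude a bound uniform in $\lambda$ and thereby recover an $E$-valued (not merely $E''$-valued) tempered distribution. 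Continuity of $I_1$ and of its inverse then follows from the open mapping / closed graph theorem for the relevant (F)- and (LF)-type spaces, or can be read off the explicit seminorm estimates.

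The main obstacle is precisely this surjectivity step in the vector-valued setting: upgrading the scalar kernel theorem to $E$-valued distributions. The subtlety is that naive tensoring gives a bilinear map into $\mathscr{S}'(\R^n;E)''$-type objects, and one must use reflexivity of $E$ to land back in $\mathscr{S}'(\R^n;E)$ itself — without reflexivity the statement can genuinely fail, which is why the hypothesis is there. I would handle this by the $\lambda$-testing argument sketched above, combined with the Banach–Steinhaus theorem to make the resulting bound uniform over the unit ball of $E'$; everything else (the two defining identities, uniqueness, injectivity, continuity) is formal and follows from density of elementary tensors and the closed graph theorem.
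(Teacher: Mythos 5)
The paper itself disposes of this statement in one line, by citing the vector-valued kernel theorem \cite[Appendix, Theorem 1.8.9]{Amann_2019}, so you are supplying a proof where the author supplies a reference. Your overall architecture is the right one: define $I_1(u)$ by currying, get the defining identity by construction, and get uniqueness and injectivity from the sequential density of $\mathscr{S}(\R^{\mathpzc{d}_1})\otimes\mathscr{S}(\R^{\mathpzc{d}_2})$ in $\mathscr{S}(\R^n)$. Those parts are correct and formal, as you say.

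The one genuine gap is in the surjectivity step, exactly where you locate the difficulty. You propose to obtain a seminorm bound uniform over $\{\lambda\in E':\|\lambda\|_{E'}\le 1\}$ by applying Banach--Steinhaus to the family of scalar distributions $u_\lambda$ produced by the scalar kernel theorem. But Banach--Steinhaus requires pointwise boundedness of $\{u_\lambda\}$ at \emph{every} $\varphi\in\mathscr{S}(\R^n)$, whereas your construction only gives the bound $|u_\lambda(\varphi_1\otimes\varphi_2)|\le\|[T(\varphi_1)](\varphi_2)\|_E$ on the dense subspace of finite sums of elementary tensors; pointwise boundedness on a dense subspace does not propagate to the whole space for a family of functionals that is not yet known to be equicontinuous, so the argument as written is circular. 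This is repairable: either show that $\lambda\mapsto u_\lambda$ is a continuous (closed-graph) map from $E'$ into $\mathscr{S}'(\R^n)$ with the strong topology and then use that bounded sets in the dual of a barrelled space are equicontinuous, or --- more cleanly --- avoid the $\lambda$-testing altogether. The bilinear map $(\varphi_1,\varphi_2)\mapsto[T(\varphi_1)](\varphi_2)$ is separately continuous on a product of Fr\'echet spaces with values in the Banach space $E$, hence jointly continuous, hence satisfies $\|[T(\varphi_1)](\varphi_2)\|_E\le C\,p(\varphi_1)q(\varphi_2)$ for continuous seminorms $p,q$; it therefore factors through the completed projective tensor product, and the nuclearity-based identification $\mathscr{S}(\R^{\mathpzc{d}_1})\widehat{\otimes}_\pi\mathscr{S}(\R^{\mathpzc{d}_2})\cong\mathscr{S}(\R^n)$ yields the desired $u\in\mathcal{B}(\mathscr{S}(\R^n),E)=\mathscr{S}'(\R^n;E)$ directly, with values in $E$ by completeness. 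In particular the worry about landing in an $E''$-valued object does not arise on this route, and you should be cautious about the claim that the statement ``genuinely fails'' without reflexivity: with the paper's definition of $\mathscr{S}'(\R^n;E)$ as continuous $E$-valued functionals, reflexivity is not visibly used in the bijection itself (it matters for the duality and topological identifications elsewhere in \cite{Amann_2019}). Finally, the expression $\lambda([T(\delta_{x_{1,\mathpzc{d}}})](\delta_{x_{2,\mathpzc{d}}}))$ is not meaningful as written, since $T$ is only defined on Schwartz functions; this is cosmetic, but it should be replaced by the bilinear form on test functions.
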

\begin{proof}
    This is one of the kernel theorems from \cite[Appendix, Theorem 1.8.9]{Amann_2019}.
\end{proof}

\begin{proposition}\label{Prop:DominatingMixedSmoothnessFubiniBesov}
 Let $E$ be a Banach space, $\bar{s}=(s_1,s_2)\in\R^2$ and let $w_j\colon\R^{\mathpzc{d_j}}\to[0,\infty]$ $(j=1,2)$ be weights. Suppose that $w=w_1\otimes w_2$ and that $1< p<\infty$. The mappings $I_1,I_2$ from Theorem \ref{Thm:Kernel_Theorem} yield the following isomorphies:
\begin{align*}
 B^{s_1}_{pp}\big(\R^{\mathpzc{d}_1}_{x_{1,\mathpzc{d}}},w_1;B^{s_2}_{pp}(\R^{\mathpzc{d}_2}_{x_{2,\mathpzc{d}}},w_2;E)\big)\stackrel{I_1}{\cong}S^{\bar{s}}_{p,p}B(\R^n_{\mathpzc{d}},w,E)\stackrel{I_2}{\cong}B^{s_2}_{pp}\big(\R^{\mathpzc{d}_2}_{x_{2,\mathpzc{d}}},w_2;B^{s_1}_{pp}(\R^{\mathpzc{d}_1}_{x_{1,\mathpzc{d}}},w_1;E)\big).
\end{align*}
\end{proposition}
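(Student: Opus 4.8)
The plan is to compute the three norms via Littlewood--Paley decompositions and to recognise the double series over $\N_0^2$ defining the dominating mixed norm as, after transport by $I_1$ or $I_2$, literally the iterated series defining the corresponding iterated Besov norm; the interchange of series and integrals is harmless because with $q=p$ all occurring $\ell^q$ and $L^p$ norms carry one and the same exponent, so Tonelli applies to the nonnegative summands. Concretely, I would fix smooth dyadic resolutions of unity $(\varphi^{(1)}_{k_1})_{k_1\in\N_0}\in\Phi(\R^{\mathpzc{d}_1})$ and $(\varphi^{(2)}_{k_2})_{k_2\in\N_0}\in\Phi(\R^{\mathpzc{d}_2})$, set $\varphi_{\bar{k}}:=\varphi^{(1)}_{k_1}\otimes\varphi^{(2)}_{k_2}$ as in the definition of $\Phi(\R^n_{\mathpzc{d}})$, and use these compatible resolutions to compute all three norms; since each space involved has a norm independent of the chosen resolution up to equivalence, establishing the identity below for this one choice yields the isomorphism in general (and in fact an isometry for the compatible choice). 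Writing $S^{(1)}_{k_1}$, $S^{(2)}_{k_2}$ for the partial Littlewood--Paley operators acting in the $\R^{\mathpzc{d}_1}$ and $\R^{\mathpzc{d}_2}$ variables respectively, one has $S_{\bar{k}}=S^{(1)}_{k_1}S^{(2)}_{k_2}=S^{(2)}_{k_2}S^{(1)}_{k_1}$: they are Fourier multipliers in disjoint groups of variables, hence commute, and by construction the symbol of the composition is $\varphi_{\bar{k}}$.

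The second ingredient is the compatibility of these partial operators with the kernel--theorem identification of Theorem~\ref{Thm:Kernel_Theorem}. Testing against tensor products shows at once that $I_1$ intertwines the $\R^{\mathpzc{d}_1}$-Fourier transform, so $S^{(1)}_{k_1}\bigl(I_1(f)\bigr)=I_1\bigl(S^{(1)}_{k_1}f\bigr)$ for every $f\in\mathscr{S}'(\R^n;E)$, where on the left $S^{(1)}_{k_1}$ acts on the $\mathscr{S}'(\R^{\mathpzc{d}_2};E)$-valued tempered distribution $I_1(f)$ on $\R^{\mathpzc{d}_1}$. Applying $S^{(2)}_{k_2}$ to the value at $x_{1,\mathpzc{d}}$ then gives $S^{(2)}_{k_2}\bigl([S^{(1)}_{k_1}I_1(f)](x_{1,\mathpzc{d}})\bigr)=(S_{\bar{k}}f)(x_{1,\mathpzc{d}},\,\cdot\,)$, which is a genuine function because $\varphi_{\bar{k}}$ has compact support, so $S_{\bar{k}}f$ is a smooth, polynomially bounded $E$-valued function. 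Consequently each slice $x_{1,\mathpzc{d}}\mapsto[S^{(1)}_{k_1}I_1(f)](x_{1,\mathpzc{d}})$ is, whenever the relevant norm is finite, a strongly measurable $B^{s_2}_{pp}(\R^{\mathpzc{d}_2},w_2;E)$-valued function, and both sides of the identity below are $+\infty$ simultaneously otherwise.

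Then comes the bookkeeping. Since $w=w_1\otimes w_2$, Fubini for Bochner spaces gives $L_p(\R^n_{\mathpzc{d}},w;E)\cong L_p(\R^{\mathpzc{d}_1},w_1;L_p(\R^{\mathpzc{d}_2},w_2;E))$ isometrically, and unfolding the definitions yields
\begin{align*}
 \|I_1(f)\|_{B^{s_1}_{pp}(\R^{\mathpzc{d}_1},w_1;B^{s_2}_{pp}(\R^{\mathpzc{d}_2},w_2;E))}^p
 &=\sum_{k_1\in\N_0}2^{ps_1k_1}\int_{\R^{\mathpzc{d}_1}}\bigl\|[S^{(1)}_{k_1}I_1(f)](x_{1,\mathpzc{d}})\bigr\|_{B^{s_2}_{pp}(\R^{\mathpzc{d}_2},w_2;E)}^p\,w_1(x_{1,\mathpzc{d}})\,dx_{1,\mathpzc{d}}\\
 &=\sum_{k_1\in\N_0}2^{ps_1k_1}\int_{\R^{\mathpzc{d}_1}}\sum_{k_2\in\N_0}2^{ps_2k_2}\bigl\|S^{(2)}_{k_2}[S^{(1)}_{k_1}I_1(f)](x_{1,\mathpzc{d}})\bigr\|_{L_p(\R^{\mathpzc{d}_2},w_2;E)}^p\,w_1(x_{1,\mathpzc{d}})\,dx_{1,\mathpzc{d}}\\
 &=\sum_{\bar{k}\in\N_0^2}2^{p\bar{s}\cdot\bar{k}}\int_{\R^n_{\mathpzc{d}}}\bigl\|(S_{\bar{k}}f)(x)\bigr\|_E^p\,w(x)\,dx=\|f\|_{S^{\bar{s}}_{p,p}B(\R^n_{\mathpzc{d}},w;E)}^p,
\end{align*}
where the last step uses the commutation identity $S^{(2)}_{k_2}S^{(1)}_{k_1}f=S_{\bar{k}}f$, Tonelli to merge the $k_2$-series with the $x_{1,\mathpzc{d}}$- and $x_{2,\mathpzc{d}}$-integrations, and $w(x)=w_1(x_{1,\mathpzc{d}})w_2(x_{2,\mathpzc{d}})$. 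Thus $I_1$ and its inverse map the two spaces isometrically into one another; since Theorem~\ref{Thm:Kernel_Theorem} already provides $I_1$ as a bijection on the ambient distribution spaces, it restricts to the claimed isomorphism. Interchanging the roles of $\mathpzc{d}_1$ and $\mathpzc{d}_2$ gives the statement for $I_2$, and $I_2\circ I_1^{-1}$ identifies the two iterated Besov spaces.

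The part that actually needs care --- and the main obstacle --- is the passage between a genuinely distributional $f$ and the ``slice'' description used above: one must verify that $I_1$ intertwines the $\R^{\mathpzc{d}_1}$-Fourier transform, that $S_{\bar{k}}f$ is a well-behaved function owing to its compact spectrum, and that the slices are Bochner-measurable into the inner Besov space. I expect the cleanest route is to prove the norm identity first for $f\in\mathscr{S}(\R^n;E)$, where every step is classical and Tonelli is transparent, and then to extend to the full spaces by density of $\mathscr{S}$ (here $1<p<\infty$ enters) together with lower semicontinuity of all three norms under convergence in $\mathscr{S}'$. Everything else --- independence of the resolution of unity, the Fubini identity $L_p(w)\cong L_p(w_1;L_p(w_2))$, and commutation of Fourier multipliers in disjoint variables --- is routine.
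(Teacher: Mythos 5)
Your proposal is correct and follows essentially the same route as the paper's own proof: the paper likewise invokes the kernel theorem (Theorem \ref{Thm:Kernel_Theorem}) and then computes $\|f\|_{S^{\bar{s}}_{p,p}B(\R^n_{\mathpzc{d}},w;E)}^p$ as a double sum/integral, splits $S_{\bar{k}}=S_{k_2}S_{k_1}$, and regroups by Tonelli into the iterated Besov norm. You merely make explicit several points the paper leaves implicit (commutation and intertwining of the partial Littlewood--Paley operators with $I_1$, strong measurability of the slices, and the density/lower-semicontinuity extension), which is a welcome but not substantively different elaboration.
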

\begin{proof}
The assertion follows from Theorem \ref{Thm:Kernel_Theorem} and 
{\allowdisplaybreaks{
 \begin{align*}
  \|f\|_{S^{\bar{s}}_{p,p}B(\R^n_{\mathpzc{d}},w;E)}^p&=\sum_{\vec{k}\in\N_0^2}2^{p\bar{s}\cdot \vec{k}}\int_{\R^{\mathpzc{d_1}}}\int_{\R^{\mathpzc{d_2}}} \|S_{\vec{k}}f(x)\|_E^pw_2(x_{2,\mathpzc{d}})\,dx_{2,\mathpzc{d}}\,w_1(x_{1,\mathpzc{d}})\,dx_{1,\mathpzc{d}}\\
  &=\sum_{k_1\in\N_0}2^{ps_1k_1}\int_{\R^{\mathpzc{d_1}}}\sum_{k_2\in\N_0}2^{ps_2k_2}\int_{\R^{\mathpzc{d_2}}} \|S_{k_2}S_{k_1}f(x)\|_E^pw_2(x_{2,\mathpzc{d}})\,dx_{2,\mathpzc{d}}\,w_1(x_{2,\mathpzc{d}})\,dx_{1,\mathpzc{d}}\\
  &=\sum_{k_1\in\N_0}2^{ps_1k_1}\int_{\R^{\mathpzc{d_1}}}\|S_{k_1}f(x_{1,\mathpzc{d}},\,\cdot\,)\|^p_{B^{s_2}_{pp}(\R^{\mathpzc{d}_2},w_2;E)}w_1(x_{1,\mathpzc{d}})\,dx_{1,\mathpzc{d}}\\
  &=\|f\|_{B^{s_1}_{pp}(\R^{\mathpzc{d}_1}_{x_{1,\mathpzc{d}}},w_1;B^{s_2}_{pp}(\R^{\mathpzc{d}_2}_{x_{2,\mathpzc{d}}},w_2;E))}^p.
 \end{align*}}}
\end{proof}

\begin{remark}
 \begin{enumerate}[(a)]
  \item In Theorem \ref{Thm:Kernel_Theorem} and Proposition \ref{Prop:DominatingMixedSmoothnessFubiniBesov} we took $l=2$ only for notational convenience. The same arguments also work for $l\in\{3,\ldots,n\}$.
  \item In this work, we frequently use the representation in Proposition \ref{Prop:DominatingMixedSmoothnessFubiniBesov} of Besov spaces with dominating mixed smoothness. In the following, we omit the isomorphisms $I_1$ and $I_2$ in the notation and consider the spaces in Proposition \ref{Prop:DominatingMixedSmoothnessFubiniBesov} as equal.
 \end{enumerate}
\end{remark}

\begin{corollary}\label{Cor:Isomorphies_Dominating_Mixed_Domain}
 Let $T>0$, $l=n$, $\bar{s}=(s_1,\ldots,s_n)\in\R^n$ and $p\in[1,\infty)$. Then we have the isomorphisms
 \begin{align*}
  B^{s_1}_{p,p}([0,T];&B^{s_2}_{p,p}([0,T];\ldots B^{s_l}_{p,p}([0,T])\ldots))\cong S^{\bar{s}}_{p,p}B([0,T]^{n})\\
  &\cong B^{s_1}_{p,p}([0,T])\otimes_{\alpha_p}\ldots\otimes_{\alpha_p} B^{s_n}_{p,p}([0,T]).
 \end{align*}
\end{corollary}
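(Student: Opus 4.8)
The plan is to deduce Corollary \ref{Cor:Isomorphies_Dominating_Mixed_Domain} from the results already available on $\R^n$, namely Proposition \ref{Prop:DominatingMixedSmoothnessFubiniBesov} (iterated in $l=n$ steps), the tensor product representation recalled in Remark \ref{Remark:Besov_Dominating_Mixed}\eqref{Remark:Besov_Dominating_Mixed_Tensor}, and then transfer everything to the cube $[0,T]^n$ by restriction. The first and cleanest isomorphism is the iterated Besov space representation: applying Proposition \ref{Prop:DominatingMixedSmoothnessFubiniBesov} with the splitting $\mathpzc{d}=(1,n-1)$, trivial weights $w_1=w_2=1$, $E=\C$, and then inductively applying it again to the second factor $S^{(s_2,\ldots,s_n)}_{p,p}B(\R^{n-1})$ (with $E$ taken to be the appropriate iterated Besov space, which is a Banach space), yields
\[
 S^{\bar{s}}_{p,p}B(\R^n_{\mathpzc{d}})\cong B^{s_1}_{p,p}\big(\R;B^{s_2}_{p,p}(\R;\ldots B^{s_n}_{p,p}(\R)\ldots)\big).
\]
For the tensor product side one simply invokes Remark \ref{Remark:Besov_Dominating_Mixed}\eqref{Remark:Besov_Dominating_Mixed_Tensor} directly. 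So on $\R^n$ all three spaces agree, and the remaining work is to pass to the domain $[0,T]^n$.

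Next I would handle the restriction step. By definition $S^{\bar{s}}_{p,p}B([0,T]^n)$ is the quotient $S^{\bar{s}}_{p,p}B(\R^n_{\mathpzc{d}})/\{g : g|_{[0,T]^n}=0\}$ equipped with the quotient norm, and likewise the iterated Besov space $B^{s_1}_{p,p}([0,T];B^{s_2}_{p,p}([0,T];\ldots))$ is defined as an iterated restriction/quotient. The key point is that restriction commutes with the isomorphisms $I_1,I_2$ of Theorem \ref{Thm:Kernel_Theorem}: if $g\in\mathscr{S}'(\R^n;E)$ vanishes on $[0,T]^n$, then for $x_{1,\mathpzc{d}}\in[0,T]$ the slice $[I_1(g)](\,\cdot\,)$ evaluated against test functions supported away from the relevant set behaves compatibly, so that $I_1$ descends to an isomorphism between the two quotients. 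Concretely, one shows that the restriction operator $\mathrm{r}_{[0,T]^n}$ and the iterated restriction $\mathrm{r}_{[0,T]}\circ\cdots\circ\mathrm{r}_{[0,T]}$ (acting in successive variables, through the kernel-theorem identification) have the same kernel and the same image, and that the quotient norms match because the infimum defining the norm on $[0,T]^n$ can be computed by first taking infima over extensions in the innermost variable and then proceeding outward — this is exactly the nesting of infima, and it works because on $\R^n$ the norms coincide isometrically by Proposition \ref{Prop:DominatingMixedSmoothnessFubiniBesov}. Thus the first isomorphism in the Corollary follows.

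For the tensor product representation on $[0,T]^n$, I would argue that restriction is compatible with the $\alpha_p$-tensor norm. The subtlety is that $\otimes_{\alpha_p}$ in Remark \ref{Remark:Besov_Dominating_Mixed}\eqref{Remark:Besov_Dominating_Mixed_Tensor} is the closure of the algebraic tensor product inside the ambient distribution space. One uses that the restriction map $S^{s_j}_{p,p}B(\R)\to B^{s_j}_{p,p}([0,T])$ is a metric surjection (quotient map) for each $j$, and that for reasonable tensor norms — in particular for $\alpha_p$, which is defined by an explicit formula involving $\ell_p$-sums of norms of the factors — the tensor product of metric surjections is again a metric surjection onto the completed tensor product of the quotients; combined with the fact that $[0,T]^n$-restriction of $\R^n$-distributions is the $n$-fold tensor product of the one-variable restrictions, one gets
\[
 S^{\bar{s}}_{p,p}B([0,T]^n)\cong B^{s_1}_{p,p}([0,T])\otimes_{\alpha_p}\ldots\otimes_{\alpha_p}B^{s_n}_{p,p}([0,T]).
\]
Alternatively, and perhaps more cleanly, one simply notes that by the already-established first isomorphism $S^{\bar s}_{p,p}B([0,T]^n)$ is the iterated Besov space on $[0,T]$, and then applies the tensor-product description of iterated Besov spaces on an interval (which is the $l=n$ case of the Sickel–Ullrich result of Remark \ref{Remark:Besov_Dominating_Mixed}\eqref{Remark:Besov_Dominating_Mixed_Tensor}, transferred to $[0,T]$ by the same restriction argument in one variable at a time).

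The main obstacle I anticipate is the bookkeeping in the restriction step: verifying carefully that the quotient norm on $S^{\bar s}_{p,p}B([0,T]^n)$ really equals the nested quotient norm of the iterated Besov space, i.e. that "extend-then-restrict" can be performed one variable at a time without loss. On $\R^n$ this is an isometry by Proposition \ref{Prop:DominatingMixedSmoothnessFubiniBesov}, so the issue is purely about interchanging infima over extension operators acting in different groups of variables; this is where one must be slightly careful, using that an optimal (or near-optimal) extension in the inner variables, applied pointwise in the outer variables, is measurable and its outer norm is controlled — essentially a Fubini-type argument at the level of the defining sums and integrals. The tensor-norm compatibility is a standard fact about $\alpha_p$ but also requires a short verification that the $\alpha_p$-norm of a tensor of quotient maps is the quotient $\alpha_p$-norm, for which one exhibits near-optimal representations.
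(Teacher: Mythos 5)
Your proposal is correct and follows essentially the same route as the paper: the full-space identifications come from Proposition \ref{Prop:DominatingMixedSmoothnessFubiniBesov} and Remark \ref{Remark:Besov_Dominating_Mixed}~(\ref{Remark:Besov_Dominating_Mixed_Tensor}), and the passage to $[0,T]^n$ is made by composing these isomorphisms with restriction and extension. The one refinement worth making concerns your ``interchange of infima'' step: rather than assembling pointwise near-optimal extensions in the inner variables (whose measurability and norm control in the outer variables is precisely the delicate point you identify), one should use a bounded \emph{linear} extension operator $[0,T]\to\R$ for the relevant (vector-valued) Besov scale, tensorized with the identity in the remaining variables --- this is the ``suitable extension operator'' the paper invokes, and it makes the variable-by-variable transfer of the quotient norms immediate.
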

\begin{proof}
 For $[0,T]$ being replaced by $\R$ these are the statements of Proposition~\ref{Prop:DominatingMixedSmoothnessFubiniBesov} and Remark~\ref{Remark:Besov_Dominating_Mixed}~ (\ref{Remark:Besov_Dominating_Mixed_Tensor}). Thus, the assertion follows by composing the isomorphisms with a suitable extension operator and the restriction to $[0,T]^n$.
\end{proof}

\begin{proposition}\label{Prop:DualBesov}
 Let $1<p,q<\infty$, $s\in\R$ and let $w\colon\R^n\to(0,\infty)$ be an admissible weight. Let further $p',q'\in(1,\infty)$ be the conjugated H\"older indices of $p$ and $q$, respectively. Then we have
 \[
  (B^s_{p,q}(\R^n,w))'=B^{-s}_{p',q'}(\R^n,w^{1-p'}).
 \]
\end{proposition}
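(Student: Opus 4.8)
The plan is to prove the duality via the standard retraction–coretraction (sequence space) method adapted to the weighted vector-valued setting. First I would fix a smooth dyadic resolution of unity $(\varphi_k)_{k\in\N_0}\in\Phi(\R^n)$ and recall that the map $f\mapsto (S_kf)_{k\in\N_0}$ is a coretraction from $B^s_{p,q}(\R^n,w)$ into the weighted sequence-Lebesgue space $\ell^s_q(L_p(\R^n,w))$, with a retraction given by $(f_k)_{k}\mapsto \sum_k \widetilde S_k f_k$ where $\widetilde S_k=\mathscr{F}^{-1}\widetilde\varphi_k\mathscr{F}$ for a slightly fattened resolution with $\widetilde\varphi_k\varphi_k=\varphi_k$. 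Because $w$ is an admissible weight, the Littlewood–Paley theory of Rychkov/Bui applies (admissible weights lie in $A_\infty^{loc}(\R^n)$), so these maps are bounded and the usual Fourier-multiplier (Mikhlin-type) estimates hold uniformly in $k$; this is exactly what makes $B^s_{p,q}(\R^n,w)$ a complemented subspace of the sequence space.

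Next I would identify the dual of the sequence space. A direct computation (Hölder in the integral, then Hölder in the sum) gives $\big(\ell^s_q(L_p(\R^n,w))\big)' = \ell^{-s}_{q'}(L_p(\R^n,w)')$, and for $1<p<\infty$ the weighted Lebesgue space satisfies $L_p(\R^n,w)' = L_{p'}(\R^n, w^{1-p'})$: with our convention $\|f\|_{L_p(\R^n,w)}=\|w^{1/p}f\|_{L_p}$ the pairing $\int fg\,dx$ identifies the dual with functions $g$ such that $w^{-1/p}g\in L_{p'}$, i.e. $\|g\|=\|w^{1/p'-1}\cdot w^{(1-p')/p'} \cdots\|$ — carefully, $w^{-1/p}g\in L_{p'}$ means $\int |g|^{p'}w^{-p'/p}\,dx<\infty$, and since $-p'/p=1-p'$ this is precisely $g\in L_{p'}(\R^n,w^{1-p'})$ in our convention (note $w^{1-p'}$ is again admissible by \eqref{Eq:AdmissibleWeight1}–\eqref{Eq:AdmissibleWeight2}, since $|D^\alpha w^{1-p'}|\lesssim w^{1-p'}$ follows from the Leibniz/chain rule and \eqref{Eq:AdmissibleWeight1}). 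Then by the general principle that a continuous projection induces a duality on the complemented subspace, $\big(B^s_{p,q}(\R^n,w)\big)'$ is the image under the transposed retraction of the annihilator complement, which is $B^{-s}_{p',q'}(\R^n,w^{1-p'})$ — here one uses that the same Littlewood–Paley machinery, applied with the exponents $p',q'$ and the weight $w^{1-p'}$, characterizes this target space as a complemented subspace via the dual resolution.

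The remaining point is to check that the duality bracket is the natural one: every $\Lambda\in(B^s_{p,q}(\R^n,w))'$ is represented by some $g\in B^{-s}_{p',q'}(\R^n,w^{1-p'})$ through $\Lambda(f)=\sum_k \int_{\R^n} (S_k f)(x)\,(\widetilde S_k g)(x)\,dx$, and conversely each such $g$ defines a bounded functional; density of $\mathscr{S}(\R^n)$ in $B^s_{p,q}(\R^n,w)$ (valid since $p,q<\infty$) guarantees that the representing element is unique and that the bracket extends the $L_2$ pairing on Schwartz functions. I expect the main obstacle to be purely bookkeeping with the two conventions for weighted spaces: one must track which convention ($\|wf\|_{L_p}$ versus $\|w^{1/p}f\|_{L_p}$) is used in the cited Littlewood–Paley references (Rychkov, and e.g.\ \cite{Haroske_2008}) and translate the exponent of the dual weight accordingly, so that the final answer genuinely comes out as $w^{1-p'}$ in \emph{our} normalization rather than $w^{-p'}$ or $w^{1/(1-p)}$; everything else is a routine assembly of known facts about complemented subspaces and weighted sequence spaces.
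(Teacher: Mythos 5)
Your argument is correct in outline, but it takes a different route from the paper: the paper does not prove this duality at all, it simply cites \cite[Chapter 5.1.2]{Schmeisser_Triebel_1987} and confines itself to translating between the two weight conventions (there $B^s_{p,q}(\R^n,w)$ means what we would write as $B^s_{p,q}(\R^n,w^p)$, which is exactly why the dual weight comes out as $w^{1-p'}$ rather than $w^{-p'}$). What you have written is essentially the proof that underlies that reference: the retraction--coretraction onto $\ell^s_q(L_p(\R^n,w))$, boundedness of the Peetre-type maximal/multiplier estimates uniformly in $k$ (which for admissible weights follows already from \eqref{Eq:AdmissibleWeight2} and is classical in \cite{Schmeisser_Triebel_1987}; the $A_\infty^{loc}$ machinery of Rychkov is more than is needed here), the elementary dualities $(\ell_q(E))'=\ell_{q'}(E')$ and $(L_p(\R^n,w))'=L_{p'}(\R^n,w^{1-p'})$ under the pairing $\int fg\,dx$, and the identification of the dual of a complemented subspace via the pairing $\sum_k\int S_kf\cdot \widetilde S_k g\,dx$ together with density of $\mathscr{S}(\R^n)$ for $p,q<\infty$. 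Your computation $-p'/p=1-p'$ and the observation that $w^{1-p'}$ is again admissible are exactly the bookkeeping points that matter. The only step I would urge you to write out carefully is the last one: showing that the transported dual space is $B^{-s}_{p',q'}(\R^n,w^{1-p'})$ \emph{with the canonical $\mathscr{S}'$--$\mathscr{S}$ pairing}, not merely isomorphic to it, since the paper later uses this duality to test distributions against Schwartz functions (Lemma \ref{Lemma:BesovBorelInCylindrical}); your closing paragraph indicates the right argument, so this is a matter of detail rather than a gap. In short: the paper buys brevity by citation, your version buys a self-contained proof at the cost of assembling the weighted Littlewood--Paley input.
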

\begin{proof}
 This result is taken from \cite[Chapter 5.1.2]{Schmeisser_Triebel_1987}. Note however that therein, a different convention concerning the notation of weighted spaces is used. The space $B^s_{p,q}(\R^n,w)$ in the notation of \cite{Schmeisser_Triebel_1987} corresponds to $B^s_{p,q}(\R^n,w^p)$ in our notation. Note also that the weights being considered in \cite{Schmeisser_Triebel_1987} are even much more general than the admissible weights we consider here.
\end{proof}

\begin{lemma}\label{Lemma:Iterated_Besov_Density}
 Let  $1<p<\infty$, $l=n$ and $\bar{s}=(s_1,\ldots,s_n)\in\R^n$. Then we have that $\mathscr{S}_0([0,T]^n)$ is dense in $B^{s_1}_{p,p,0}([0,T];B^{s_2}_{p,p,0}([0,T];\ldots B^{s_n}_{p,p,0}([0,T])\ldots))$. 
\end{lemma}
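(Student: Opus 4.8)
The plan is to induct on $n$ (equivalently on $l=n$), peeling off the outermost variable at each step. The base case $n=1$ is just the definition: $B^{s_1}_{p,p,0}([0,T])$ is by construction the closure of $\mathscr{S}_0([0,T])$ in $B^{s_1}_{p,p}(\R)$.

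For the inductive step, write $F:=B^{s_2}_{p,p,0}([0,T];\ldots B^{s_n}_{p,p,0}([0,T])\ldots)$, so that the space in question is $B^{s_1}_{p,p,0}([0,T];F)$, and assume that $\mathscr{S}_0([0,T]^{n-1})$ is dense in $F$. First I would reduce the problem to elementary tensors. By definition $B^{s_1}_{p,p,0}([0,T];F)$ is the closure of $\mathscr{S}_0([0,T];F)$ in $B^{s_1}_{p,p}(\R;F)$, and the algebraic tensor product $\mathscr{S}_0([0,T])\otimes F$ is dense in $\mathscr{S}_0([0,T];F)$ — hence in $B^{s_1}_{p,p,0}([0,T];F)$ — since finite-rank approximations converge in the natural $C^\infty$-topology of $\mathscr{S}_0([0,T];F)$ (e.g. because $\mathscr{S}_0([0,T])$ is nuclear), and on functions supported in the fixed compact set $[0,T]$ that topology is finer than the one induced by $\|\cdot\|_{B^{s_1}_{p,p}(\R;F)}$ (via the routine bound $\|f\|_{B^{s_1}_{p,p}(\R;F)}\lesssim_{T,s_1,p}\|f\|_{C^N([0,T];F)}$ for $N$ large enough, which follows from the vanishing moments of the Littlewood--Paley kernels together with their rapid decay). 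It therefore suffices to prove that every elementary tensor $\phi\otimes e$ with $\phi\in\mathscr{S}_0([0,T])$ and $e\in F$ lies in the closure of $\mathscr{S}_0([0,T]^n)$ inside $B^{s_1}_{p,p,0}([0,T];F)$.

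To this end I would use the induction hypothesis to pick $e_j\in\mathscr{S}_0([0,T]^{n-1})$ with $e_j\to e$ in $F$. For each $j$ the function $(x_1,\ldots,x_n)\mapsto\phi(x_1)\,e_j(x_2,\ldots,x_n)$ is smooth on $\R^n$ with support in $[0,T]\times[0,T]^{n-1}=[0,T]^n$, hence lies in $\mathscr{S}_0([0,T]^n)$; read as an $F$-valued function of $x_1$, namely $x_1\mapsto\phi(x_1)\,e_j$, it is exactly the elementary tensor $\phi\otimes e_j\in\mathscr{S}_0([0,T])\otimes F$. Choosing the dyadic resolution of unity on $\R^n$ as a tensor product of resolutions on the one-dimensional factors, the Littlewood--Paley operator in the $x_1$-variable satisfies $S_{k_1}(\phi\otimes v)=(S_{k_1}\phi)\otimes v$ for every constant $v\in F$, whence $\|\phi\otimes v\|_{B^{s_1}_{p,p}(\R;F)}=\|\phi\|_{B^{s_1}_{p,p}(\R)}\,\|v\|_F$. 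Applying this with $v=e_j-e$ gives $\|\phi\otimes e_j-\phi\otimes e\|_{B^{s_1}_{p,p}(\R;F)}=\|\phi\|_{B^{s_1}_{p,p}(\R)}\,\|e_j-e\|_F\to0$, so $\phi\otimes e$ is a limit in $B^{s_1}_{p,p,0}([0,T];F)$ of elements of $\mathscr{S}_0([0,T]^n)$. This closes the induction.

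All steps are elementary; the part needing the most care is bookkeeping rather than analysis, namely checking that the embedding $\mathscr{S}_0([0,T]^n)\hookrightarrow B^{s_1}_{p,p,0}([0,T];F)$ implicit in the statement is the ``currying'' embedding (so that the products $\phi(x_1)e_j(x_2,\ldots,x_n)$ are genuine elements of $\mathscr{S}_0([0,T]^n)$), and justifying that $C^\infty$-convergence of compactly supported functions implies convergence in the iterated Besov norm, which is what makes the algebraic tensor product dense. If one instead takes the closure of the algebraic tensor products $\mathscr{S}_0([0,T])\otimes F$ as the definition of the ``$0$''-spaces from the start, the latter point becomes automatic.
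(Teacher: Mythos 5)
Your argument is correct and follows essentially the same route as the paper's: both reduce to the density of the algebraic tensor product $\mathscr{S}_0\otimes F$ in the vector-valued Schwartz-type space (the paper cites Amann for exactly this) and then invoke the definition of the $0$-spaces as closures, iterating over the variables. The only cosmetic difference is that you peel off the outermost variable by induction and make the elementary-tensor identity $\|\phi\otimes v\|_{B^{s_1}_{p,p}(\R;F)}=\|\phi\|_{B^{s_1}_{p,p}(\R)}\,\|v\|_F$ explicit, whereas the paper records a chain of dense embeddings starting from the innermost factor.
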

\begin{proof}
 Let $E$ be a Banach space. It holds that the algebraic tensor product $\mathscr{S}_0([0,T]^{n-1})\otimes B^{s_n}_{p,p,0}([0,T];E))$ is dense in $\mathscr{S}_0([0,T]^{n-1};B^{s_n}_{p,p,0}([0,T];E))$, see for example \cite[Theorem 1.3.6]{Amann_2003}. On the other hand, $\mathscr{S}_0([0,T];E)$ is by definition dense in $B^{s_n}_{p,p,0}([0,T];E)$.  Thus, we have the dense embeddings
 \begin{align*}
  \mathscr{S}_0([0,T]^{n-1})\otimes \mathscr{S}_0([0,T];E) \stackrel{d}{\hookrightarrow} \mathscr{S}_0([0,T]^{n-1})&\otimes B^{s_n}_{p,p,0}([0,T];E)) \\
  &\stackrel{d}{\hookrightarrow}\mathscr{S}_0([0,T]^{n-1};B^{s_n}_{p,p,0}([0,T];E)).
 \end{align*}
Since 
\[
 \mathscr{S}_0([0,T]^{n-1})\otimes \mathscr{S}_0([0,T];E)\subset \mathscr{S}_0([0,T]^n;E)\subset \mathscr{S}_0([0,T]^{n-1};B^{s_n}_{p,p,0}([0,T];E))
\]
we obtain that
\[
 \mathscr{S}_0([0,T]^n;E)\stackrel{d}{\hookrightarrow} \mathscr{S}_0([0,T]^{n-1};B^{s_n}_{p,p,0}([0,T];E)).
\]
Repeating the same argument for $\mathscr{S}_0([0,T]^{n-1};B^{s_n}_{p,p,0}([0,T];E))$ instead of $\mathscr{S}_0([0,T]^n;E)$ and iterating it, we obtain the assertion.
\end{proof}

\begin{corollary}\label{Cor:Isomorphies_Dominating_Mixed_Domain_0}
 Let  $1<p<\infty$, $l=n$ and $\bar{s}=(s_1,\ldots,s_n)\in\R^n$. Then we have that
 \[
  B^{s_1}_{p,p,0}([0,T];B^{s_2}_{p,p,0}([0,T];\ldots B^{s_n}_{p,p,0}([0,T])\ldots))\cong S^{\bar{s}}_{p,p,0}B([0,T]^{n})
 \]
where the isomorphism is the same as in Corollary \ref{Cor:Isomorphies_Dominating_Mixed_Domain}.
\end{corollary}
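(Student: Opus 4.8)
The plan is to realise both sides of the asserted isomorphism as completions of the \emph{same} dense subspace $\mathscr{S}_0([0,T]^n)$ and to check that the two norms it inherits agree (for compatible choices of the dyadic resolutions of unity), so that the desired isomorphism is just the continuous extension of the identity. By its very definition, $S^{\bar s}_{p,p,0}B([0,T]^n)$ is the closure of $\mathscr{S}_0([0,T]^n)$ in $S^{\bar s}_{p,p}B(\R^n)$, hence the completion of $(\mathscr{S}_0([0,T]^n),\|\cdot\|_{S^{\bar s}_{p,p}B(\R^n)})$. On the other hand, by Lemma \ref{Lemma:Iterated_Besov_Density} the space $\mathscr{S}_0([0,T]^n)$ is dense in $B^{s_1}_{p,p,0}([0,T];B^{s_2}_{p,p,0}([0,T];\ldots B^{s_n}_{p,p,0}([0,T])\ldots))$, and the latter is complete (each layer being a closed subspace of a vector-valued Besov space over a Banach space), so it too is the completion of $\mathscr{S}_0([0,T]^n)$, now with respect to the iterated norm.

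It then remains to compare the two norms on $\mathscr{S}_0([0,T]^n)$. Fix $f\in\mathscr{S}_0([0,T]^n)$. By construction each layer $B^{s_j}_{p,p,0}([0,T];E)$ of the iterated space is a closed subspace of $B^{s_j}_{p,p}(\R;E)$, hence carries the norm inherited from it; iterating this through all $n$ layers, and using that under the kernel-theorem identifications (Theorem \ref{Thm:Kernel_Theorem} and the remark after Proposition \ref{Prop:DominatingMixedSmoothnessFubiniBesov}) the distribution $f$ and its relevant Littlewood--Paley pieces $S_{k_1}\cdots S_{k_n}f=S_{\bar k}f$ are literally the same tempered distributions in all of the spaces involved, the norm of $f$ in the iterated space equals its norm in $B^{s_1}_{p,p}(\R;B^{s_2}_{p,p}(\R;\ldots B^{s_n}_{p,p}(\R)\ldots))$. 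The Fubini-type rearrangement from the proof of Proposition \ref{Prop:DominatingMixedSmoothnessFubiniBesov} (applied iteratively, as noted in Remark \ref{Remark:Besov_Dominating_Mixed}) rewrites this as $\|f\|_{S^{\bar s}_{p,p}B(\R^n)}$ for compatible resolutions of unity, and as an equivalent norm in general. Therefore the two completions are canonically isomorphic, the isomorphism being the extension by continuity of $\mathrm{id}_{\mathscr{S}_0([0,T]^n)}$. Finally, the isomorphism of Corollary \ref{Cor:Isomorphies_Dominating_Mixed_Domain} is, on distributions supported in $[0,T]^n$, nothing but this kernel-theorem identification (the extension-operator-then-restriction step does not change such a distribution once it is identified with its restriction to $[0,T]^n$), so restricting it to the $0$-subspaces on both sides yields exactly the isomorphism constructed here.

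The step I expect to require the most care is this \emph{unfolding}: one must justify at every level that passing from $B^{s_j}_{p,p,0}([0,T];E_j)$, with $E_j$ the $(j{+}1)$-st iterated space of this kind, first to $B^{s_j}_{p,p}(\R;E_j)$ and then, via the inductively established isometric inclusion of $E_j$ into the corresponding Besov space over $\R^{n-j}$, down to $B^{s_j}_{p,p}(\R;B^{s_{j+1}}_{p,p}(\R;\ldots))$, leaves the norm of $f$ unchanged. This is exactly the point where the paper's convention of defining the $0$-spaces by closure in the spaces on $\R^n$ (rather than on $[0,T]^n$) is used, and where one has to make sure that for smooth $f$ the pieces $S_{\bar k}f$ genuinely take values in the smaller spaces, so that the subspace norms may legitimately be invoked. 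Granting this bookkeeping, the statement follows at once from Lemma \ref{Lemma:Iterated_Besov_Density} and Proposition \ref{Prop:DominatingMixedSmoothnessFubiniBesov}.
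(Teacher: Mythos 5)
Your argument is correct and follows essentially the same route as the paper: both sides are realised as the closure (completion) of the common dense subspace $\mathscr{S}_0([0,T]^n)$, with Lemma \ref{Lemma:Iterated_Besov_Density} supplying density on the iterated side, the definition supplying it on the other, and Proposition \ref{Prop:DominatingMixedSmoothnessFubiniBesov} together with Corollary \ref{Cor:Isomorphies_Dominating_Mixed_Domain} identifying the two topologies on that subspace. You are merely more explicit than the paper about the norm-unfolding through the nested $0$-subspaces, a step the paper's proof leaves implicit.
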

\begin{proof}
 By iteration we define $R_n:=\mathscr{S}_0([0,T])$ and
 \[
  R_{j-1}:=\{u\in \mathscr{S}_0([0,T];S^{(s_j,\ldots,s_n)}_{p,p}B([0,T]^{n+1-j}))\;\vert\; \forall t\in[0,T]:u(t)\in R_j\}\quad(j=2,\ldots,n).
 \]
Then we have $\mathscr{S}_0([0,T]^n)\subset R_1$ so that it follows together with Lemma \ref{Lemma:Iterated_Besov_Density} that
\begin{align*}
 B^{s_1}_{p,p,0}([0,T];\ldots B^{s_n}_{p,p,0}([0,T];E)&\ldots)\subset \overline{\mathscr{S}_0([0,T]^n)}\subset \overline{R_1}\\
 & \subset B^{s_1}_{p,p,0}([0,T];\ldots B^{s_n}_{p,p,0}([0,T];E)\ldots),
\end{align*}
where the closures are taken with respect to the topology of the iterated Besov space $ B^{s_1}_{p,p,0}([0,T];\ldots B^{s_n}_{p,p,0}([0,T];E)\ldots)$. Hence, we have that
\[
 B^{s_1}_{p,p,0}([0,T];\ldots B^{s_n}_{p,p,0}([0,T];E)\ldots)=\overline{\mathscr{S}_0([0,T]^n)}
\]
On the other hand, $S^{\bar{s}}_{p,p,0}B([0,T]^{n})$ is defined as the closure of $\mathscr{S}_0([0,T]^n)$ and thus, the assertion follows.
\end{proof}

\begin{proposition}\label{Prop:DualBesovDomain}
 Let $1<p<\infty$, $p'$ the conjugated H\"older index, $l=n$ and $\bar{s}=(s_1,\ldots,s_n)\in\R^n$. Then we have that
 \[
  (S^{\bar{s}}_{p,p}B([0,T]^{n}))'\cong S^{-\bar{s}}_{p',p',0}B([0,T]^{n}),\quad S^{\bar{s}}_{p,p,0}B([0,T]^{n}))'\cong S^{-\bar{s}}_{p',p'}B([0,T]^{n}).
 \]
\end{proposition}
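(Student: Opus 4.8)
The plan is to reduce the statement on the cube $[0,T]^n$ to the corresponding duality statement on $\R^n$ via the iterated-Besov representations established in Corollaries \ref{Cor:Isomorphies_Dominating_Mixed_Domain} and \ref{Cor:Isomorphies_Dominating_Mixed_Domain_0}, together with the scalar duality result of Proposition \ref{Prop:DualBesov}. First I would rewrite both spaces appearing in the claim as iterated Besov spaces: by Corollary \ref{Cor:Isomorphies_Dominating_Mixed_Domain}, $S^{\bar s}_{p,p}B([0,T]^n)\cong B^{s_1}_{p,p}([0,T];B^{s_2}_{p,p}([0,T];\ldots;B^{s_n}_{p,p}([0,T])\ldots))$, and by Corollary \ref{Cor:Isomorphies_Dominating_Mixed_Domain_0} the same holds for the $0$-subscript version. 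So it suffices to identify the dual of an iterated Besov space on $[0,T]$ with the iterated Besov space (with $0$-subscript, and opposite sign of smoothness, conjugated indices) in the reversed sense, and vice versa.

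The core tool is the scalar duality $(B^s_{p,p}([0,T];E))'\cong B^{-s}_{p',p',0}([0,T];E')$ and $(B^s_{p,p,0}([0,T];E))'\cong B^{-s}_{p',p'}([0,T];E')$, valid whenever $E$ is reflexive with $E'$ the indicated type of Besov space; this is the $[0,T]$-version of Proposition \ref{Prop:DualBesov}, obtained by the standard extension/restriction argument (the dual of a space defined by restriction to $[0,T]$ is the subspace of the dual on $\R$ supported in $[0,T]$, which is exactly the $0$-subscript space, and dually). Granting this, I would argue by downward induction on the number of nested Besov spaces. The innermost space $B^{s_n}_{p,p}([0,T])$ has dual $B^{-s_n}_{p',p',0}([0,T])$ by the scalar statement. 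Assuming the dual of the $(n-j)$-fold iterate with the last $n-j+1$ smoothness parameters has been identified as the corresponding $0$-subscript iterate over $B^{-s_n}_{p',p'}$, I apply the scalar duality once more with $E = B^{s_{j+1}}_{p,p}([0,T];\ldots)$, whose dual is by the induction hypothesis the required iterated space; the reflexivity of $E$ needed to invoke the vector-valued duality follows because each Besov space $B^s_{p,p}$ with $1<p<\infty$ is reflexive, and reflexivity is inherited through the iteration. Peeling off one layer at a time flips the smoothness sign, replaces $p$ by $p'$, and toggles the presence of the $0$-subscript, exactly matching $S^{-\bar s}_{p',p',0}B$ and $S^{-\bar s}_{p',p'}B$ after re-applying Corollaries \ref{Cor:Isomorphies_Dominating_Mixed_Domain} and \ref{Cor:Isomorphies_Dominating_Mixed_Domain_0}.

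The main obstacle I anticipate is the bookkeeping around the $0$-subscript versus non-$0$-subscript distinction under duality, together with making sure the vector-valued scalar duality is available in the form needed. One has to be careful that in the iterated picture the dual of $B^{s_j}_{p,p}([0,T];E)$ is $B^{-s_j}_{p',p',0}([0,T];E')$ and not $B^{-s_j}_{p',p'}([0,T];E')$, and that when $E$ itself carries a $0$-subscript its dual drops it; Lemma \ref{Lemma:Iterated_Besov_Density} and Corollary \ref{Cor:Isomorphies_Dominating_Mixed_Domain_0} are what guarantee the density and identification statements that make this consistent. A secondary technical point is justifying the vector-valued version of Proposition \ref{Prop:DualBesov} on the half-open situation; this is standard for $1<p<\infty$ (the weight $w\equiv 1$ here), using that $L_p([0,T];E)' = L_{p'}([0,T];E')$ for reflexive $E$ and the retraction/coretraction description of Besov spaces, but it should be stated explicitly or cited. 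Once these ingredients are in place, the proposition follows by stringing the isomorphisms together.
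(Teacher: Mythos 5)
Your proposal matches the paper's own argument: the paper likewise uses Corollaries \ref{Cor:Isomorphies_Dominating_Mixed_Domain} and \ref{Cor:Isomorphies_Dominating_Mixed_Domain_0} to pass to iterated Besov spaces, reduces to the vector-valued one-dimensional dualities $(B^{s}_{p,p}([0,T];E))'=B^{-s}_{p',p',0}([0,T];E')$ and $(B^{s}_{p,p,0}([0,T];E))'=B^{-s}_{p',p'}([0,T];E')$ for reflexive $E$, and justifies these by the extension--restriction methods of Amann and Muramatu. Your iteration with the sign flip, index conjugation, and toggling of the $0$-subscript is exactly the peeling-off step the paper performs implicitly, so the proposal is correct and essentially identical in approach.
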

\begin{proof}
It follows from Corollary \ref{Cor:Isomorphies_Dominating_Mixed_Domain} together with Corollary \ref{Cor:Isomorphies_Dominating_Mixed_Domain_0} that we can show the assertion on the level of iterated Besov spaces. Since they are defined by iteration, it suffices to show the assertion for the usual isotropic but vector-valued Besov-spaces on $[0,T]$, i.e. it suffices to show that
\[
 (B^{s}_{p,p}([0,T];E))'=B^{-s}_{p',p',0}([0,T];E'),\quad (B^{s}_{p,p,0}([0,T];E))'=B^{-s}_{p',p'}([0,T];E'),
\]
where $E$ is a reflexive Banach space. For these relations, we refer to \cite[Chapter VII, Theorem 2.8.4]{Amann_2019} or \cite[Theorem 11]{Muramatu_1973}. Even though the former reference considers different domains and the latter treats the scalar-valued situation, their extension-restriction methods also work in our setting.
\end{proof}

\begin{proposition}\label{Prop:Weighted_Besov_Isomorphism}
 Let $s,\rho\in\R$ and $1\leq p,q<\infty$. Then the mapping
 \[
  B^{s}_{p,q}(\R^n,\langle\,\cdot\,\rangle^{\rho})\to  B^{s}_{p,q}(\R^n),\;f\mapsto \langle\,\cdot\,\rangle^{\rho/p}f
 \]
 is an isomorphism
\end{proposition}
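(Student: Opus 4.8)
The plan is to identify $\langle\,\cdot\,\rangle^{\rho/p}$ as an admissible weight and to recognize the claim as the assertion that multiplication by an admissible weight is an isomorphism between the correspondingly weighted and the unweighted Besov space -- a statement I would either quote from the literature or prove directly by an almost-orthogonality estimate.

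Concretely, write $g:=\langle\,\cdot\,\rangle^{\rho/p}$. By Example~\ref{Example:Weight}~(a), $g$ is an admissible weight, and it is moreover a H\"ormander symbol of order $\rho/p$, so $|D^\alpha g(x)|\le C_\alpha\langle x\rangle^{\rho/p-|\alpha|}\le C_\alpha g(x)$ for all $\alpha\in\N_0^n$ and, by Peetre's inequality, $g(x)\le 2^{|\rho|/p}g(y)\langle x-y\rangle^{|\rho|/p}$; the same holds for $g^{-1}=\langle\,\cdot\,\rangle^{-\rho/p}$. The map $T_g\colon f\mapsto gf$ is linear and, being multiplication by a nowhere-vanishing smooth function, a bijection of $\mathscr S'(\R^n)$ with inverse $T_{g^{-1}}$. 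Unwinding the definitions, $\|f\|_{B^s_{p,q}(\R^n,\langle\cdot\rangle^\rho)}=\big(\sum_{k}2^{qsk}\|gS_kf\|_{L_p(\R^n)}^q\big)^{1/q}$ while $\|T_gf\|_{B^s_{p,q}(\R^n)}=\big(\sum_{k}2^{qsk}\|S_k(gf)\|_{L_p(\R^n)}^q\big)^{1/q}$, so the proposition is equivalent to the norm equivalence
\[ \Big(\sum_{k\in\N_0}2^{qsk}\|S_k(gf)\|_{L_p(\R^n)}^q\Big)^{1/q}\ \asymp\ \Big(\sum_{k\in\N_0}2^{qsk}\|gS_kf\|_{L_p(\R^n)}^q\Big)^{1/q} \]
with constants independent of $f$. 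Once this is shown, $T_g$ is a bounded bijection whose inverse $T_{g^{-1}}=T_g^{-1}$ is bounded as well (apply the $\gtrsim$ half with $gf$ in place of $f$), hence an isomorphism.

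For the norm equivalence itself I see two routes. The quick one: it is exactly the classical isomorphism theorem for multiplication by admissible weights in \cite{Schmeisser_Triebel_1987}; accounting for the convention difference already recalled in the proof of Proposition~\ref{Prop:DualBesov} (their $B^s_{p,q}(\R^n,w)$ is our $B^s_{p,q}(\R^n,w^p)$), the statement there that $f\mapsto wf$ is an isomorphism of $B^s_{p,q}(\R^n,w)$ onto $B^s_{p,q}(\R^n)$ for any admissible $w$ specializes, with $w=\langle\cdot\rangle^{\rho/p}$, to precisely our claim. The self-contained route expands $f=\sum_j S_jf$, writes $S_k(gf)=\sum_j S_k(gS_jf)$, and rests on two points: since $\widehat g=\widehat{\langle\cdot\rangle^{\rho/p}}$ decays exponentially away from the origin (being, up to a power of $1-\Delta$, a modified Bessel potential kernel), the product $gS_jf$ is, modulo a remainder rapidly decaying in $2^j$, again frequency-localized to the annulus $\{|\xi|\asymp 2^j\}$, so $S_k(gS_jf)$ picks up a factor decaying faster than any power of $2^{|j-k|+\max(j,k)}$ once $|j-k|\ge 2$; and for $|j-k|\le 1$ the pointwise bounds $|D^\alpha g|\le C_\alpha g$ and the Peetre maximal function let one move $g$ past $S_k$ at the cost of a constant. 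Summing the resulting double sum against the weights $2^{sk}$ is then a Schur-type estimate, valid for every $s\in\R$ because of the rapid off-diagonal decay; running the same scheme with the roles of $j$ and $k$ interchanged gives the reverse inequality. I expect this off-diagonal estimate -- quantifying the leakage of $gS_jf$ away from frequency $2^j$ while accommodating the mild singularity of $\widehat{\langle\cdot\rangle^{\rho/p}}$ at $\xi=0$ -- to be the only genuinely nontrivial point, the rest being bookkeeping with the conventions and with the elementary properties of $\langle\cdot\rangle^{\rho/p}$.
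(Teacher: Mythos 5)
Your primary route is exactly the paper's proof: the paper also just observes that $\langle\,\cdot\,\rangle^{\rho/p}$ is an admissible weight and quotes the classical lifting isomorphism for admissible weights (citing \cite[Theorem 6.5]{Triebel_2006} rather than \cite{Schmeisser_Triebel_1987}), with the same remark about the convention $\|f\|_{L_p(w)}=\|w^{1/p}f\|_{L_p}$. Your additional self-contained almost-orthogonality sketch is a reasonable outline of how that cited result is proved, but it goes beyond what the paper does and is not needed.
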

\begin{proof}
 Recall that $\langle\,\cdot\,\rangle^{\rho/p}$ is an admissible weight. This proposition actually holds for all admissible weights, see for example \cite[Theorem 6.5]{Triebel_2006}. Note that in this reference a different convention concerning the notation of weighted spaces is used.
\end{proof}

\begin{theorem}\label{Thm:Weighted_Besov_Complex_Interpolation}
	Let $p_0,p_1,q_0,q_1\in[1,\infty)$, $s_0,s_1\in\R$ and $w_0,w_1\in A_{\infty}^{loc}(\R^n)$. Let further $\theta\in(0,1)$ and 
	\[
		s=(1-\theta)s_0+\theta s_1,\quad\frac{1}{p}=\frac{1-\theta}{p_0}+\frac{\theta}{p_1},\quad \frac{1}{q}=\frac{1-\theta}{q_0}+\frac{\theta}{q_1},\quad w=w_0^{\frac{(1-\theta)p}{p_0}}w_1^{\frac{\theta p}{p_1}}.
	\]
	Then we have that
	\[
		[B^{s_0}_{p_0,q_0}(\R^n,w_0),B^{s_1}_{p_1,q_1}(\R^n,w_1)]_{\theta}=B^{s}_{p,q}(\R^n,w),
	\]
	where $[\cdot,\cdot]_{\theta}$ denotes the complex interpolation functor. In particular, it holds that
	\[
		B^{s_0}_{p_0,q_0}(\R^n,w_0) \cap B^{s_1}_{p_1,q_1}(\R^n,w_1) \subset B^{s}_{p,q}(\R^n,w)
	\]
\end{theorem}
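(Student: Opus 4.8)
The plan is to realize each of the weighted Besov spaces as a retract of a weighted vector-valued sequence space and then to apply the classical complex interpolation theorems for such sequence spaces, using that complex interpolation commutes with the formation of retracts.

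First I would set up the retract structure. Choose a fixed $(\varphi_k)_{k\in\N_0}\in\Phi(\R^n)$ and put $\psi_k:=\varphi_{k-1}+\varphi_k+\varphi_{k+1}$ with $\varphi_{-1}:=0$, so that $\psi_k\varphi_k=\varphi_k$. For $\sigma\in\R$, $a,b\in[1,\infty)$ and a weight $v$, introduce
\[
\ell^{\sigma}_b(L_a(v)):=\Big\{(f_k)_{k\in\N_0}\subset L_a(\R^n,v):\ \Big(\sum_{k\in\N_0}2^{\sigma b k}\|f_k\|_{L_a(\R^n,v)}^b\Big)^{1/b}<\infty\Big\}.
\]
By the very definition of the Besov norm, the coretraction $J\colon f\mapsto (S_kf)_{k\in\N_0}$ is an isometry from $B^{\sigma}_{a,b}(\R^n,v)$ into $\ell^{\sigma}_b(L_a(v))$. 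As retraction I would take $P\big((f_k)_k\big):=\sum_{k}\mathscr F^{-1}\psi_k\mathscr F f_k$; then $\psi_k\varphi_k=\varphi_k$ gives $PJf=\sum_k S_kf=f$, i.e.\ $P\circ J=\id$. The one nontrivial point is that $P\colon\ell^{\sigma}_b(L_a(v))\to B^{\sigma}_{a,b}(\R^n,v)$ is bounded whenever $v\in A_\infty^{loc}(\R^n)$. This rests on two facts: by the Fourier supports one has $S_j\,\mathscr F^{-1}\psi_k\mathscr F f_k=0$ as soon as $|j-k|\ge 3$, so each dyadic block of $P((f_k)_k)$ involves only boundedly many terms; and convolution with $\mathscr F^{-1}\psi_k$ --- which up to dilation is a fixed Schwartz function --- is bounded on $L_a(\R^n,v)$ uniformly in $k$, which is exactly what Rychkov's localized maximal-function estimates provide for local Muckenhoupt weights \cite{Rychkov_2001}. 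Combining these with the triangle inequality in $\ell_b$ gives $\|P((f_k)_k)\|_{B^{\sigma}_{a,b}(v)}\le C\,\|(f_k)_k\|_{\ell^{\sigma}_b(L_a(v))}$. Hence $B^{\sigma}_{a,b}(\R^n,v)$ is a retract of $\ell^{\sigma}_b(L_a(v))$ via $(J,P)$, and, crucially, $J$ and $P$ do not depend on $\sigma$, $a$, $b$ or $v$.

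Second I would interpolate the sequence spaces. The space $\ell^{\sigma}_b(L_a(v))$ is an $\ell_b$-space twisted by the scalar weight sequence $(2^{\sigma k})_k$ and with values in the Banach space $L_a(\R^n,v)$, so Calder\'on's interpolation theorem for vector-valued $\ell_b$-spaces (Bergh--L\"ofstr\"om, \emph{Interpolation Spaces}, Sections~5.1 and 5.6) yields
\[
\big[\ell^{s_0}_{q_0}(L_{p_0}(w_0)),\ \ell^{s_1}_{q_1}(L_{p_1}(w_1))\big]_\theta=\ell^{s}_{q}\big([L_{p_0}(\R^n,w_0),L_{p_1}(\R^n,w_1)]_\theta\big),
\]
where the exponent $s$ is produced by the combination $2^{(1-\theta)s_0k}2^{\theta s_1k}=2^{sk}$ of the scalar weights and $\tfrac1q=\tfrac{1-\theta}{q_0}+\tfrac{\theta}{q_1}$. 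By the Stein--Weiss theorem for weighted Lebesgue spaces --- in the normalization $\|f\|_{L_p(w)}=\|w^{1/p}f\|_{L_p}$ used here, which is precisely the one turning the usual $w_0^{1-\theta}w_1^{\theta}$ into $w_0^{(1-\theta)p/p_0}w_1^{\theta p/p_1}$ --- one has $[L_{p_0}(\R^n,w_0),L_{p_1}(\R^n,w_1)]_\theta=L_p(\R^n,w)$ with $w$ exactly as in the statement, so the right-hand side equals $\ell^{s}_{q}(L_p(\R^n,w))$. Since complex interpolation commutes with retracts (Triebel, \emph{Interpolation Theory, Function Spaces, Differential Operators}, Section~1.2.4) and $(J,P)$ is common to both endpoints, we conclude
\[
[B^{s_0}_{p_0,q_0}(\R^n,w_0),B^{s_1}_{p_1,q_1}(\R^n,w_1)]_\theta=P\big(\ell^{s}_{q}(L_p(\R^n,w))\big)=B^{s}_{p,q}(\R^n,w)
\]
with equivalence of norms (the last identification again via $PJ=\id$ together with the isometry of $J$). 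The displayed inclusion at the end is then immediate, being the general elementary embedding $A_0\cap A_1\hookrightarrow[A_0,A_1]_\theta$ valid for any interpolation couple, since $\|a\|_{[A_0,A_1]_\theta}\le\|a\|_{A_0}^{1-\theta}\|a\|_{A_1}^{\theta}$.

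The genuinely delicate step is the uniform $L_a(\R^n,v)$-boundedness of the convolution operators underlying the retraction $P$ in the range of merely \emph{local} Muckenhoupt weights: for honest $A_p$ weights this would be classical Muckenhoupt--Littlewood--Paley theory, but for $v\in A_\infty^{loc}$ it really needs Rychkov's local maximal inequalities and the attached local Littlewood--Paley calculus. Once that ingredient is in place, the remainder is the routine retract-plus-Calder\'on/Stein--Weiss bookkeeping, together with a careful match of the weight exponents to the paper's convention for weighted Lebesgue spaces.
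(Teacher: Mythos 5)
The paper does not actually prove this theorem; it cites \cite[Theorem 4.5]{Sickel_et_al_2014}, so your retract argument is an attempt at a genuine proof rather than a reconstruction. The skeleton is the standard and correct strategy: coretraction $Jf=(S_kf)_k$, retraction $P$, Calder\'on's interpolation theorem for vector-valued $\ell^s_q$-spaces, Stein--Weiss for the weighted $L_p$ fibres, and the fact that complex interpolation commutes with a common retract pair. The bookkeeping of $s,p,q$ and of the weight $w=w_0^{(1-\theta)p/p_0}w_1^{\theta p/p_1}$ is also consistent with the paper's normalization $\|f\|_{L_p(w)}=\|w^{1/p}f\|_{L_p}$, and the final inclusion via $\|a\|_{[A_0,A_1]_\theta}\le\|a\|_{A_0}^{1-\theta}\|a\|_{A_1}^{\theta}$ is fine.

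However, the step you yourself flag as the genuinely delicate one is not merely delicate --- as you have set it up, it is false in the stated generality. A weight $v\in A_\infty^{loc}(\R^n)$ may grow exponentially: $v(x)=e^{\lambda|x|}$ belongs to $A_p^{loc}$ for every $p$ and every $\lambda>0$, since the $A_p^{loc}$ condition only tests cubes of volume at most $1$. On the other hand $\mathscr F^{-1}\psi_k$ is band-limited and therefore cannot decay exponentially: if $\int|\mathscr F^{-1}\psi_0(z)|e^{\lambda|z|}\,dz<\infty$, then $\psi_0=\mathscr F\mathscr F^{-1}\psi_0$ would extend holomorphically to a strip and, vanishing on an open subset of $\R^n$, would vanish identically. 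Hence $f\mapsto \mathscr F^{-1}\psi_0\mathscr F f$ is already unbounded on $L_1(\R^n,e^{\lambda|x|})$, so your $P$ does not map $\ell^{\sigma}_b(L_a(v))$ into $B^{\sigma}_{a,b}(\R^n,v)$ for such $v$. Rychkov's maximal inequalities concern the \emph{local} Hardy--Littlewood maximal operator and do not control this global convolution; indeed the main point of \cite{Rychkov_2001} is to replace the Fourier-side resolution of unity by kernels compactly supported in the $x$-variable (``local means''), and it is with that local-means retract pair that the argument goes through --- which is essentially how \cite{Sickel_et_al_2014} prove the cited Theorem 4.5. Your proof is correct as written for admissible weights and for global Muckenhoupt weights (which covers every weight $\langle\cdot\rangle^{\rho}$ the paper actually uses), but to obtain the theorem for all of $A_\infty^{loc}(\R^n)$ you must build $(J,P)$ from local means rather than from the band-limited multipliers $\mathscr F^{-1}\psi_k$.
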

\begin{proof}
	This is part of the statement of \cite[Theorem 4.5]{Sickel_et_al_2014}.
\end{proof}

In one proof, we also need Bessel potential spaces as a technical tool.
\begin{definition}
 Let $\bar{s}=(s_1,\ldots,s_l)\in \R^l$ and $p\in(1,\infty)$. Then we define $S^{\bar{s}}_pH(\R^n_{\mathpzc{d}})$ by
 \[
  S^{\bar{s}}_pH(\R^n_{\mathpzc{d}}):=\bigg\{f\in\mathscr{S}'(\R^n): \mathscr{F}^{-1}\prod_{j=1}^l\langle\xi_{j,\mathpzc{d}}\rangle^{s_j}\mathscr{F}f\in L_p(\R^n)\bigg\}
 \]
and endow it with the norm
\[
 \| f \|_{S^{\bar{s}}_pH(\R^n_{\mathpzc{d}})}:= \bigg\|\mathscr{F}^{-1}\prod_{j=1}^l\langle\xi_{j,\mathpzc{d}}\rangle^{s_j}\mathscr{F}f\bigg\|_{L_p(\R^n)}.
\]
If $l=1$ then we obtain the standard isotropic Bessel potential spaces and write $H^{s}_p(\R^n)$ instead.
\end{definition}

For Bessel potential spaces, we have to following embeddings: Let $\bar{s}=(s,\ldots,s)\in\R^n$ for some $s\in\R$. Then we have
\begin{align}\label{Eq:Bessel_potential_embedding1}
 H^{sn}_p(\R^n)\hookrightarrow S^{\bar{s}}_pH(\R^n)\hookrightarrow H^{s}_p(\R^n).
\end{align}
This can for example be found in \cite[(1.7)]{Schmeisser_2007} and \cite[(1.554)]{Triebel_2019}. If $p\in(1,2]$, then we have
\begin{align} \label{Eq:Bessel_potential_embedding2}
 B^{s}_{p,p}(\R^n)\hookrightarrow H^s_p(\R^n)
\end{align}
for which we refer to \cite[Theorem 6.4.4]{Bergh_Loefstroem_1976}. Moreover, for all $\epsilon>0$ we have that
\begin{align}\label{Eq:Bessel_potential_embedding3}
 H^s_p(\R^n)\hookrightarrow B^{s-\epsilon}_{p,p}(\R^n),
\end{align}
which can be obtained as a combination of \cite[Section 2.3.2, Proposition 2]{Triebel_1983} and \cite[Theorem 6.4.4]{Bergh_Loefstroem_1976}. As for Besov spaces, we have the tensor product representation
\begin{align}\label{Eq:Bessel_potential_tensor}
 S^{\bar{s}'}_pH(\R^{n-1}_{\mathpzc{d}'})\otimes_{\alpha_p}H^{s}_p(\R)\cong S^{\bar{s}}_pH(\R^{n}_{\mathpzc{d}}),
\end{align}
where $\bar{s}'=(s,\ldots,s)\in\R^{n-1}$, $\bar{s}=(s,\ldots,s)\in\R^{n}$, $\mathpzc{d}'=(1,\ldots,1)\in\N^{n-1}$ and $\mathpzc{d}=(1,\ldots,1)\in\N^{n}$. This has also been derived in \cite{Sickel_Ullrich_2009}.

\subsection{L\'evy White Noise}
Now we briefly introduce L\'evy white noise as a generalized random process and collect some of the known properties. In the following $\nu$ will be a L\'evy measure, i.e. a measure on $\R\setminus\{0\}$ such that $\int_{\R\setminus\{0\}} \min\{1, x^2\}\,d\nu(x)<\infty$. Moreover, we take $\gamma\in\R$ and $\sigma^2>0$. We call the triplet $(\gamma,\sigma^2,\nu)$ L\'evy triplet and the function
\[
 \Psi(\xi):= i\gamma\xi-\frac{\sigma^2\xi^2}{2}+\int_{\R\setminus\{0\}} (e^{ix\xi}-1-i\xi x\mathbbm{1}_{|x|\leq 1})\,d\nu(x)
\]
is called L\'evy exponent corresponding to the L\'evy triplet $(\gamma,\sigma^2,\nu)$. Functions of the form $\exp\circ\Psi$ for some L\'evy exponent $\Psi$ are exactly the characteristic functions of infinitely divisible random variables. \\
We endow the space of tempered distributions $\mathscr{S}'(\R^n)$ with the cylindrical $\sigma$-field $\mathcal{B}_c(\mathscr{S}'(\R^n))$ generated by the cylindrical sets, i.e. sets of the form
\[
 \{u\in\mathscr{S}'(\R^n): (\langle u,\varphi_1\rangle,\ldots, \langle u,\varphi_N\rangle)\in B\}
\]
for some $N\in\N$, $\varphi_1,\ldots,\varphi_N\in\mathscr{S}(\R^n)$ and some Borel set $B\in\mathcal{B}(\R^N)$. We will also consider $(\mathscr{S}'(\mathcal{O}),\mathcal{B}_c(\mathscr{S}'(\mathcal{O})))$ for certain domains $\mathcal{O}\subset \R^n$. We define this by restriction. More precisely, we write $\mathscr{S}_0(\mathcal{O})$ for the closed subspace of $\mathscr{S}(\R^n)$ which consists of functions with support in $\overline{\mathcal{O}}$. $\mathscr{S}'(\mathcal{O})$ is defined by
\[
 \mathscr{S}'(\mathcal{O}):=\{ u\vert_{\mathscr{S}_0(\mathcal{O})} : u\in \mathscr{S}'(\R^n) \},
\]
and $\mathcal{B}_c(\mathscr{S}'(\mathcal{O}))$ is the $\sigma$-field generated by sets of the form
\[
 \{u\in\mathscr{S}'(\mathcal{O}): (\langle u,\varphi_1\rangle,\ldots, \langle u,\varphi_N\rangle)\in B\}
\]
for some $N\in\N$, $\varphi_1,\ldots,\varphi_N\in\mathscr{S}_0(\mathcal{O})$ and some Borel set $B\in\mathcal{B}(\R^N)$. We also just write
\[
 u\vert_{\mathcal{O}}:=u\vert_{\mathscr{S}_0(\mathcal{O})}\quad u\in \mathscr{S}'(\R^n).
\]
This way, the mapping $u\mapsto u\vert_{\mathcal{O}}$ is a measurable mapping
\[
 (\mathscr{S}'(\R^n),\mathcal{B}_c(\mathscr{S}'(\R^n)))\to (\mathscr{S}'(\mathcal{O}),\mathcal{B}_c(\mathscr{S}'(\mathcal{O}))).
\]

\begin{definition}
 Let $(\Omega,\mathcal{F},\mathbb{P})$ be a probability space. A generalized random process $s$ is a measurable function
 \[
  s\colon (\Omega,\mathcal{F})\to (\mathscr{S}'(\R^n),\mathcal{B}_c(\mathscr{S}'(\R^n))).
 \]
The pushforward measure $\mathbb{P}_s$ defined by
\[
 \mathbb{P}_s(B):=\mathbb{P}(s^{-1}(B))\quad (B\in\mathcal{B}_c(\mathscr{S}'(\R^n)))
\]
is called probability law of $s$. Moreover, the characteristic functional $\widehat{\mathbb{P}}_s$ of $s$ is defined by
\[
 \widehat{\mathbb{P}}_s(\varphi):=\int_{\mathscr{S}'(\R^n)} \exp(i\langle u , \varphi \rangle)\,d\mathbb{P}_s(u).
\]
We will write $s(\omega)$ for the tempered distribution at $\omega\in\Omega$ and $\langle s, \varphi\rangle$ for the random variable which one obtains by testing $s$ against the Schwartz function $ \varphi\in\mathscr{S}(\R^n)$.
\end{definition}

In certain situations we also speak of a generalized random process if there only is a null set $N\subset\Omega$ such that the range of $s\vert_{\Omega\setminus N}$ is a subset of $\mathscr{S}'(\R^n)$. But since we assume our probability space to be complete, we may change every measurable mapping $f\colon(\Omega,\mathcal{F})\to (M,\mathcal{A})$ for a measurable space $(M,\mathcal{A})$ on arbitrary null sets without affecting the measurability. Thus, for our purposes we can neglect the difference between a generalized random process and a mapping which is a generalized random process only after some change on a null set. This also applies to the following definition:
\begin{definition}
 Let
 \[
  s_1,s_2\colon(\Omega,\mathcal{F})\to (\mathscr{S}'(\R^n),\mathcal{B}_c(\mathscr{S}'(\R^n)))
 \]
 be two generalized random processes. We say that $s_2$ is a modification of $s_1$, if 
\[
 \mathbb{P}(\langle s_1,\varphi\rangle=\langle s_2,\varphi\rangle)=1
\]
for all $\varphi\in\mathscr{S}(\R^n)$.
\end{definition}

Similar to Bochner's theorem for random variables, the Bochner-Minlos theorem gives a necessary and sufficient condition for a mapping $C\colon\mathscr{S}(\R^n)\to \C$ to be the characteristic functional of a generalized random process.
\begin{theorem}[Bochner-Minlos]
 A mapping $C\colon\mathscr{S}(\R^n)\to \C$ is the characteristic functional of a generalized random process if and only if $C$ is continuous, $C(0)=1$ and $C$ is positive definite, i.e. for all $N\in\N$, all $z_1,\ldots, z_N\in\C$, and all $\varphi_1,\ldots,\varphi_N\in\mathscr{S}(\R^n)$ it holds that
 \[
  \sum_{j,k=1}^N z_j\overline{z_k}C(\varphi_j-\varphi_k)\geq0.
 \]
\end{theorem}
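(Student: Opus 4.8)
The plan is to prove the two implications separately; the necessity direction is routine and the sufficiency direction rests on the nuclearity of $\mathscr{S}(\R^n)$. For necessity, suppose $C=\widehat{\mathbb{P}}_s$ for a generalized random process $s$. Then $C(0)=\int_{\mathscr{S}'(\R^n)}1\,d\mathbb{P}_s=1$; for $z_1,\dots,z_N\in\C$ and $\varphi_1,\dots,\varphi_N\in\mathscr{S}(\R^n)$ one rewrites
\[
 \sum_{j,k=1}^N z_j\overline{z_k}C(\varphi_j-\varphi_k)=\int_{\mathscr{S}'(\R^n)}\Big|\sum_{j=1}^N z_j\exp(i\langle u,\varphi_j\rangle)\Big|^2\,d\mathbb{P}_s(u)\geq0,
\]
which is positive definiteness; and continuity follows by dominated convergence, since $\varphi_m\to\varphi$ in $\mathscr{S}(\R^n)$ forces $\langle u,\varphi_m\rangle\to\langle u,\varphi\rangle$ for every fixed $u\in\mathscr{S}'(\R^n)$ while $|\exp(i\langle u,\varphi_m\rangle)|=1$.

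For sufficiency, assume $C$ is continuous, $C(0)=1$ and positive definite. I would build the law directly on the canonical measurable space $(\mathscr{S}'(\R^n),\mathcal{B}_c(\mathscr{S}'(\R^n)))$. First, for fixed $\varphi_1,\dots,\varphi_N\in\mathscr{S}(\R^n)$ the function $(t_1,\dots,t_N)\mapsto C\big(\sum_j t_j\varphi_j\big)$ on $\R^N$ is continuous, equals $1$ at the origin, and is positive definite (insert $t_j\varphi_j$ into the hypothesis on $C$); by the classical finite-dimensional Bochner theorem it is the characteristic function of a Borel probability measure $\mu_{\varphi_1,\dots,\varphi_N}$ on $\R^N$. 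These finite-dimensional distributions are consistent under the linear maps $u\mapsto(\langle u,\varphi_1\rangle,\dots,\langle u,\varphi_N\rangle)$, so Kolmogorov's extension theorem yields a finitely additive cylindrical probability measure on the cylinder algebra of $\mathscr{S}'(\R^n)$.

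The crux is to upgrade this cylindrical measure to a genuinely $\sigma$-additive probability measure $\mathbb{P}_C$ on $\mathcal{B}_c(\mathscr{S}'(\R^n))$, and here one exploits that $\mathscr{S}(\R^n)$ is a nuclear Fr\'echet space. Continuity of $C$ at $0$ gives, for each $\epsilon>0$, a continuous Hilbertian seminorm $p$ on $\mathscr{S}(\R^n)$ with $|1-C(\varphi)|<\epsilon$ whenever $p(\varphi)\le1$; nuclearity then lets one choose a larger continuous Hilbertian seminorm $q$ so that the canonical map between the associated Hilbert completions $\mathscr{S}_q\to\mathscr{S}_p$ is Hilbert--Schmidt. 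Minlos's lemma converts the Hilbert--Schmidt norm into a uniform tightness bound for the cylindrical measure over balls in the dual Hilbert seminorm; since such balls lie inside $\mathscr{S}'(\R^n)$ and are weakly compact, a Prokhorov-type argument delivers $\sigma$-additivity and concentration on $\mathscr{S}'(\R^n)$. Finally one takes $s:=\id$ on $\mathscr{S}'(\R^n)$, which is $\mathcal{B}_c$-measurable and satisfies $\widehat{\mathbb{P}}_s(\varphi)=\int_{\mathscr{S}'(\R^n)}\exp(i\langle u,\varphi\rangle)\,d\mathbb{P}_C(u)=C(\varphi)$ for all $\varphi\in\mathscr{S}(\R^n)$ by construction.

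I expect the nuclearity step --- passing from the finitely additive cylindrical measure furnished by Bochner plus Kolmogorov to an honest countably additive measure that lives on $\mathscr{S}'(\R^n)$ rather than on the larger algebraic dual --- to be the main obstacle; this is exactly the point where the nuclear structure of $\mathscr{S}(\R^n)$, through the Hilbert--Schmidt embedding estimate, is indispensable, whereas the remaining ingredients (finite-dimensional Bochner, Kolmogorov extension, the canonical realization) are classical.
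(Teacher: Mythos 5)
The paper does not actually prove this statement: it is quoted as a classical result, with the subsequent remark pointing to Minlos's original article and to Gel'fand--Vilenkin. What you have written is therefore not comparable to an in-paper argument but is a reconstruction of the classical proof, and as an outline it is correct and has the right architecture. The necessity direction is complete as stated: the identity
\[
 \sum_{j,k=1}^N z_j\overline{z_k}\,C(\varphi_j-\varphi_k)=\int_{\mathscr{S}'(\R^n)}\Bigl|\sum_{j=1}^N z_j e^{i\langle u,\varphi_j\rangle}\Bigr|^2\,d\mathbb{P}_s(u)
\]
gives positive definiteness, and dominated convergence gives sequential continuity, which suffices because $\mathscr{S}(\R^n)$ is metrizable. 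For sufficiency, the chain ``finite-dimensional Bochner $\to$ consistent family $\to$ cylindrical measure $\to$ $\sigma$-additivity via nuclearity $\to$ canonical realization $s=\id$'' is exactly the standard route. The one substantive ingredient you invoke without proof is the quantitative step you call Minlos's lemma: the inequality that bounds the cylindrical outer measure of the complement of a dual ball $\{u: q^*(u)\le R\}$ by (roughly) $\epsilon + c\,\|j_{q,p}\|_{\mathrm{HS}}^2/R^2$, where $j_{q,p}\colon\mathscr{S}_q\to\mathscr{S}_p$ is the Hilbert--Schmidt link between the local Hilbert spaces; this is usually proved by integrating $1-\Re\, C$ against an auxiliary Gaussian measure and is the only place where positive definiteness, continuity at $0$ and nuclearity interact. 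Since you name it and place it correctly, and since the paper itself treats the whole theorem as citable, I would not call this a gap in the intended sense, but a self-contained writeup would need to include that estimate together with the (routine but necessary) verification that the limiting measure is carried by the continuous dual $\mathscr{S}'(\R^n)$ rather than the algebraic dual, which is precisely what the weak-$*$ compactness of the dual balls delivers.
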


\begin{remark}\vspace{0mm}\label{Rem:CharFunctional}
 \begin{enumerate}[(a)]
  \item The Bochner-Minlos also holds if $\mathscr{S}(\R^n)$ is replaced by a nuclear space as for example the space of test functions $\mathscr{D}(\R^n)$. It seems like the Bochner-Minlos theorem was first formulated and proved in \cite{Minlos_1959}.
  \item An important example of a characteristic functional is given by
  \[
   C(\varphi):=\exp\left(\int_{\R^n}\Psi(\varphi(x))\,dx \right)
  \]
    for a L\'evy exponent $\Psi$. This is always a characteristic functional on the space of test functions $\mathscr{D}(\R^n)$, see for example \cite[Chapter III, Theorem 5]{Gelfand_Vilenkin_1964}. However, this is not always true for the Schwartz space $\mathscr{S}(\R^n)$. In fact, $C$ is a characteristic functional on $\mathscr{S}(\R^n)$ if and only if it has positive absolute moments, i.e. if there is an $\epsilon>0$ such that $\E[|X|^{\epsilon}]<\infty$, where $X$ is an infinitely divisible random variable corresponding to the L\'evy triplet $\Psi$. We refer the reader to \cite[Theorem 3]{Fageot_Amini_Unser_2014} for the sufficiency and to \cite{Dalang_Humeau_2017} for the necessity.
 \end{enumerate}
\end{remark}

\begin{definition}\label{Def:WhiteNoise}
 Let $(\gamma,\sigma^2,\nu)$ be a L\'evy triplet such that the corresponding infinitely divisible random variable has positive absolute moments. A L\'evy white noise $\eta\colon \Omega\to\mathscr{S}'(\R^n)$ with L\'evy triplet $(\gamma,\sigma^2,\nu)$ is the generalized random process with characteristic functional
 \[
  \mathbb{P}_{\eta}(\varphi)=\exp\left(\int_{\R^n} \Psi(\varphi(x))\,dx\right)\quad(\varphi\in\mathscr{S}(\R^n)).
 \]
 If we speak of a L\'evy white noise on a domain $\mathcal{O}\subset\R^n$, then we mean that it is given by $\eta\vert_{\mathcal{O}}$ for a Levy white noise $\eta$ on $\R^n$.
\end{definition}

\begin{remark}\label{Rem:WhiteNoiseModel}
 From a modeling point of view, there are some minimum requirements one has on a random process to call it a white noise. For example, a white noise should and indeed our white noise from Definition \ref{Def:WhiteNoise} does satisfy the following:
  \begin{enumerate}[(a)]
  \item A white noise is invariant under Euclidean motions in the sense that for $f\in\mathscr{D}(\R^n)$ and for an Euclidean motion $A$ the random variables $\eta(f)$ and $\eta(f\circ A)$ have the same distribution. This can for example be seen by comparing their characteristic functions:
  \[
   \E[e^{i\xi\eta(f)}]=\exp\bigg(\int_{\R^n}\Psi(\xi f(x))\,dx\bigg)=\exp\bigg(\int_{\R^n}\Psi(\xi f(Ax))\,dx\bigg)=\E[e^{i\xi\eta(f\circ A)}]\quad(\xi\in\R).
  \]
    For the representation of the characteristic function, see for example \cite[Theorem 2.7 (iv)]{Rajput_Rosinski_1989}.
  \item The random variables $\eta(f)$ and $\eta(g)$ are independent if $f,g\in\mathscr{D}(\R^n)$ have disjoint supports. Indeed, if $f,g$ have disjoint supports then $\Psi(f+g)=\Psi(f) +\Psi(g)$ and therefore
  \begin{align*}
    &\E[e^{i(\xi_1\eta(f)+\xi_2\eta(g))}]=\exp\bigg(\int_{\R^n}\Psi(\xi_1 f(x)+\xi_2g(x))\,dx\bigg)\\
    &=\exp\bigg(\int_{\R^n}\Psi(\xi_1 f(x))+\Psi(\xi_2g(x))\,dx\bigg)
    =\E[e^{i\xi_1\eta(f)}]\E[e^{i\xi_2\eta(g)}]
  \end{align*}
  \item If second moments exist, then we have the relation
  \[
   \operatorname{cov}(\eta(f),\eta(g))=\langle f,g \rangle_{L_2(\R^n)}\quad(f,g\in\mathscr{D}(\R^n)).
  \]
    It seems like this has not been stated in this form for L\'evy white noise in the literature before. We therefore refer the reader to the author's Ph.D. thesis, \cite[Proposition 3.33]{Hummel_2019}.
 \end{enumerate}
\end{remark}

\begin{remark}\label{Rem:Extension}
 By an approximation procedure it is possible to plug many more functions into a white noise than just test functions or Schwartz functions. For example, it is always possible to apply a L\'evy white noise to elements of $L_2(\R^n)$ with compact support. In particular, this includes indicator functions $\mathbbm{1}_A$ for bounded Borel sets $A\in\mathcal{B}(\R^n)$ which is useful for the construction of a stochastic integral. The idea for the construction of such an integral goes back to \cite{Urbanik_Woyczynski_1967} and was further refined in \cite{Rajput_Rosinski_1989}. We also refer the reader to \cite{Fageot_Humeau_2017} in which the extension of the domain of definition is carried out in full detail. We will now briefly summarize the results we need in this work.
\end{remark}

\begin{definition}
 Let $\eta$ be a L\'evy white noise with triplet $(\gamma,\sigma^2,\nu)$ and let $p\geq0$.
 \begin{enumerate}[(a)]
  \item The $p$-th order Rajput-Risi\'nski exponent $\Psi_p$ of $\eta$ is defined by
  \[
   \Psi_p(\xi):=\bigg|\gamma\xi+\int_{\R\setminus\{0\}}x\xi(\mathbbm{1}_{|x\xi|\leq1}-\mathbbm{1}_{|x|\leq 1})\,d\nu(x)\bigg|+\sigma^2\xi^2+\int_{\R\setminus\{0\}} |x\xi|^p\mathbbm{1}_{|x\xi|>1}+|x\xi|^2\mathbbm{1}_{|x\xi|\leq1}\,d\nu(x)
  \]
  for $\xi\in\R$.
 \item We define the space $L_p(\eta)$ by
    \[
     L_p(\eta):=\bigg\{f\in L_0(\R^n):\int_{\R^n}\Psi_p(f(x))\,dx<\infty\bigg\}
    \]
    and endow it with the metric
    \[
     d_{\Psi_p}(f,g):=\inf\bigg\{\lambda>0:\int_{\R^n}\Psi_p\big(\tfrac{f(x)-g(x)}{\lambda}\big)<\lambda\bigg\}.
    \]
    The elements of $L_0(\eta)$ will be called $\eta$-integrable.
 \end{enumerate}
\end{definition}

\begin{proposition}\label{Prop:IntegrableFunctions} Let $\eta$ be a L\'evy white noise with triplet $(\gamma,\sigma^2,\nu)$ and $p\geq0$.
 \begin{enumerate}[(a)]
  \item The space $L_p(\eta)$ is a complete linear metric space.
  \item The space of test functions $\mathscr{D}(\R^n)$ is dense in $L_0(\eta)$.
  \item The L\'evy white noise $\eta$ extends to a continuous linear mapping
  \[
   \eta\colon L_p(\eta)\to L_p(\Omega),\,f\mapsto\langle\eta,f\rangle.
  \]
  \item Let $f\in L_0(\eta)$. Then the characteristic function of $\langle\eta,f\rangle$ is again given by
  \[
   \E[e^{i\xi\langle\eta,f\rangle}]=\exp\bigg(\int_{\R^n}\Psi(\xi f(x))\,dx\bigg).
  \]

 \end{enumerate}
\end{proposition}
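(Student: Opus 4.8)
The proposition collects the standard properties of integration against an independently scattered random measure, in the form worked out for Lévy white noise in \cite{Rajput_Rosinski_1989, Fageot_Humeau_2017}; here is how I would assemble the four parts.

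\emph{Part (a).} I would read off from the definition that $\Psi_p$ is even, continuous, nondecreasing on $[0,\infty)$, vanishes at the origin, and satisfies a doubling estimate $\Psi_p(2\xi)\le C\,\Psi_p(\xi)$ (checked term by term: the Gaussian part picks up a factor $4$, the jump and drift-correction parts are handled using $\min\{|x\xi|^p,|x\xi|^2\}$-type bounds). Hence $m_p(f):=\int_{\R^n}\Psi_p(f(x))\,dx$ is a modular in the sense of Orlicz--Musielak theory, the Luxemburg-type gauge $d_{\Psi_p}$ is a genuine translation-invariant metric, and $d_{\Psi_p}(f_n,f)\to0$ is equivalent to $m_p(f_n-f)\to0$. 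Completeness is then a Riesz--Fischer argument: from a $d_{\Psi_p}$-Cauchy sequence extract an a.e.\ convergent subsequence, bound the modular of the pointwise limit by Fatou, and upgrade a.e.\ convergence plus modular control to $d_{\Psi_p}$-convergence using the doubling property.

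\emph{Part (b).} Given $f\in L_0(\eta)$, first truncate: $f_R:=f\,\mathbbm{1}_{\{|x|\le R\}}\mathbbm{1}_{\{|f(x)|\le R\}}$ satisfies $|f_R-f|\le 2|f|$, so $\Psi_0(f_R-f)\le C\,\Psi_0(f)\in L_1(\R^n)$ while $f_R-f\to 0$ pointwise; dominated convergence for $m_0$ gives $d_{\Psi_0}(f_R,f)\to0$. For $f$ bounded with compact support, a mollification $f\ast\rho_\varepsilon$ converges to $f$ in every $L_q(\R^n)$, $q<\infty$, the supports stay in a fixed compact set, and $\Psi_0(\xi)\lesssim\xi^2$ for $|\xi|$ bounded; hence $m_0(f\ast\rho_\varepsilon-f)\to0$. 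A diagonal choice of $R$ and $\varepsilon$ produces a test-function approximant. Running the same argument with $\Psi_p$ and $L_p(\eta)$ in place of $\Psi_0$ and $L_0(\eta)$ shows, more generally, that $\mathscr{D}(\R^n)$ is dense in $L_p(\eta)$, which is what part (c) requires.

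\emph{Parts (c) and (d).} For $f\in\mathscr{D}(\R^n)$ the random variable $\langle\eta,f\rangle$ is already defined, and by Definition \ref{Def:WhiteNoise} it is infinitely divisible with characteristic function $\exp\big(\int_{\R^n}\Psi(\xi f(x))\,dx\big)$, so (d) holds on $\mathscr{D}(\R^n)$. The technical core is the moment estimate
\[
 \E\big[\,|\langle\eta,f\rangle|^p\,\big]\le C_p\max\{m_p(f),\,m_p(f)^{p/2}\}\qquad(f\in\mathscr{D}(\R^n)),
\]
with the obvious reading for $p\le 1$, which is exactly the statement that $\Psi_p$ encodes an upper gauge for the $p$-th absolute moment of an infinitely divisible law in terms of its Lévy--Khinchine data. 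It shows $f\mapsto\langle\eta,f\rangle$ is continuous from $(\mathscr{D}(\R^n),d_{\Psi_p})$ into the complete space $L_p(\Omega)$, so by the density from (b) it extends uniquely to a continuous linear map on $L_p(\eta)$, giving (c). For (d), pick $\mathscr{D}(\R^n)\ni f_n\to f$ in $L_p(\eta)$: then $\langle\eta,f_n\rangle\to\langle\eta,f\rangle$ in $L_p(\Omega)$, hence in distribution, so the left-hand side of the identity in (d) passes to the limit; and $\xi f_n\to\xi f$ in $L_p(\eta)$ while $\Psi_p$ dominates the real and imaginary parts of $\Psi$ up to a constant, so $\int\Psi(\xi f_n)\to\int\Psi(\xi f)$ and the right-hand side passes to the limit as well. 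Since the identity holds on the dense set $\mathscr{D}(\R^n)$, it holds throughout $L_p(\eta)$.

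\emph{Main obstacle.} Everything except the moment bound in (c) is soft (modular-space theory, density, extension by continuity). The real work is the estimate $\E[\,|\langle\eta,f\rangle|^p\,]\lesssim \max\{m_p(f),m_p(f)^{p/2}\}$: one must verify that $\Psi_p$ — with its splitting of $\nu$ at $|x\xi|=1$, its drift-correction term, and its Gaussian part — is, up to constants and the indicated maximum, an upper gauge for the $p$-th moment of the corresponding infinitely divisible variable. This is the heart of the Rajput--Rosiński construction and is where all the case distinctions in the definition of $\Psi_p$ are spent.
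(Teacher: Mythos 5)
Your sketch is essentially a reconstruction of the results the paper itself merely cites: the paper's ``proof'' of Proposition \ref{Prop:IntegrableFunctions} is a pointer to \cite[Proposition 3.9]{Fageot_Humeau_2017}, the Orlicz--Musielak machinery in \cite{Rao_Ren_1991}, and \cite[Theorem 2.7, Lemma 3.1, Theorem 3.3]{Rajput_Rosinski_1989}, and you have correctly identified the same division of labour, including the fact that the only hard step is the Rajput--Rosi\'nski moment gauge $\E[\,|\langle\eta,f\rangle|^p\,]\lesssim\max\{m_p(f),m_p(f)^{p/2}\}$ behind part (c). Two local claims in your write-up are not literally true and should be repaired, though neither breaks the argument. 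First, $\Psi_p$ need not be nondecreasing on $[0,\infty)$: the absolute value around the drift-compensation term $\gamma\xi+\int x\xi(\mathbbm{1}_{|x\xi|\le1}-\mathbbm{1}_{|x|\le1})\,d\nu(x)$ can produce cancellations, and Rajput--Rosi\'nski only establish that $\Psi_p$ is \emph{equivalent} to a nondecreasing continuous $\Delta_2$-function, which is what the modular-space theory actually needs. Second, the bound $\Psi_0(\xi)\lesssim\xi^2$ for bounded $\xi$ is false in general: the drift term is of order $|\xi|$, and the big-jump contribution $\nu(\{|x|>1/|\xi|\})$ can decay arbitrarily slowly as $\xi\to0$ (only $\nu(\{|x|>1\})<\infty$ is guaranteed). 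The mollification step survives with the weaker, correct facts that $\Psi_0$ is bounded on bounded sets, vanishes continuously at $0$, and that $f\ast\rho_\varepsilon\to f$ almost everywhere along a subsequence with supports in a fixed compact set, so dominated convergence still yields $m_0(f\ast\rho_\varepsilon-f)\to0$.
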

\begin{proof}
 This is a collection of the statements given in \cite[Proposition 3.9]{Fageot_Humeau_2017}, \cite[Chapter X, Theorem 2, Proposition 5 \& Corollary 6]{Rao_Ren_1991} and \cite[Theorem 2.7, Lemma 3.1 \& Theorem 3.3]{Rajput_Rosinski_1989}.
\end{proof}

\begin{remark}\vspace{0mm}\label{Rem:Remark_Integrable_Functions}
 \begin{enumerate}[(a)]
 \item In the general case, it can be difficult to give a nice characterization of the space $L_0(\eta)$. However, as already mentioned in Remark \ref{Rem:Extension}, elements of $L_2(\R^n)$ with compact support are always contained in $L_0(\eta)$.  Moreover, $\mathscr{S}(\R^n)$ is contained in $L_0(\eta)$ if the white noise $\eta$ admits positive absolute moments, see Remark \ref{Rem:CharFunctional}. We also refer the reader to \cite[Table 1]{Fageot_Humeau_2017} which contains a list of examples. For instance, in the Gaussian case we have $L_0(\eta)=L_2(\R^n)$. The same holds if the L\'evy triplet is given by $(0,\sigma^2,\nu)$ with $\nu$ being symmetric and having finite variance, see \cite[Proposition 5.10]{Fageot_Humeau_2017}. If the L\'evy triplet is given by $(\gamma,0,0)$ ($\gamma\neq0$) then we have $L_0(\eta)=L_1(\R^n)$ and for $(\gamma,\sigma^2,0)$ ($\gamma\neq0$, $\sigma^2>0$) by $L_1(\R^n)\cap L_2(\R^n)$.
  \item If one wants to work with paths of a L\'evy white noise, then a characterization of the Besov regularity of these paths might be more useful than Proposition \ref{Prop:IntegrableFunctions} in certain situations. Fortunately, a lot of nice work has already been done in this direction. For example, local regularity of Gaussian white noise has been studied in \cite{Veraar_2011}. In \cite{Fageot_Unser_Ward_2017} similar results have been obtained for L\'evy white noise. Global smoothness properties of L\'evy white noise in weighted spaces have been established in \cite{Aziznejad_Fageot_Unser_2018} and \cite{Fageot_Fallah_Unser_2017}. Results as in the latter two references will be important for the derivation of mixed smoothness properties. But before we can formulate them, we first need to introduce the Blumenthal-Getoor indices and the moment index of a L\'evy white noise.
  \end{enumerate}
\end{remark}

\begin{definition}
 Let $\eta$ be a L\'evy white noise with L\'evy exponent $\Psi$. Then the Blumenthal-Getoor indices are defined by
 \begin{align*}
  \beta_{\infty}&:=\inf\bigg\{p>0:\lim_{|\xi|\to\infty}\frac{|\Psi(\xi)|}{|\xi|^p}=0\bigg\},\\
  \underline{\beta}{}_{\infty}&:=\inf\bigg\{p>0:\liminf_{|\xi|\to\infty}\frac{|\Psi(\xi)|}{|\xi|^p}=0\bigg\}.
 \end{align*}
In addition, the moment index is defined by
\[
 p_{\max}:=\sup\{p>0:\E[|\eta(\mathbbm{1}_{[0,1]^n})|^p]<\infty\}.
\]
\end{definition}

In general it holds that $0\leq \underline{\beta}{}_{\infty}\leq \beta_{\infty} \leq 2$.

\begin{theorem}\label{Thm:White_Noise_Regularity}
 Let $\eta$ be a L\'evy white noise with L\'evy triplet $(\gamma,\sigma^2,\nu)$, Blumenthal-Getoor indices $\beta_{\infty}, \underline{\beta}{}_{\infty}$ and moment index $p_{\max}$. Let further $p\in(0,\infty)$.
 \begin{enumerate}[(a)]
  \item Gaussian case:\\
  Suppose that $\nu=0$. Then it holds that
    \begin{align*}
     \mathbb{P}(\eta\in B^{s}_{p,p}(\R^n,\langle\cdot\rangle^{\rho}))&=1,\quad\text{if }s<\tfrac{n}{2} \text{ and }\rho<-n,\\
     \mathbb{P}(\eta\notin B^{s}_{p,p}(\R^n,\langle\cdot\rangle^{\rho}))&=1,\quad\text{if }s\geq\tfrac{n}{2}\text{ or }\rho\geq-n.
    \end{align*}
  \item Compound Poisson case:\\
  Suppose that $\nu$ is a finite measure on $\mathcal{B}(\R^n\setminus\{0\})$ and that $\sigma^2=0$. Then it holds that
    \begin{align*}
     \mathbb{P}(\eta\in B^{s}_{p,p}(\R^n,\langle\cdot\rangle^{\rho}))&=1,\quad\text{if }s<n(\tfrac{1}{p}-1) \text{ and }\rho<-\tfrac{np}{\min\{p,p_{\max}\}},\\
     \mathbb{P}(\eta\notin B^{s}_{p,p}(\R^n,\langle\cdot\rangle^{\rho}))&=1,\quad\text{if }s\geq n(\tfrac{1}{p}-1)\text{ or }\rho>-\tfrac{np}{\min\{p,p_{\max}\}}.
    \end{align*}
  \item \label{Thm:White_Noise_Regularity:General} General non-Gaussian case:\\
  Suppose that $\nu\neq0$ and and that $p\leq2$ or $p\in2\N$. Then it holds that
      \begin{align*}
     \mathbb{P}(\eta\in B^{s}_{p,p}(\R^n,\langle\cdot\rangle^{\rho}))&=1,\quad\text{if }s<n(\tfrac{1}{\max\{p,\beta_{\infty}\}}-1)\text{ and }\rho<-\tfrac{np}{\min\{p,p_{\max}\}},\\
     \mathbb{P}(\eta\notin B^{s}_{p,p}(\R^n,\langle\cdot\rangle^{\rho}))&=1,\quad\text{if }s>n(\tfrac{1}{\max\{p,\underline{\beta}{}_{\infty}\}}-1)\text{ or }\rho>-\tfrac{np}{\min\{p,p_{\max}\}}.
    \end{align*}
 \end{enumerate}
\end{theorem}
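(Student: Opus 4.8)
The three assertions are, up to the weight convention fixed in Definition \ref{Def:LebesgueBochnerSpaces}, restatements of the global regularity results for Gaussian and L\'evy white noise obtained in \cite{Aziznejad_Fageot_Unser_2018} and \cite{Fageot_Fallah_Unser_2017}. The plan is therefore not to reprove the underlying probabilistic estimates but to cite the correct statement for each case and to reconcile the differing normalisations of the weighted norms. Concretely, I would first isolate, for each of (a), (b) and (c), the positive (membership) and the negative (non-membership) halves and match each half to the corresponding sufficient (resp.\ necessary) condition in the references: the Gaussian case (a) corresponds to the Gaussian instance treated there, the compound Poisson case (b) to the finite-L\'evy-measure instance, and (c) to the general result formulated in terms of the Blumenthal--Getoor indices, the restriction $p\leq 2$ or $p\in 2\N$ marking the range in which sharp two-sided bounds are presently available.

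The step I expect to be the genuine obstacle, and where most of the care is needed, is the translation of the weight convention. The cited works normalise the weighted Lebesgue norm by multiplying by $w$ (or $w^p$) directly, whereas Definition \ref{Def:LebesgueBochnerSpaces} multiplies by $w^{1/p}$ inside the $L_p$-norm; this is precisely the discrepancy already recorded in the proofs of Proposition \ref{Prop:DualBesov} and Proposition \ref{Prop:Weighted_Besov_Isomorphism}. To pass between the conventions without error I would route everything through Proposition \ref{Prop:Weighted_Besov_Isomorphism}: since $f\mapsto\langle\,\cdot\,\rangle^{\rho/p}f$ is an isomorphism $B^{s}_{p,p}(\R^n,\langle\,\cdot\,\rangle^{\rho})\to B^{s}_{p,p}(\R^n)$, the event $\{\eta\in B^{s}_{p,p}(\R^n,\langle\,\cdot\,\rangle^{\rho})\}$ coincides with $\{\langle\,\cdot\,\rangle^{\rho/p}\eta\in B^{s}_{p,p}(\R^n)\}$, and an analogous rewriting applies on the reference side. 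Tracking $\rho$ through this isomorphism, together with the admissibility of $\langle\,\cdot\,\rangle^{\rho}$ for every $\rho$ and the fact from Example \ref{Example:Weight} that it belongs to $A_{\infty}(\R^n)$ exactly when $\rho>-n$, is what pins down the critical weight exponent: it is $\rho<-\tfrac{np}{\min\{p,p_{\max}\}}$ in (b) and (c), reducing in case (a) to $\rho<-n$ via $p_{\max}=\infty$, with the borderline value excluded on the membership side because there the required spatial integrability of $\langle\,\cdot\,\rangle^{\rho/p}\eta$ just fails.

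Once the conventions are aligned, what remains is bookkeeping that I would carry out case by case. On the membership side I would check that the hypotheses of the cited theorem hold under our normalisation and then read off the smoothness and weight thresholds as stated, namely $s<\tfrac{n}{2}$ in (a), $s<n(\tfrac1p-1)$ in (b) and $s<n(\tfrac1{\max\{p,\beta_{\infty}\}}-1)$ in (c). On the non-membership side I would transfer the sharpness statements in the same way, the point being that in (c) the gap between the sufficient threshold $n(\tfrac1{\max\{p,\beta_{\infty}\}}-1)$ and the necessary threshold $n(\tfrac1{\max\{p,\underline{\beta}{}_{\infty}\}}-1)$ is exactly the gap coming from the two Blumenthal--Getoor indices $\underline{\beta}{}_{\infty}\leq\beta_{\infty}$. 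Finally, the almost-sure formulation needs no separate measurability argument: the Besov spaces involved are separable and the relevant estimates in \cite{Aziznejad_Fageot_Unser_2018,Fageot_Fallah_Unser_2017} are already phrased as statements holding with probability one for each fixed $p$, so the conclusions transfer after the isomorphic change of weight.
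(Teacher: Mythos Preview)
Your proposal is correct and follows essentially the same approach as the paper: the theorem is not proved from scratch but is quoted from \cite{Aziznejad_Fageot_Unser_2018} (the paper pins it down to Propositions 6, 9 and 12 there), with the only work being the translation of the weight convention. Your route through the isomorphism of Proposition \ref{Prop:Weighted_Besov_Isomorphism} is a valid way to do this translation, though the paper handles it more directly by simply observing that the weight exponent gets multiplied by $p$ when passing between the two conventions; your discussion of $A_{\infty}$-membership and of \cite{Fageot_Fallah_Unser_2017} is not needed here.
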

\begin{proof}
 This is a collection of Proposition 6, 9 and 12 from \cite{Aziznejad_Fageot_Unser_2018}. Note that the authors of \cite{Aziznejad_Fageot_Unser_2018} use a different convention concerning the notation of weighted Besov spaces so that the weight parameters in our formulation are multiplied by $p$ compared to the formulation in \cite{Aziznejad_Fageot_Unser_2018}.
\end{proof}

\begin{remark}\phantomsection \label{Rem:Dropping_Some_Conditions}
	\begin{enumerate}[(a)]
		\item In Theorem \ref{Thm:White_Noise_Regularity} \eqref{Thm:White_Noise_Regularity:General} one can weaken the restriction on $p$ by using Theorem \ref{Thm:Weighted_Besov_Complex_Interpolation} as follows: If there is $N\in2\N$ such that $p_{\max}\in(N,N+2)$ and if $p\in (1,\infty)\setminus(N,N+2)$, then it holds that
		\[
		     \mathbb{P}(\eta\in B^{s}_{p,p}(\R^n,\langle\cdot\rangle^{\rho}))=1,\quad\text{if }s<\tfrac{n}{\max\{p,\beta_{\infty}\}}-n\text{ and }\rho<-\tfrac{np}{\min\{p,p_{\max}\}}.
		\]
		If $p\in(N,N+2)$ then let $\theta\in(0,1)$ such that $1/p=(1-\theta)/N+\theta/(N+2)$. In this case, it holds that 
		\[
		     \mathbb{P}(\eta\in B^{s}_{p,p}(\R^n,\langle\cdot\rangle^{\rho}))=1,\quad\text{if }s<\tfrac{n}{\max\{p,\beta_{\infty}\}}-n\text{ and }\rho<-n(\tfrac{(1-\theta)p_{\max}+\theta p}{p_{\max}}).
		\]
		If there is no such $N$, i.e. if $p_{\max}\in2\N$, then
		\[
		     \mathbb{P}(\eta\in B^{s}_{p,p}(\R^n,\langle\cdot\rangle^{\rho}))=1,\quad\text{if }s<\tfrac{n}{\max\{p,\beta_{\infty}\}}-n\text{ and }\rho<-\tfrac{np}{\min\{p,p_{\max}\}}.
		\]
		without restriction on $p$.
		\item If one restricts the white noise $\eta$ to a bounded set, for example $[0,T]^n$ for some $T>0$, then one can also drop the conditions on $\rho$. More precisely, we have the following: In the Gaussian case it holds that
		  \begin{align*}
     \mathbb{P}(\eta\in B^{s}_{p,p}([0,T]^n))&=1,\quad\text{if }s<\tfrac{n}{2},\\
     \mathbb{P}(\eta\notin B^{s}_{p,p}([0,T]^n))&=1,\quad\text{if }s\geq\tfrac{n}{2}.
    \end{align*}
    In the compound Poisson case it holds that 
    		  \begin{align*}
     \mathbb{P}(\eta\in B^{s}_{p,p}([0,T]^n))&=1,\quad\text{if }s<n(\tfrac{1}{p}-1),\\
     \mathbb{P}(\eta\notin B^{s}_{p,p}([0,T]^n))&=1,\quad\text{if }s\geq n(\tfrac{1}{p}-1).
    \end{align*}
    In the general con-Gaussian case with $p\in(1,\infty)$ it holds that
        		  \begin{align*}
     \mathbb{P}(\eta\in B^{s}_{p,p}([0,T]^n))&=1,\quad\text{if }s<n(\tfrac{1}{\max\{p,\beta_{\infty}\}}-1),\\
     \mathbb{P}(\eta\notin B^{s}_{p,p}([0,T]^n))&=1,\quad\text{if }s\geq n(\tfrac{1}{\max\{p,\underline{\beta}{}_{\infty}\}}-1).
    \end{align*}
	\end{enumerate}
\end{remark}

\subsection{L\'evy processes with values in a Banach space}
We briefly derive some results on the regularity of sample paths of L\'evy processes with values in Banach spaces. While they are most probably far from being optimal, they allow us to also apply our methods to L\'evy white noise instead of just Gaussian white noise. Although our regularity results for L\'evy white noises will not be sharp, we develop our methods in a way such that the result can directly be improved once properties like the ones in \cite[Section 5.5]{Boettcher_Schilling_Wang_2013} have been derived for L\'evy processes in Banach spaces.

\begin{definition}
 Let $T>0$. As in the scalar-valued case, a stochastic process $(L_t)_{t\in[0,T]}$ with values in a Banach space $E$ is called L\'evy process if the following holds:
 \begin{enumerate}[(i)]
  \item $L_0=0$,
  \item $(L_t)_{t\in[0,T]}$ has independent increments, i.e. for all $N\in\N$ and all $0\leq t_1<\ldots<t_N\leq T$ it holds that $L_{t_2}-L_{t_1},\ldots L_{t_N}-L_{t_{N-1}}$ are independent.
  \item $(L_t)_{t\in[0,T]}$ has stationary increments, i.e. the law of $L_t-L_s$ only depends on $t-s$.
  \item $(L_t)_{t\in[0,T]}$ is continuous in probability.
 \end{enumerate}
\end{definition}

\begin{proposition}\label{Prop:Levy_Paths}
 Let $\epsilon>0$, $p\in(1,\infty)$ and let $(L_t)_{t\in[0,T]}$ a L\'evy process with values in a Banach space $E$. Then $(L_t)_{t\in[0,T]}$ has a modification with sample paths in $B^{0}_{p,p}([0,T];E)$ if $p\geq2$ and in $B^{-\epsilon}_{p,p}([0,T];E)$ if $p<2$..
\end{proposition}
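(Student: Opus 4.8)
The plan is to estimate the Besov norm of a sample path of $L$ directly from the wavelet-type characterization of $B^s_{p,p}([0,T];E)$ via dyadic differences, and to control the moments of these differences using the independence and stationarity of increments together with a Doob/Lévy-type maximal inequality. First I would recall that for $-1<s<1$, $s\neq 0$, the space $B^s_{p,p}([0,T];E)$ admits an equivalent norm of Besov--Nikolskii type built from first-order differences,
\[
 \|f\|_{B^s_{p,p}([0,T];E)}^p \sim \|f\|_{L_p([0,T];E)}^p + \int_0^T \frac{\|f(\cdot+h)-f(\cdot)\|_{L_p([0,T-h];E)}^p}{h^{1+sp}}\,dh,
\]
and for $s=0$, $p\geq 2$, that $L_p([0,T];E)\hookrightarrow B^0_{p,p}([0,T];E)$ is false in general but the slightly weaker statement we need — membership in $B^0_{p,p}$ — can be extracted from the real interpolation identity $B^0_{p,p}=(B^{-\epsilon}_{p,p},B^{\epsilon}_{p,p})_{1/2,p}$ together with the two-sided bound below, or more simply by noting $\|f\|_{B^0_{p,p}}^p \lesssim \sum_{k\geq 0}\|S_k f\|_{L_p}^p$ and estimating each Littlewood--Paley block. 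I would therefore aim to prove the single quantitative estimate
\[
 \E\Big[\sum_{k\geq 0} 2^{ksp}\,\|S_k L\|_{L_p([0,T];E)}^p\Big] < \infty
\]
for $s=0$ when $p\geq 2$ and for $s=-\epsilon$ when $1<p<2$, which by Tonelli gives $\sum_k 2^{ksp}\|S_kL\|_{L_p}^p<\infty$ almost surely, hence $L\in B^s_{p,p}$ after passing to the cadlag modification.

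The key computation is the bound on $\E\|L_{t+h}-L_t\|_E^p$. Since increments are stationary, $\E\|L_{t+h}-L_t\|_E^p=\E\|L_h\|_E^p=:g(h)$, independent of $t$; and since increments are independent, $t\mapsto L_t$ is an $E$-valued process with independent increments, so by the Lévy--Ottaviani/Montgomery-Smith maximal inequality there is a constant $c_p$ with $\E\sup_{0\leq t\leq h}\|L_t\|_E^p \leq c_p\, g(h)$. The continuity in probability plus the zero--one law for the oscillation gives that $g(h)\to 0$ as $h\to 0$; and by a standard subadditivity-type argument (splitting $[0,h]$ into $\lceil h/\delta\rceil$ pieces, applying the triangle inequality in $L_p(\Omega;E)$ and stationarity) one gets $g(h)^{1/p}\leq \lceil h/\delta\rceil\, g(\delta)^{1/p}$, so $g$ grows at most linearly for large $h$ and, crucially, $g(h)=o(1)$ is enough: combined with linear growth this yields $g(h)\lesssim h$ for $h\in(0,T]$ when we only need $g$ bounded. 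Plugging $\E\|L(\cdot+h)-L(\cdot)\|_{L_p([0,T-h];E)}^p = (T-h)g(h)\lesssim \min\{h,1\}$ into the difference-norm integral: for $s=-\epsilon<0$ the integrand is $\lesssim h^{-1+\epsilon p}\cdot\mathbf{1}$, which is integrable near $0$, giving the $p<2$ case directly with no restriction. For $p\geq 2$ and $s=0$ the integral $\int_0^T h^{-1}g(h)\,dh$ need not converge, so there I would instead work with the Littlewood--Paley blocks $S_kL$ directly: $\E\|S_k L\|_{L_p([0,T];E)}^p \lesssim T\,\E\|S_kL(0)\|_E^p$, and estimate $\|S_kL(t)\|_E$ by writing $S_kL(t)=\int \check\varphi_k(t-r)\,dL_r$ as an $E$-valued stochastic (Paley--Wiener) integral against the Lévy process — or, avoiding stochastic integration in Banach spaces, by the discrete characterization $\|f\|_{B^0_{p,p}}\sim \|(\langle f,\psi_{j,m}\rangle)\|_{\ell_p}$ in a suitable unconditional basis and bounding the wavelet coefficients $\langle L,\psi_{j,m}\rangle$, whose $p$-th moments decay like $2^{-jp/2}$ by the covariance/scaling structure, so that the $\ell_p$-sum over the $\sim 2^j$ coefficients at level $j$ is summable in $j$ precisely when $p\geq 2$ (this is the well-known ``Brownian motion lies in $B^0_{p,p}$ iff $p\geq2$'' phenomenon, here lifted to general $E$-valued Lévy processes via the maximal inequality).

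The main obstacle is the $p\geq 2$, $s=0$ endpoint: the naive difference-integral diverges logarithmically, so one genuinely needs the finer Littlewood--Paley/wavelet bookkeeping and a good $p$-th moment bound on a single block $S_kL$, uniform in $k$ up to the summable factor. In the scalar Lévy setting this is classical (it is the content of the references in \cite[Section 5.5]{Boettcher_Schilling_Wang_2013}), but in the Banach-space-valued setting the replacement for the second-moment/Gaussian computation is exactly the Lévy--Ottaviani--Montgomery-Smith maximal inequality for sums of independent $E$-valued increments, which lets one write $\E\|S_kL\|_{L_p}^p\lesssim \E\sup_{t}\|L_{t\wedge 2^{-k}}\|_E^p\lesssim g(2^{-k})\lesssim 2^{-k}$ (using the linear-growth bound on $g$ at small scales), and then $\sum_k 2^{k\cdot 0\cdot p}\|S_kL\|_{L_p}^p$ has expectation $\lesssim\sum_k 2^{-k}<\infty$. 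Everything else — the equivalence of Besov norms, Tonelli, the passage to a cadlag modification (which exists by continuity in probability and independent increments), and the reduction to $E$-valued increments — is routine. I would present the $1<p<2$ case first as the clean application of the difference characterization, and then treat $p\geq 2$ by the block estimate, noting that one can always interpolate down from $p\geq 2$ to recover a weaker statement but that the stated $B^{-\epsilon}_{p,p}$ for $p<2$ and $B^0_{p,p}$ for $p\geq 2$ are what the above yields cleanly.
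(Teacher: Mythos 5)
Your proposal founders on a point the proposition deliberately leaves open: a general L\'evy process has no finite $p$-th moments. The entire quantitative scheme rests on $g(h):=\E\|L_h\|_E^p<\infty$ (and on $\E\|S_kL\|_{L_p}^p\lesssim g(2^{-k})$), but the statement is for an arbitrary $E$-valued L\'evy process with no moment hypothesis; already for a scalar $\alpha$-stable process with $\alpha<p$ one has $g(h)=\infty$ for every $h>0$, so the expectation $\E\big[\sum_k 2^{ksp}\|S_kL\|_{L_p}^p\big]$ cannot be finite and Tonelli gives nothing. Two further steps are incorrect as stated. First, the first-difference characterization of $B^s_{p,p}$ is valid only for $0<s<1$, not for $s=-\epsilon<0$: elements of $B^{-\epsilon}_{p,p}$ need not be functions, and for $f\in L_p$ the difference integral with $s<0$ is finite for trivial reasons (bound the difference by $2\|f\|_{L_p}$ and integrate $h^{-1+\epsilon p}$), so it cannot define an equivalent norm on that space. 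Second, the inference that $g(h)=o(1)$ together with ``linear growth'' gives $g(h)\lesssim h$ is a non sequitur: the splitting argument yields $g(h)^{1/p}\le\lceil h/\delta\rceil\,g(\delta)^{1/p}$, i.e.\ growth of order $h^p$, and continuity in probability provides no rate for $g$ near $0$ (for Brownian motion $g(h)\sim h^{p/2}$, which is $\lesssim h$ only because $p\ge2$; in general nothing of the sort is available).

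The paper's proof is soft and avoids all of this. A L\'evy process in a Banach space has a c\`adl\`ag modification (see \cite{Peszat_Zabczyk_2007}); c\`adl\`ag paths on $[0,T]$ are uniform limits of simple functions, hence bounded, hence lie in $L_\infty([0,T];E)\hookrightarrow L_p([0,T];E)$ pathwise, with no moments of $L$ entering at any point. One then concludes with the purely deterministic embeddings $L_p\hookrightarrow B^0_{p,p}$ for $p\ge2$ and $L_p\hookrightarrow B^{-\epsilon}_{p,p}$ for $p<2$. This is essentially the route you brush past in your parenthetical remark about $L_p\hookrightarrow B^0_{p,p}$; for $p\ge 2$ that embedding is exactly what holds and exactly what is needed, so no probabilistic input beyond the existence of the c\`adl\`ag modification is required. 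If one wants something sharper than $B^0_{p,p}$ (the author explicitly notes the proposition is not sharp), then finer probabilistic arguments of the type you sketch become relevant, but they would have to be run under explicit moment or Blumenthal--Getoor assumptions rather than for an arbitrary L\'evy process.
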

\begin{proof}
 As a L\'evy process, $(L_t)_{t\in[0,T]}$ has a modification such that the sample paths are c\`adl\`ag, see \cite[Theorem 4.3]{Peszat_Zabczyk_2007}. In particular, the sample paths are jump continuous, i.e. they are contained in the closure of simple functions from $[0,T]$ to $E$ with respect to the $\|\cdot\|_{L_{\infty}([0,T])}$-norm. Therefore, the sample paths are elements of 
 \[
  L_{\infty}([0,T])\hookrightarrow L_p([0,T])\hookrightarrow B^0_{p,p}([0,T])
 \]
where the latter embedding only holds for $p\geq2$ and can for example be found in \cite[Theorem 6.4.4]{Bergh_Loefstroem_1976}. If $p\in(1,2)$, then the embedding $L_p([0,T])\hookrightarrow B^{-\epsilon}_{p,p}([0,T])$ holds.
\end{proof}

Proposition \ref{Prop:Levy_Paths} is surely not sharp, but simple and good enough for our purposes. Nonetheless, there are already much sharper results for $E$-valued Brownian motions.

\begin{theorem}\label{Thm:Brownian_Paths}
 Let $p,q\in[1,\infty)$ and let $(W_t)_{t\in[0,T]}$ be a Brownian motion with values in the Banach space $E$, i.e. $(W_t)_{t\in[0,T]}$ is an $E$-valued L\'evy process such that $(\lambda(W_t))_{t\geq0}$ is a Brownian motion for all $\lambda\in E'$. Then the sample paths of $(W_t)_{t\in[0,T]}$ are contained in $B^{\frac{1}{2}}_{p,\infty}([0,T];E)$ almost surely. Moreover, almost surely they are not contained in $B^{\frac{1}{2}}_{p,q}([0,T];E)$.
\end{theorem}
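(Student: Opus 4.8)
The plan is to reduce the vector-valued statement to the classical scalar result of Ciesielski–Roynette on the Besov regularity of one-dimensional Brownian motion, via the wavelet/Schauder characterization of the Besov spaces $B^{1/2}_{p,q}([0,T])$. Concretely, I would use the Faber–Schauder (hat function) system $\{\chi_{j,k}\}$ on $[0,T]$, for which a function $f$ lies in $B^{1/2}_{p,\infty}$ iff $\sup_j 2^{j(1/2 - 1/p)} 2^{-j/p}\bigl(\sum_k |2^{j/2}\langle f, \text{hat coeff}\rangle|^p\bigr)^{1/p}$ is finite (the precise normalization is standard; see Ciesielski's work referenced in the introduction). For a Brownian motion, the coefficients in this expansion are, after the usual Lévy–Ciesielski construction, independent centered Gaussians of variance $\sim 2^{-j}$, and the membership/non-membership dichotomy at smoothness $1/2$ is then a Borel–Cantelli computation on the suprema of finitely many i.i.d. Gaussians at each dyadic level.

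The key steps, in order: (1) Fix the Faber–Schauder system on $[0,T]$ and recall the sequence-space characterization of $B^{1/2}_{p,q}([0,T];E)$ in terms of the $E$-norms of the hat-function coefficients — this is the vector-valued analogue of the scalar characterization and follows since the Faber–Schauder system is an unconditional basis in the relevant range; for $s=1/2$ and $1\le p<\infty$ one is exactly at the borderline where the characterization still holds. (2) Expand $(W_t)$ in this system: because $W$ has a continuous modification (Kolmogorov, or directly the Lévy–Ciesielski series converges uniformly a.s.), the coefficients $\xi_{j,k} := W_{t_{j,k}} - \tfrac12(W_{t_{j,k}^-} + W_{t_{j,k}^+})$ are, for each fixed $j$, independent $E$-valued Gaussian vectors, all with the same law, and across scales they are independent; moreover $\|\xi_{j,k}\|_E$ has all moments finite with $\E\|\xi_{j,k}\|_E^p \sim c_p 2^{-jp/2}$ by Gaussian scaling (Fernique). (3) For the positive statement, estimate $\E\bigl[\sup_{0\le j\le J}2^{j(1/2-1/p)}\bigl(2^{-j}\sum_k\|\xi_{j,k}\|_E^p\bigr)^{1/p}\bigr]$: at level $j$ there are $\sim 2^j$ coefficients, so $2^{-j}\sum_k \|\xi_{j,k}\|_E^p$ concentrates around $c_p 2^{-jp/2}$ by the law of large numbers with Gaussian tails, giving the level-$j$ term $\lesssim 2^{j(1/2-1/p)}\cdot 2^{-j/2} = 2^{-j/p}$, which is summable in $j$; a union bound over $j$ controls the supremum and yields $W\in B^{1/2}_{p,\infty}$ a.s. (4) For the negative statement, note that $B^{1/2}_{p,q}$ with $q<\infty$ requires $\sum_j \bigl(2^{j(1/2-1/p)}(2^{-j}\sum_k\|\xi_{j,k}\|_E^p)^{1/p}\bigr)^q$ to converge; since each summand is $\sim 2^{-jq/p}\cdot(\text{a.s. bounded-below random factor})$... wait, that sums — so the obstruction must instead come from the $\sup_k$ direction: one uses that $\max_k \|\xi_{j,k}\|_E \gtrsim 2^{-j/2}\sqrt{j}$ infinitely often (extreme values of $2^j$ i.i.d. Gaussians), which by a second-moment/Borel–Cantelli argument forces the $B^{1/2}_{p,q}$-norm sequence not to be in $\ell_q$, hence not even in $\ell_\infty$ with the sharper normalization needed for $q<\infty$. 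I would phrase the non-membership through the known scalar result applied to $\lambda(W_t)$ for a fixed $\lambda\in E'$: if $W\in B^{1/2}_{p,q}([0,T];E)$ then $\lambda(W)\in B^{1/2}_{p,q}([0,T])$, contradicting the classical fact that scalar Brownian motion is a.s. not in $B^{1/2}_{p,q}$ for $q<\infty$.

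The main obstacle is step (1) together with making step (4) clean: the endpoint $s=1/2$ is precisely where the Faber–Schauder characterization of $B^{1/2}_{p,q}$ is delicate (for larger $s$ one needs smoother wavelets, for this exact value the hat system is borderline-admissible), and the vector-valued version requires care that the characterization does not secretly use a Hilbert-space or UMD structure of $E$ that a general Banach space lacks. I expect the resolution is that for $B^s_{p,q}$ with $0<s<1$ the Faber–Schauder characterization holds for arbitrary Banach-space-valued functions with no geometric assumption on $E$ (it is really a statement about the identity operator and dyadic averaging), so this goes through; alternatively, one sidesteps it entirely by citing an existing vector-valued Ciesielski-type theorem and reducing, as in step (4)'s final remark, the negative direction to the scalar case by testing against functionals. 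I would also remark that the positive direction can be obtained more softly from a Kolmogorov-type continuity theorem in Besov scales (Hytönen–Veraar), which is the route suggested by the reference \cite{Hytonen_Veraar_2008} in the introduction, making the Faber–Schauder machinery needed only for the sharp negative assertion.
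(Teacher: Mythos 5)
The paper does not actually prove this theorem: the stated proof is a one-line citation of \cite[Theorem 4.1]{Hytonen_Veraar_2008}, so your closing remark---that one can reduce everything to an existing vector-valued Ciesielski-type theorem of Hyt\"onen--Veraar---is precisely what the paper does. Judged as a self-contained argument, your sketch has two genuine gaps. First, step (1) is false as stated: the Faber--Schauder system characterizes $B^{s}_{p,q}$ only for $\tfrac{1}{p}<s<1+\tfrac{1}{p}$, because the coefficients are second differences of point evaluations, which are not defined on all of $B^{s}_{p,q}$ when $s\leq\tfrac{1}{p}$. At $s=\tfrac12$ this excludes exactly $p\leq 2$, which lies in the asserted range $p\in[1,\infty)$; and since $B^{1/2}_{p_1,q}([0,T];E)\hookrightarrow B^{1/2}_{p_2,q}([0,T];E)$ for $p_1\geq p_2$, the negative assertion is strongest (and must be proved) for small $p$, i.e.\ precisely where the characterization fails. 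Your guess that it holds for all $0<s<1$ independently of $p$ is incorrect. Hyt\"onen and Veraar avoid this entirely by using the first-order difference (modulus of continuity) characterization of $B^{s}_{p,q}([0,T];E)$ for $0<s<1$, which is valid for every Banach space $E$ and all $p,q$; if you insist on Faber--Schauder, you can at least salvage the positive direction by proving it for $p>2$ and embedding downwards.

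Second, your normalization in steps (3)--(4) is inconsistent, as you notice yourself (``wait, that sums''). If the level-$j$ contribution to the norm were really $\lesssim 2^{-j/p}$, the sequence of level contributions would lie in $\ell_q$ for every $q<\infty$ and you would have proved $W\in B^{1/2}_{p,q}$ for all $q$, contradicting the second half of the theorem. With the correct normalization the level-$j$ quantities are $\Theta(1)$: the variables $2^{jp/2}\|\xi_{j,k}\|_E^p$ are i.i.d.\ with finite moments (Fernique), so by the law of large numbers the normalized level sums converge almost surely to a positive constant; boundedness above gives membership in $B^{1/2}_{p,\infty}$, and boundedness away from zero rules out $\ell_q$ and hence $B^{1/2}_{p,q}$ for $q<\infty$. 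Your attempted rescue of the negative direction via extreme values $\max_k\|\xi_{j,k}\|_E\gtrsim 2^{-j/2}\sqrt{j}$ is not the operative mechanism and is unnecessary. By contrast, your alternative reduction of the negative direction to the scalar Ciesielski--Roynette theorem---testing against a fixed $\lambda\in E'$ for which $\lambda(W)$ is a nondegenerate Brownian motion, and using that $u\mapsto\lambda\circ u$ maps $B^{1/2}_{p,q}([0,T];E)$ into $B^{1/2}_{p,q}([0,T])$---is correct and is the cleanest fix for that half.
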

\begin{proof}
 This is one of the statements of \cite[Theorem 4.1]{Hytonen_Veraar_2008}.
\end{proof}

%%%%%%%%%%%%%%%%%%%%%%%%%%%%%%%%%%%%%%%%%%%%%%%%%%%%%%%%%%%%%%%%%%%%%%%%%%%%%%%%
%%%%%%%%%%%%%%%%%%%%%%----------New Section----------%%%%%%%%%%%%%%%%%%%%%%%%%%%
%%%%%%%%%%%%%%%%%%%%%%%%%%%%%%%%%%%%%%%%%%%%%%%%%%%%%%%%%%%%%%%%%%%%%%%%%%%%%%%%

\section{Regularity Properties in Spaces of Mixed Smoothness}\label{Section:Smoothness_Properties}
\begin{lemma}\label{lemma:ProductIndicator_old}
 Let $n=n_1+n_2$ with $n_1,n_2\in\N$ and let $s,t\in\R^{n_1}$, $s\leq t$. Let $\eta_n$ be a L\'evy white noise in $\R^n$ with L\'evy triplet $(\gamma,\sigma^2,\nu)$ and let $\eta_{n_2}$ be a L\'evy white noise in $\R^{n_2}$ with the same L\'evy triplet. Then the mapping
 \[
  L_0(\eta_{n_2})\to L_0(\eta_{n}),\,\varphi\mapsto \mathbbm{1}_{(s,t]}\otimes \varphi
 \]
is well-defined and continuous.
\end{lemma}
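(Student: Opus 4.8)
The plan is to reduce everything to a single explicit identity for the modular $\varphi\mapsto\int\Psi_0(\varphi)\,dx$ underlying the space $L_0(\eta)$, exploiting the product structure of $\mathbbm{1}_{(s,t]}\otimes\varphi$ together with $\Psi_0(0)=0$. (Note that $\Psi_0$ is determined by the common L\'evy triplet $(\gamma,\sigma^2,\nu)$, hence is literally the same function for $\eta_{n_2}$ and $\eta_n$; only the ambient dimension of the domain of integration changes.)

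\emph{Key identity and well-definedness.} Write $Q:=(s,t]=(s_1,t_1]\times\cdots\times(s_{n_1},t_{n_1}]\subset\R^{n_1}$, so that $\mathbbm{1}_{(s,t]}\otimes\varphi$ is the function $\R^n=\R^{n_1}\times\R^{n_2}\to\R$, $(x_1,x_2)\mapsto\mathbbm{1}_Q(x_1)\varphi(x_2)$; it is measurable whenever $\varphi$ is, and depends only on the equivalence class of $\varphi$. Since $\mathbbm{1}_Q$ takes only the values $0$ and $1$ and since $\Psi_0(0)=0$, for every $\lambda>0$ one has the pointwise identity $\Psi_0\big(\lambda^{-1}(\mathbbm{1}_Q\otimes\varphi)(x_1,x_2)\big)=\mathbbm{1}_Q(x_1)\,\Psi_0\big(\lambda^{-1}\varphi(x_2)\big)$, and Tonelli's theorem (the integrand is nonnegative) gives, for all measurable $\varphi$ and all $\lambda>0$,
\[
  \int_{\R^n}\Psi_0\big(\lambda^{-1}(\mathbbm{1}_Q\otimes\varphi)(x)\big)\,dx=\operatorname{Leb}_{n_1}(Q)\int_{\R^{n_2}}\Psi_0\big(\lambda^{-1}\varphi(x_2)\big)\,dx_2 .
\]
Taking $\lambda=1$ shows that $\varphi\in L_0(\eta_{n_2})$ implies $\int_{\R^n}\Psi_0(\mathbbm{1}_Q\otimes\varphi)\,dx<\infty$, i.e.\ $\mathbbm{1}_Q\otimes\varphi\in L_0(\eta_n)$. (If $s_j=t_j$ for some $j$, then $Q$ is a null set, $\mathbbm{1}_Q\otimes\varphi=0$, and the map is the zero map; so we may assume $\operatorname{Leb}_{n_1}(Q)>0$.) The map is clearly linear.

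\emph{Continuity.} Recall that $d_{\Psi_0}$ depends only on the difference of its arguments, $d_{\Psi_0}(f,g)=d_{\Psi_0}(f-g,0)$, and is a metric (Proposition \ref{Prop:IntegrableFunctions}(a)), so $d_{\Psi_0}(f+g,0)\le d_{\Psi_0}(f,0)+d_{\Psi_0}(g,0)$ by the triangle inequality. Partition $Q$ into finitely many pairwise disjoint boxes $R_1,\dots,R_N$ with $\operatorname{Leb}_{n_1}(R_i)\le1$ (e.g.\ subdivide one edge of $Q$ into $\lceil\operatorname{Leb}_{n_1}(Q)\rceil$ equal pieces), so that $\mathbbm{1}_Q\otimes\psi=\sum_{i=1}^N\mathbbm{1}_{R_i}\otimes\psi$. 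For each $i$, the displayed identity with $R_i$ in place of $Q$ gives $\int_{\R^n}\Psi_0(\lambda^{-1}\mathbbm{1}_{R_i}\otimes\psi)\,dx\le\int_{\R^{n_2}}\Psi_0(\lambda^{-1}\psi)\,dx_2$ for every $\lambda>0$; hence every $\lambda$ admissible in the definition of $d_{\Psi_0}(\psi,0)$ in $L_0(\eta_{n_2})$ is also admissible for $d_{\Psi_0}(\mathbbm{1}_{R_i}\otimes\psi,0)$ in $L_0(\eta_n)$, and passing to infima yields $d_{\Psi_0}(\mathbbm{1}_{R_i}\otimes\psi,0)\le d_{\Psi_0}(\psi,0)$. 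Combining with subadditivity,
\[
  d_{\Psi_0}\big(\mathbbm{1}_Q\otimes\psi,0\big)\le\sum_{i=1}^N d_{\Psi_0}\big(\mathbbm{1}_{R_i}\otimes\psi,0\big)\le N\,d_{\Psi_0}(\psi,0),
\]
so by linearity and translation invariance the map $\varphi\mapsto\mathbbm{1}_Q\otimes\varphi$ is $N$-Lipschitz, in particular continuous.

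\emph{Main obstacle.} There is no genuinely hard step; the two points that require a little care are getting the pointwise identity for $\Psi_0$ right (this is exactly where $\Psi_0(0)=0$ is used) and the reduction to boxes of volume $\le 1$, which makes it unnecessary to invoke any monotonicity or scaling property of $\Psi_0$ and lets one argue purely from the definition of the metric $d_{\Psi_0}$.
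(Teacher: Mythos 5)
Your proof is correct. The first half (the Tonelli/product identity for the modular, using $\Psi_0\ge 0$ and $\Psi_0(0)=0$, and the resulting well-definedness) is exactly the argument in the paper. The continuity argument, however, is genuinely different. The paper proves sequential continuity directly from the modular: given $\int_{\R^{n_2}}\Psi_0((\varphi-\varphi_k)/\epsilon)\,dr_2<\epsilon$, it distinguishes the cases $\operatorname{Leb}_{n_1}((s,t])\le 1$ and $\operatorname{Leb}_{n_1}((s,t])>1$, and in the second case rescales $\epsilon$ to $\tilde\epsilon=\operatorname{Leb}_{n_1}((s,t])\,\epsilon$, which implicitly uses that $\Psi_0(y/\tilde\epsilon)\le\Psi_0(y/\epsilon)$ for $\tilde\epsilon\ge\epsilon$ --- a monotonicity property of $\Psi_0$ in $|\xi|$ that is not entirely obvious from the definition of the Rajput--Rosi\'nski exponent (the drift term inside the absolute value need not be monotone; this is the usual reason for passing to $\sup_{|c|\le1}\Psi_0(c\xi)$ in the literature). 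You instead partition the box into $N=\lceil\operatorname{Leb}_{n_1}((s,t])\rceil$ sub-boxes of volume at most $1$, observe that for each sub-box every $\lambda$ admissible for $d_{\Psi_0}(\psi,0)$ remains admissible after tensoring, and then sum up via the translation invariance and triangle inequality of the metric from Proposition \ref{Prop:IntegrableFunctions}. This buys you two things: the argument needs no monotonicity or scaling property of $\Psi_0$ beyond nonnegativity and $\Psi_0(0)=0$, and it yields the stronger quantitative conclusion that the map is Lipschitz with constant $N$, rather than merely sequentially continuous. Both proofs are valid; yours is the more robust of the two on the continuity step.
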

\begin{proof}
 Note that for all $\lambda>0$ we have
 \[
  \int_{\R^{n_1}\times\R^{n_2}}\Psi_0\bigg(\frac{\mathbbm{1}_{(s,t]}(r_1) \varphi(r_2)}{\lambda}\bigg)\,d(r_1,r_2)=\operatorname{Leb}_{n_1}((s,t]) \int_{\R^{n_2}}\Psi_0\bigg(\frac{\varphi(r_2)}{\lambda}\bigg)\,dr_2\;\;(\varphi\in L(\eta_{n_2}),\lambda>0).
 \]
Thus, $ \varphi\in L_0(\eta_{n_2})$ implies $\mathbbm{1}_{(s,t]}\otimes \varphi\in L_0(\eta_{n})$. Moreover, if $(\varphi_k)_{k\in\N}\subset L_0(\eta_{n_2})$ converges to $\varphi$, then for all $\epsilon>0$ we have
\[
 \int_{\R^{n_2}}\Psi_0\bigg(\frac{\varphi(r_2)-\varphi_k(r_2)}{\epsilon}\bigg)\,dr_2<\epsilon
\]
for $k\in\N$ large enough. If $\operatorname{Leb}_{n_1}((s,t])\leq 1$ this implies
\[
 \int_{\R^{n_1}\times\R^{n_2}}\Psi_0\bigg(\frac{\mathbbm{1}_{(s,t]}(r_1)(\varphi(r_2)-\varphi_k(r_2))}{\epsilon}\bigg)\,d(r_1,r_2)<\epsilon
\]
so that the continuity follows. If $\operatorname{Leb}_{n_1}((s,t])>1$ then we write $\tilde{\epsilon}=\operatorname{Leb}_{n_1}((s,t])\epsilon$ and obtain
\begin{align*}
  \int_{\R^{n_1}\times\R^{n_2}}\Psi_0\bigg(\frac{\mathbbm{1}_{(s,t]}(r_1)(\varphi(r_2)-\varphi_k(r_2))}{\tilde{\epsilon}}\bigg)\,d(r_1,r_2)&\leq \int_{\R^{n_1}\times\R^{n_2}}\Psi_0\bigg(\frac{\mathbbm{1}_{(s,t]}(r_1)(\varphi(r_2)-\varphi_k(r_2))}{\epsilon}\bigg)\,d(r_1,r_2)\\
  &<\operatorname{Leb}_{n_1}((s,t])\epsilon=\tilde{\epsilon}
\end{align*}
for $k$ large enough. Again, the continuity follows.
\end{proof}

\begin{lemma}\label{lemma:ProductIndicator}
 Let $s,t\in\R$ with $s< t$. Let $\eta$ be a L\'evy white noise in $\R^n$ with L\'evy triplet $(\gamma,\sigma^2,\nu)$. Then the mapping
 \[
   \varphi\mapsto \langle\eta(\omega), \mathbbm{1}_{(s,t]}\otimes \varphi \rangle
 \]
is an element of $\mathscr{S}'(\R^{n-1})$ for almost all $\omega\in\Omega$.
\end{lemma}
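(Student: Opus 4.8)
The plan is to recognize the map in the statement as induced by a generalized random process with values in $\mathscr{S}'(\R^{n-1})$, defined on the given probability space $\Omega$, and to produce it by combining the $L_0$-calculus of Proposition~\ref{Prop:IntegrableFunctions} with the regularization underlying the Bochner--Minlos theorem.

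First I would make the map precise. Set $n_1=1$, $n_2=n-1$ and let $\eta_{n-1}$ be a L\'evy white noise on $\R^{n-1}$ with the same L\'evy triplet $(\gamma,\sigma^2,\nu)$ as $\eta$. Since by Definition~\ref{Def:WhiteNoise} this triplet has positive absolute moments, Remark~\ref{Rem:Remark_Integrable_Functions}~(a) gives $\mathscr{S}(\R^{n-1})\subset L_0(\eta_{n-1})$, and Lemma~\ref{lemma:ProductIndicator_old} (applied with these $n_1,n_2$) shows that $\varphi\mapsto\mathbbm{1}_{(s,t]}\otimes\varphi$ maps $\mathscr{S}(\R^{n-1})$ into $L_0(\eta)$. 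Hence, by Proposition~\ref{Prop:IntegrableFunctions}~(c), $T\varphi:=\langle\eta,\mathbbm{1}_{(s,t]}\otimes\varphi\rangle$ is a well-defined element of $L_0(\Omega)$, and $T\colon\mathscr{S}(\R^{n-1})\to L_0(\Omega)$ is linear because the extension $\langle\eta,\cdot\rangle$ is linear on $L_0(\eta)$.

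Next I would compute the characteristic functional of $T$ and deduce its continuity. By Proposition~\ref{Prop:IntegrableFunctions}~(d) and Fubini,
\[
 C(\varphi):=\E\big[e^{iT\varphi}\big]=\exp\!\Big(\int_{\R^{n}}\Psi\big(\mathbbm{1}_{(s,t]}(r_1)\varphi(r_2)\big)\,d(r_1,r_2)\Big)=\exp\!\Big(\int_{\R^{n-1}}\big[(t-s)\Psi\big](\varphi(r))\,dr\Big),
\]
and $(t-s)\Psi$ is exactly the L\'evy exponent of the triplet $\big((t-s)\gamma,(t-s)\sigma^2,(t-s)\nu\big)$, whose infinitely divisible law still has positive absolute moments since multiplying $\nu$ by the constant $t-s$ does not affect finiteness of $\int_{|x|>1}|x|^{\epsilon}\,d\nu$. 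Thus, by Remark~\ref{Rem:CharFunctional}~(b), $C$ is a characteristic functional on $\mathscr{S}(\R^{n-1})$, in particular continuous there. Since $T$ is linear, $C(\xi\varphi_k)=\E[e^{i\xi T\varphi_k}]\to C(0)=1$ for every $\xi\in\R$ whenever $\varphi_k\to0$ in $\mathscr{S}(\R^{n-1})$, so L\'evy's continuity theorem gives $T\varphi_k\to0$ in probability; hence $T\colon\mathscr{S}(\R^{n-1})\to L_0(\Omega)$ is continuous.

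Finally I would regularize. The space $\mathscr{S}(\R^{n-1})$ is a nuclear Fr\'echet space, and $T$ is a continuous linear map from it into $L_0(\Omega)$ whose characteristic functional $C$ is positive definite (a consequence of linearity of $T$). By the regularization step underlying the Bochner--Minlos theorem (cf.~\cite{Minlos_1959,Gelfand_Vilenkin_1964}) there is a generalized random process $\zeta\colon\Omega\to\mathscr{S}'(\R^{n-1})$ with $\langle\zeta,\varphi\rangle=T\varphi$ almost surely for every $\varphi\in\mathscr{S}(\R^{n-1})$. Then for $\mathbb{P}$-almost every $\omega$ the mapping $\varphi\mapsto\langle\eta(\omega),\mathbbm{1}_{(s,t]}\otimes\varphi\rangle=\langle\zeta(\omega),\varphi\rangle$ lies in $\mathscr{S}'(\R^{n-1})$, which is the claim. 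I expect this last step to be the main obstacle: for each fixed $\varphi$ the identity $\langle\eta,\mathbbm{1}_{(s,t]}\otimes\varphi\rangle=\langle\zeta,\varphi\rangle$ holds only outside a $\varphi$-dependent null set, and since $\mathscr{S}(\R^{n-1})$ is uncountable one has to exploit its nuclearity --- factoring $T$ through a Hilbert space into which $\mathscr{S}(\R^{n-1})$ embeds in a Hilbert--Schmidt fashion --- to collapse all these null sets into a single one; the remaining points are routine bookkeeping with the quoted results.
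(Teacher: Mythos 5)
Your first two paragraphs are fine: the well-definedness of $T\varphi=\langle\eta,\mathbbm{1}_{(s,t]}\otimes\varphi\rangle$ via Lemma~\ref{lemma:ProductIndicator_old} and Proposition~\ref{Prop:IntegrableFunctions}, the computation of the characteristic functional, and the continuity of $T$ into $L_0(\Omega)$ via L\'evy's continuity theorem are all correct. The problem is that this only produces a \emph{cylindrical} object: for each fixed $\varphi$ you get a random variable $T\varphi$, i.e.\ an equivalence class in $L_0(\Omega)$, and the Minlos-type regularization then yields a process $\zeta$ with $\langle\zeta,\varphi\rangle=T\varphi$ almost surely \emph{for each fixed $\varphi$}. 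That proves the existence of an $\mathscr{S}'(\R^{n-1})$-valued modification, which is strictly weaker than the lemma: the lemma asserts that the specific mapping $\varphi\mapsto\langle\eta(\omega),\mathbbm{1}_{(s,t]}\otimes\varphi\rangle$ is a tempered distribution for almost every $\omega$, i.e.\ that there is a \emph{single} null set outside of which linearity and continuity in $\varphi$ hold simultaneously for all test functions. You correctly flag this as the main obstacle, but the fix you propose (nuclearity, Hilbert--Schmidt factorization) only enters the \emph{construction} of $\zeta$; it does not identify $\zeta(\omega)$ with the original pathwise mapping outside one null set. Any attempt to pass from the countably many exceptional sets (say for $\varphi$ in a countable dense subset of $\mathscr{S}(\R^{n-1})$) to all $\varphi$ requires precisely the pathwise continuity of $\varphi\mapsto\langle\eta(\omega),\mathbbm{1}_{(s,t]}\otimes\varphi\rangle$ that the lemma is supposed to establish, so the argument is circular at that point. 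Indeed, in your setup the expression $\langle\eta(\omega),\mathbbm{1}_{(s,t]}\otimes\varphi\rangle$ is never given a pointwise-in-$\omega$ meaning at all, so the asserted conclusion about a fixed $\omega$ is not reachable.

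The paper avoids this by arguing pathwise from the start. One shows that $\mathbbm{1}_{(s,t]}\in B^{r}_{p,p}(\R)$ for $r<1/p$, and then, via tensor-product and Bessel-potential embeddings together with Proposition~\ref{Prop:Weighted_Besov_Isomorphism}, that $\varphi\mapsto\mathbbm{1}_{(s,t]}\otimes\varphi$ is continuous from $\mathscr{S}(\R^{n-1})$ into $B^{r-\epsilon}_{1+\epsilon,1+\epsilon}(\R^n,\langle\cdot\rangle^{N})$ for suitable $r,\epsilon,N$. By Theorem~\ref{Thm:White_Noise_Regularity}, $\eta(\omega)$ lies almost surely in the dual space $B^{-3/4}_{1+\epsilon,1+\epsilon}(\R^n,\langle\cdot\rangle^{\rho})$ (for $\rho$ negative enough). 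On this single full-measure event the pairing $\langle\eta(\omega),\mathbbm{1}_{(s,t]}\otimes\varphi\rangle$ is defined by Besov duality and depends continuously on $\varphi$, which gives the claim with one uniform null set. If you want to salvage your route, you would have to supply exactly such a pathwise extension of $\eta(\omega)$ to a space containing all the tensors $\mathbbm{1}_{(s,t]}\otimes\varphi$ --- at which point the Bochner--Minlos machinery becomes unnecessary.
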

\begin{proof}
 First, we note that $\mathbbm{1}_{(s,t]}\in B^{r}_{p,p}(\R)$ for all $p\in(1,\infty)$, $r\in(-\infty,\frac{1}{p})$. One way to see this is to use the equivalent norm
 \[
  \|\mathbbm{1}_{(s,t]}\|_{B^{r}_{p,p}(\R)}\simeq \| \mathbbm{1}_{(s,t]}\|_{L_p(\R)}+\left(\int_{|h|\leq 1}\int_{\R} |h|^{-1-rp} |\mathbbm{1}_{(s,t]}(x+h)-\mathbbm{1}_{(s,t]}(x)|^p\,dx\,dh\right)^{1/p}\quad(r\in(0,1)).
 \]
For this equivalence, we refer to \cite[Section 2.2.2 and Section 2.5.12]{Triebel_1983}. Since
\begin{align*}
    \int_{\R} |\mathbbm{1}_{(s,t]}(x+h)-\mathbbm{1}_{(s,t]}(x)|^p\,dx=\int_{\R} |\mathbbm{1}_{(s,t]}(x+h)-\mathbbm{1}_{(s,t]}(x)|\,dx=2|h|,
\end{align*}
for $|h|\leq t-s$, we only have to check for which parameters we have
\[
 \int_{|h|\leq1} |h|^{-rp}\,dh<\infty.
\]
This is the case if and only if $r<\frac{1}{p}$. If we now take $p=1+\epsilon$ for $\epsilon>0$ small, then we have that
\begin{align*}
  &B^{r}_{p,p}(\R)\otimes_{\alpha_p} S^{(r,\ldots,r)}_{p}H_p(\R^{n-1}) \stackrel{\eqref{Eq:Bessel_potential_embedding2}}{\hookrightarrow} H^r_p(\R) \otimes_{\alpha_p} S^{(r,\ldots,r)}_{p}H_p(\R^{n-1})\\
  &\qquad\stackrel{\eqref{Eq:Bessel_potential_tensor}}{\cong }S^{(r,\ldots,r)}_pH(\R^{n})\stackrel{\eqref{Eq:Bessel_potential_embedding1}}{\hookrightarrow} H^r_p(\R^{n})\stackrel{\eqref{Eq:Bessel_potential_embedding3}}{\hookrightarrow} B^{r-\epsilon}_{p,p}(\R^n),
\end{align*}
so that the mapping
\begin{align}\label{Eq:Schwartz_Indicator_Tensor_1}
 \mathscr{S}(\R^{n-1})\to B^{r-\epsilon}_{1+\epsilon,1+\epsilon}(\R^n),\,\varphi\mapsto (\mathbbm{1}_{(s,t]}\otimes \varphi)\langle\,\cdot\,\rangle^{N/p}
\end{align}
is continuous for arbitrary $N>0$, where $\langle\xi\rangle^{N/p}$ is taken for $\xi\in\R^n$. We take the detour via the Bessel potential scale since the embedding \eqref{Eq:Bessel_potential_embedding1} seems to be not available in the literature for Besov spaces. For the Bessel potential scale this holds as $S^{0,\ldots,0}_pH(\R^n)=L_p(\R^n)$ so that Fourier multiplier techniques are directly available, see the references given for \eqref{Eq:Bessel_potential_embedding1}.\\
Combining \eqref{Eq:Schwartz_Indicator_Tensor_1} with Proposition \ref{Prop:Weighted_Besov_Isomorphism} shows that also
\begin{align}\label{Eq:Schwartz_Indicator_Tensor}
 \mathscr{S}(\R^n)\to B^{r-\epsilon}_{1+\epsilon,1+\epsilon}(\R^n,\langle\cdot\rangle^{N}),\,\varphi\to \mathbbm{1}_{(s,t]}\otimes \varphi
\end{align}
is continuous. 
Now we combine \eqref{Eq:Schwartz_Indicator_Tensor} with Theorem \ref{Thm:White_Noise_Regularity}. Since the Blumenthal-Getoor indices are not larger than $2$, it holds that
\[
 \mathbb{P}(\eta\in B^{-3/4}_{1+\epsilon,1+\epsilon}(\R^n,\langle\cdot\rangle^{\rho}))=1
\]
for $\epsilon\in(0,1)$ and $\rho$ negative enough. Therefore, taking $r-\epsilon>3/4$ and $N$ large enough in \eqref{Eq:Schwartz_Indicator_Tensor} shows that
\[
 \lim_{n\to\infty} \langle\eta(\omega),\mathbbm{1}_{(s,t]}\otimes\varphi_n\rangle =0
\]
for almost all $\omega\in\Omega$ and all $(\varphi_{n})_{n\in\N}\subset\mathscr{S}(\R^n)$ such that $\lim_{n\to\infty}\varphi_n=0$. Hence, $\eta(\omega)$ is indeed a tempered distribution.
\end{proof}

\begin{proposition}\label{Prop:WhiteNoiseIndicatorTensor}
  Let $n=n_1+n_2$ with $n_1,n_2\in\N$ and $s,t\in\R^{n_1}$, $s\leq t$. Let further $\eta$ be a L\'evy white noise on $\R^n$ with L\'evy triplet $(\gamma,\sigma^2,\nu)$ on the complete probability space $(\Omega,\mathcal{F},\mathbb{P})$. Then the mapping
  \[
   \eta_{(s,t]}\colon(\Omega,\mathcal{F},\mathbb{P})\to(\mathscr{S}'(\R^{n_2}),\mathcal{B}_c(\mathscr{S}'(\R^{n_2}))),\,\omega\mapsto [\varphi\mapsto\langle\eta(\omega),\mathbbm{1}_{(s,t]}\otimes\varphi\rangle]
  \]
is a modification of a L\'evy white noise with L\'evy triplet $\operatorname{Leb}_{n_1}((s,t])(\gamma,\sigma^2,\nu)$. 
\end{proposition}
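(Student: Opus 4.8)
The plan is to identify the candidate Lévy white noise with the process $\eta_{(s,t]}$ by comparing characteristic functionals, using the Bochner--Minlos theorem. First I would check that $\eta_{(s,t]}(\omega)$ is well-defined as a tempered distribution for $\mathbb{P}$-almost all $\omega$: this is precisely the content of Lemma~\ref{lemma:ProductIndicator} in the case $n_1=1$, and the same tensor-product argument (via Lemma~\ref{lemma:ProductIndicator_old}, the Bessel-potential embeddings \eqref{Eq:Bessel_potential_embedding1}--\eqref{Eq:Bessel_potential_tensor}, Proposition~\ref{Prop:Weighted_Besov_Isomorphism} and Theorem~\ref{Thm:White_Noise_Regularity}) extends to general $n_1$; alternatively one reduces to the case $n_1=1$ by iterating over the coordinates of $(s,t]$ and writing $\mathbbm{1}_{(s,t]}$ as a tensor product of one-dimensional indicators. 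On the null set where this fails one sets $\eta_{(s,t]}(\omega):=0$; by completeness of $(\Omega,\mathcal{F},\mathbb{P})$ this does not affect measurability. Measurability of $\omega\mapsto\eta_{(s,t]}(\omega)$ with respect to the cylindrical $\sigma$-field follows because for each fixed $\varphi\in\mathscr{S}(\R^{n_2})$ the map $\omega\mapsto\langle\eta(\omega),\mathbbm{1}_{(s,t]}\otimes\varphi\rangle$ is a random variable: indeed $\mathbbm{1}_{(s,t]}\otimes\varphi\in L_0(\eta)$ by Lemma~\ref{lemma:ProductIndicator_old} (it lies in $L_2(\R^n)$ with compact support, or use that $\varphi\in L_0(\eta_{n_2})$ implies $\mathbbm{1}_{(s,t]}\otimes\varphi\in L_0(\eta_n)$), so $\langle\eta,\mathbbm{1}_{(s,t]}\otimes\varphi\rangle$ is the value of the extended white noise from Proposition~\ref{Prop:IntegrableFunctions}, hence $\mathcal{F}$-measurable, and generators of $\mathcal{B}_c(\mathscr{S}'(\R^{n_2}))$ pull back to $\mathcal{F}$-sets.

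Next I would compute the characteristic functional. For $\varphi\in\mathscr{S}(\R^{n_2})\subset L_0(\eta_{n_2})$, Proposition~\ref{Prop:IntegrableFunctions}~(d) applied to the $\eta$-integrable function $\mathbbm{1}_{(s,t]}\otimes\varphi$ gives
\[
 \E\big[e^{i\langle\eta_{(s,t]},\varphi\rangle}\big]=\E\big[e^{i\langle\eta,\mathbbm{1}_{(s,t]}\otimes\varphi\rangle}\big]=\exp\bigg(\int_{\R^{n}}\Psi\big(\mathbbm{1}_{(s,t]}(x_1)\varphi(x_2)\big)\,d(x_1,x_2)\bigg).
\]
Since $\Psi(0)=0$, the inner integral collapses to $\int_{(s,t]}\int_{\R^{n_2}}\Psi(\varphi(x_2))\,dx_2\,dx_1=\operatorname{Leb}_{n_1}((s,t])\int_{\R^{n_2}}\Psi(\varphi(x_2))\,dx_2$, which is exactly $\exp$ applied to the Lévy exponent $\widetilde\Psi:=\operatorname{Leb}_{n_1}((s,t])\,\Psi$. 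One then checks that $\widetilde\Psi$ is the Lévy exponent of the triplet $\operatorname{Leb}_{n_1}((s,t])(\gamma,\sigma^2,\nu)$ — immediate from the definition of the exponent, since scaling $(\gamma,\sigma^2,\nu)$ by a positive constant $c$ scales each of the three terms in $\Psi$ by $c$ — and that the corresponding infinitely divisible law still has positive absolute moments (the scaled triplet has a Lévy measure which is a positive multiple of $\nu$ and the finiteness of $\E[|X|^\epsilon]$ is unaffected), so that Definition~\ref{Def:WhiteNoise} applies. Thus $\widehat{\mathbb{P}}_{\eta_{(s,t]}}$ coincides with the characteristic functional of a Lévy white noise with triplet $\operatorname{Leb}_{n_1}((s,t])(\gamma,\sigma^2,\nu)$ on $\mathscr{S}(\R^{n_2})$.

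Finally, by Bochner--Minlos two generalized random processes with the same characteristic functional have the same law; applying this, or rather the uniqueness in the Bochner--Minlos theorem together with the observation that $\langle\eta_{(s,t]},\varphi\rangle$ and $\langle\eta',\varphi\rangle$ have the same characteristic function (hence the same distribution) for every fixed $\varphi$, shows that $\eta_{(s,t]}$ is a modification of such a white noise $\eta'$. The main obstacle, and the point requiring the most care, is the well-definedness/tempered-distribution claim for general $n_1$ and the measurability bookkeeping: one must make sure that the map $\varphi\mapsto\langle\eta(\omega),\mathbbm{1}_{(s,t]}\otimes\varphi\rangle$ is genuinely continuous on $\mathscr{S}(\R^{n_2})$ for a.e.\ $\omega$ (so that it defines an element of $\mathscr{S}'(\R^{n_2})$, not merely a linear functional), which is where the Besov/Bessel-potential tensor estimates from the proof of Lemma~\ref{lemma:ProductIndicator} are reused; everything else is a short computation with $\Psi$.
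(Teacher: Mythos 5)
Your proposal is correct and follows the same overall skeleton as the paper: reduce to $n_1=1$ and iterate, use Lemma~\ref{lemma:ProductIndicator} for a.s.\ well-definedness as a tempered distribution, establish measurability, and then compute the characteristic functional (the computation, including the collapse of the integral via $\Psi(0)=0$ and the observation that scaling the triplet scales the exponent, is exactly the paper's). The one place where you genuinely diverge is the measurability step, which is where the paper spends most of its effort: it fixes a cylinder set with \emph{open} base $B$, approximates $\mathbbm{1}_{(s,t]}\otimes\varphi_j$ by test functions in $L_0(\eta)$, passes to an a.s.\ convergent subsequence, and then exhibits $\eta_{(s,t]}^{-1}(C)$ explicitly as $\bigcup_l\liminf_k A_{k,l}\cup K$ with an exceptional null set $K$ absorbed by completeness. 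You instead reduce measurability to the statement that, for each fixed $\varphi$, the scalar map $\omega\mapsto\langle\eta(\omega),\mathbbm{1}_{(s,t]}\otimes\varphi\rangle$ is a random variable, and then pull back generators of $\mathcal{B}_c(\mathscr{S}'(\R^{n_2}))$; this is shorter and perfectly valid, \emph{provided} you justify that the pointwise value $\langle\eta(\omega),\mathbbm{1}_{(s,t]}\otimes\varphi\rangle$ produced by the Besov duality of Lemma~\ref{lemma:ProductIndicator} agrees $\mathbb{P}$-a.s.\ with the $L_0(\eta)$-extension value of Proposition~\ref{Prop:IntegrableFunctions} (choose approximating test functions converging in both topologies and extract an a.s.\ convergent subsequence); completeness then transfers measurability across the null set. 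This identification is exactly what the paper's longer bookkeeping is implicitly handling, so you should make it explicit, but it is a routine fix rather than a gap. Finally, note that once the characteristic functional is computed, $\eta_{(s,t]}$ (after the null-set adjustment) \emph{is} by Definition~\ref{Def:WhiteNoise} a L\'evy white noise with the scaled triplet; the appeal to Bochner--Minlos uniqueness of laws is not needed and by itself would only give equality in law, not the modification property.
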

\begin{proof}
 It suffices to show the assertion for $n_1=1$. The general assertion then follows by iteration. So let $n_1=1$. Then Lemma \ref{lemma:ProductIndicator} shows that, $\eta_{(s,t]}(\omega)$ is indeed a tempered distribution almost surely. Thus, after changing it on a set of measure $0$, we can assume that it is a $\mathscr{S}'(\R^{n-1})$-valued mapping. For the measurability, it suffices to show that the preimages of cylindrical sets of the form
 \[
  C:=\{u\in\mathscr{S}'(\R^{n-1}):(\langle u,\varphi_1\rangle,\ldots, \langle u,\varphi_N\rangle)\in B\}
 \]
for some $N\in\N$ and some open set $B\subset\R^N$ under $\eta_{(s,t]}$ are elements of $\mathcal{F}$. So let $C$ be such a set. By Proposition \ref{Prop:IntegrableFunctions} and Lemma \ref{lemma:ProductIndicator_old}, we can take sequences $(\psi_{j,k})_{k\in\N}\subset\mathscr{D}(\R^n)$, $j=1,\ldots,N$ such that $\psi_{j,k}\to\mathbbm{1}_{(s,t]}\otimes\varphi_j$ in $L_0(\eta)$ as $k\to\infty$. Hence, we have that
\[
 \langle\eta,\psi_{j,k}\rangle\to\langle\eta,\mathbbm{1}_{(s,t]}\otimes\varphi_j\rangle\quad\text{in probability as }k\to\infty.
\]
By taking subsequences, we may without loss of generality assume that the convergence is also almost surely. Let $\tilde{K}\in\mathcal{F}$ be the set on which there is no pointwise convergence and 
\[
 K:=\tilde{K}\cap \eta_{(s,t]}^{-1}(C).
\]
Since the probability space is complete, it follows that $K\in\mathcal{F}$. Now we define
\begin{align*}
 B_l&:=\{x\in B\colon \operatorname{dist}(x,\R^n\setminus B)>\tfrac{1}{l}\},\\
 C_l&:=\{u\in\mathscr{S}'(\R^{n-1}):(\langle u,\varphi_1\rangle,\ldots, \langle u,\varphi_N\rangle)\in B_l\},\\
 \tilde{C}_l&:=\{u\in\mathscr{S}'(\R^{n-1}):(\langle u,\varphi_1\rangle,\ldots, \langle u,\varphi_N\rangle)\in \overline{B_l}\}
\end{align*}
for $l\in\N$. Note that we have 
\begin{align}\label{eq:NonConvergingPointsInclusion}
 K\subset \eta_{(s,t]}^{-1}(C)=\bigcup_{l\in\N}\eta_{(s,t]}^{-1}(C_l)=\bigcup_{l\in\N}\eta_{(s,t]}^{-1}(\tilde{C}_l).
\end{align}
Let further
\[
 A_{k,l}:=(\langle\eta,\psi_{1,k}\rangle,\ldots,\langle\eta,\psi_{N,k}\rangle)^{-1}(B_l)\setminus \tilde{K} \subset\Omega.
\]
Note that $A_{k,l}\in\mathcal{F}$ for all $k,l\in\N$ since $\eta$ is a a generalized random process. By construction, we have that $\liminf_{k\to\infty} A_{k,l}\in\mathcal{F}$ and that it consists of all $\omega\in\Omega$ such that
\[
 (\langle\eta(\omega),\psi_{1,k}\rangle,\ldots,\langle\eta(\omega),\psi_{N,k}\rangle)\to  (\langle\eta(\omega),\mathbbm{1}_{(s,t]}\otimes\varphi_1\rangle,\ldots,\langle\eta(\omega),\mathbbm{1}_{(s,t]}\otimes\varphi_N\rangle)\quad\text{as }k\to\infty
\]
and such that $ (\langle\eta(\omega,\psi_{1,k}\rangle,\ldots,\langle\eta(\omega),\psi_{N,k}\rangle)\in B_l$ for $k\in\N$ large enough. In particular, for $\omega\in\liminf_{k\to\infty} A_{k,l}$ it holds that $(\langle\eta(\omega),\mathbbm{1}_{(s,t]}\otimes\varphi_1\rangle,\ldots,\langle\eta(\omega),\mathbbm{1}_{(s,t]}\otimes\varphi_N\rangle)\in \overline{B}_l$ and thus
\[
 \liminf_{k\to\infty} A_{k,l}\subset (\langle \eta_{(s,t]},\varphi_1\rangle,\ldots,\langle \eta_{(s,t]},\varphi_N\rangle)^{-1}(\overline{B}_l)=\eta_{(s,t]}^{-1}(\tilde{C}_l).
\]
Together with \eqref{eq:NonConvergingPointsInclusion} this yields
\begin{align}\label{eq:LiminfIncluded}
 \bigcup_{l\in\N}\liminf_{k\to\infty} A_{k,l}\cup K\subset\eta_{(s,t]}^{-1}(C).
\end{align}
For the converse inclusion let $\omega\in \eta_{(s,t]}^{-1}(C)$ so that
\[
(\langle\eta(\omega),\mathbbm{1}_{(s,t]}\otimes\varphi_1\rangle,\ldots,\langle\eta(\omega),\mathbbm{1}_{(s,t]}\otimes\varphi_N\rangle)\in B.
\]
Since $B$ is open, there is an $l\in\N$ such that
\[
(\langle\eta(\omega),\mathbbm{1}_{(s,t]}\otimes\varphi_1\rangle,\ldots,\langle\eta(\omega),\mathbbm{1}_{(s,t]}\otimes\varphi_N\rangle)\in B_l.
\]
If $\omega\notin\tilde{K}$, then
\[
 (\langle\eta(\omega),\psi_{1,k}\rangle,\ldots,\langle\eta(\omega),\psi_{N,k}\rangle)\to  (\langle\eta(\omega),\mathbbm{1}_{(s,t]}\otimes\varphi_1\rangle,\ldots,\langle\eta(\omega),\mathbbm{1}_{(s,t]}\otimes\varphi_N\rangle)\quad\text{as }k\to\infty
\]
and since $B_l$ is open, it holds that 
\[
 (\langle\eta(\omega),\psi_{1,k}\rangle,\ldots,\langle\eta(\omega),\psi_{N,k}\rangle)\in B_l
\]
for $k$ large enough. Hence, it follows that $\omega\in A_{k,l}$ for $k$ large enough and therefore $\omega\in\liminf_{k\to\infty} A_{k,l}$. If in turn $\omega\in\tilde{K}$, then $\omega\in\tilde{K}\cap  \eta_{(s,t]}^{-1}(C)=K$. Hence, it follows that
\begin{align}\label{eq:IncludedInLiminf}
  \eta_{(s,t]}^{-1}(C)\subset \bigcup_{l\in\N}\liminf_{k\to\infty} A_{k,l}\cup K
\end{align}
Together with \eqref{eq:LiminfIncluded} it now follows that
\[
  \eta_{(s,t]}^{-1}(C)= \bigcup_{l\in\N}\liminf_{k\to\infty} A_{k,l}\cup K\in\mathcal{F}
\]
so that $\eta_{(s,t]}$ is indeed a generalized random process.
Finally, we show that $\eta_{(s,t]}$ is even a L\'evy white noise with L\'evy triplet $\operatorname{Leb}_{n_1}((s,t])(\gamma,\sigma^2,\nu)$ by simply computing its characteristic functional:  Let $\Psi$ be the L\'evy exponent of $\eta$. Then we obtain
 \begin{align*}
  \widehat{\mathbb{P}}_{\eta(\mathbbm{1}_{(s,t]}\otimes (\,\cdot\,))}(\varphi)&=\E[ e^{i\eta(\mathbbm{1}_{(s,t]}\otimes \varphi)}]=\exp\bigg(\int_{\R^n}\Psi([\mathbbm{1}_{(s,t]}\otimes \varphi](r))\,dr\bigg)\\
  &=\exp\bigg(\int_{\R^n} i\gamma [\mathbbm{1}_{(s,t]}\otimes \varphi](r)-\frac{\sigma^2[\mathbbm{1}_{(s,t]}^2\otimes \varphi^2](r)}{2}\\
  &\;\;+\int_{\R\setminus\{0\}} e^{ix[\mathbbm{1}_{(s,t]}\otimes \varphi](r)}-1+ix[\mathbbm{1}_{(s,t]}\otimes \varphi](r) \mathbbm{1}_{|x|\leq1}\,d\nu(x)\,dr\bigg)\\
    &=\exp\bigg(\op{Leb}_{1}((s,t])\int_{\R^{n-1}}i\gamma \varphi(r)-\frac{\sigma^2\varphi^2(r)}{2}\\
  &\;\;+\int_{\R\setminus\{0\}} e^{ix \varphi(r)}-1+ix\varphi(r) \mathbbm{1}_{|x|\leq1}\,d\nu(x)\,dr\bigg)\\
  &=\exp\bigg(\op{Leb}_{1}((s,t])\int_{\R^{n-1}}\Psi(\varphi(r))\,dr\bigg).
 \end{align*}
 Altogether, we obtain the assertion.
\end{proof}

\begin{lemma}\label{Lemma:BesovBorelInCylindrical}
 Let $T>0$, $\bar{s}=(s_1,\ldots,s_n)\in\R^n$, $s,\rho\in\R$, $p,q\in(1,\infty)$ and let $p',q'\in(1,\infty)$ be the conjugated H\"older indices.
 \begin{enumerate}[(a)]
  \item \label{Lemma:BesovBorelInCylindrical_Norming}There is a sequence $(\psi_k)_{k\in\N}\subset\mathscr{S}(\R^n)$ with $\|\psi\|_{B^{-s}_{p',q'}(\R^n,\langle\,\cdot\,\rangle^{\rho(1-p')})}=1$ such that
  \[
   \|u\|_{B^{s}_{p,q}(\R^n,\langle\,\cdot\,\rangle^{\rho})}=\sup_{k\in\N}|\langle u,\psi_k\rangle|
  \]
    for all $u\in B^{s}_{p,q}(\R^n,\langle\,\cdot\,\rangle^{\rho})$.
 \item There is a sequence $(\psi_k)_{k\in\N}\subset\mathscr{S}_0([0,T]^n)$ with $\|\psi\|_{S^{-\bar{s}}_{p',p'}B([0,T]^n)}=1$ such that
  \[
   \|u\|_{S^{\bar{s}}_{p,p}B([0,T]^n)}=\sup_{k\in\N}|\langle u,\psi_k\rangle|
  \]
    for all $u\in S^{\bar{s}}_{p,p}B([0,T]^n)$.
 \item \label{Lemma:BesovBorelInCylindrical_sigma_field} The Borel $\sigma$-field of $B^{s}_{p,q}(\R^n,\langle\,\cdot\,\rangle^{\rho})$ is contained in $\mathcal{B}_c(\mathscr{S}'(\R^n))$.
 \item The Borel $\sigma$-field of $S^s_{p,p}B([0,T]^n)$ is contained in $\mathcal{B}_c(\mathscr{S}'([0,T]^n))$.
 \end{enumerate}
\end{lemma}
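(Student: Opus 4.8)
The plan is to obtain (a) and (b) by a soft duality argument, and then to deduce (c) and (d) by combining (a), (b) with the fact that the space in question is a $\mathcal{B}_c$-measurable subset of $\mathscr{S}'(\R^n)$, respectively $\mathscr{S}'([0,T]^n)$.

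For (a), recall from Proposition~\ref{Prop:DualBesov} that $X:=B^{s}_{p,q}(\R^n,\langle\,\cdot\,\rangle^{\rho})$ has dual $X'=B^{-s}_{p',q'}(\R^n,\langle\,\cdot\,\rangle^{\rho(1-p')})$, with the pairing given by the continuous extension of the $\mathscr{S}'$--$\mathscr{S}$ pairing. Since $p',q'<\infty$, the space $X'$ is separable and $\mathscr{S}(\R^n)$ is dense in it; the density reduces, via Proposition~\ref{Prop:Weighted_Besov_Isomorphism} and the fact that multiplication by $\langle\,\cdot\,\rangle^{c}$ ($c\in\R$) is a bijection of $\mathscr{S}(\R^n)$, to the classical unweighted statement. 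Hence one can pick a countable set $(\psi_k)_{k\in\N}\subset\mathscr{S}(\R^n)$ with $\|\psi_k\|_{X'}=1$ that is dense in the unit sphere of $X'$ (take a countable dense subset of that sphere, approximate each of its elements in $X'$ by a Schwartz function, and renormalize). By Hahn--Banach, $\|u\|_{X}=\sup\{|\langle u,\phi\rangle|:\phi\in X',\ \|\phi\|_{X'}\le1\}=\sup\{|\langle u,\phi\rangle|:\|\phi\|_{X'}=1\}$ for every $u\in X$, and since $\phi\mapsto\langle u,\phi\rangle$ is continuous on $X'$, this supremum over the sphere equals $\sup_{k\in\N}|\langle u,\psi_k\rangle|$. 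Part (b) is proved in exactly the same way, now using Proposition~\ref{Prop:DualBesovDomain} for the duality $(S^{\bar s}_{p,p}B([0,T]^n))'=S^{-\bar s}_{p',p',0}B([0,T]^n)$ and the fact that $\mathscr{S}_0([0,T]^n)$ is dense in $S^{-\bar s}_{p',p',0}B([0,T]^n)$ by definition; the resulting norming sequence $(\psi_k)$ then lies in $\mathscr{S}_0([0,T]^n)$.

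For (c) I would first show that $X$ is a $\mathcal{B}_c$-measurable subset of $\mathscr{S}'(\R^n)$. Fix a smooth dyadic resolution of unity. For each $\bar k$ and each $x\in\R^n$ one has $(S_{\bar k}u)(x)=\langle u,g_{\bar k,x}\rangle$ for a suitable $g_{\bar k,x}\in\mathscr{S}(\R^n)$ depending continuously on $x$, so $u\mapsto (S_{\bar k}u)(x)$ is $\mathcal{B}_c$-measurable while $x\mapsto (S_{\bar k}u)(x)$ is continuous for fixed $u$; thus $(u,x)\mapsto (S_{\bar k}u)(x)$ is jointly measurable, and by Tonelli's theorem $u\mapsto\|S_{\bar k}u\|_{L_p(\R^n,\langle\,\cdot\,\rangle^{\rho})}$ and then $u\mapsto\|u\|_{B^{s}_{p,q}(\R^n,\langle\,\cdot\,\rangle^{\rho})}\in[0,\infty]$ are $\mathcal{B}_c$-measurable. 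As $X$ is exactly the finiteness set of the last map, $X\in\mathcal{B}_c(\mathscr{S}'(\R^n))$. Secondly, the Borel $\sigma$-field of $X$ coincides with $\sigma(\langle\,\cdot\,,\psi_k\rangle:k\in\N)$ for the norming sequence from (a): the functionals $\langle\,\cdot\,,\psi_k\rangle$ are continuous, and conversely every closed ball $\{x\in X:\|x-x_0\|_X\le r\}=\bigcap_{k\in\N}\{x\in X:|\langle x,\psi_k\rangle-\langle x_0,\psi_k\rangle|\le r\}$ lies in $\sigma(\langle\,\cdot\,,\psi_k\rangle:k)$ because $(\psi_k)$ norms $X$, and by separability of $X$ every open subset of $X$ is a countable union of such balls. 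Consequently each $A\in\mathcal{B}(X)$ has the form $A=X\cap\{u\in\mathscr{S}'(\R^n):(\langle u,\psi_k\rangle)_{k\in\N}\in B\}$ for some Borel set $B\subset\R^{\N}$ (or $\C^{\N}$), and both factors on the right belong to $\mathcal{B}_c(\mathscr{S}'(\R^n))$; this is (c). Part (d) runs along the same lines with the sequence from (b), the only extra ingredient being the $\mathcal{B}_c$-measurability of $Y:=S^{\bar s}_{p,p}B([0,T]^n)$ inside $\mathscr{S}'([0,T]^n)$; for this one uses a bounded linear extension operator $\mathcal{E}\colon S^{\bar s}_{p,p}B([0,T]^n)\to S^{\bar s}_{p,p}B(\R^n)$ — which exists for the cuboid, e.g. as a tensor product of one-dimensional Rychkov-type extension operators or in combination with Corollary~\ref{Cor:Isomorphies_Dominating_Mixed_Domain} — taken to be the restriction of a $\mathcal{B}_c$-measurable linear right inverse $\widetilde{\mathcal{E}}$ of the restriction map, so that $u\in Y$ iff $\|\widetilde{\mathcal{E}}u\|_{S^{\bar s}_{p,p}B(\R^n)}<\infty$, which depends $\mathcal{B}_c$-measurably on $u$ by the argument just given on $\R^n$.

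The main obstacle is precisely the measurability of the space as a subset of $\mathscr{S}'$. On $\R^n$ this is the routine, but genuinely non-soft, measurability of the Littlewood--Paley quasi-norm; on the cuboid the additional difficulty is that the space is defined by restriction, which forces one to invoke a concrete measurable extension operator rather than the ``$\sup_k|\langle\,\cdot\,,\psi_k\rangle|$'' formula from (a) and (b). The latter is genuinely insufficient here: a linear functional that is bounded on a countable dense subset of the unit sphere of $X'$ need not be bounded on $X'$, so $\{u:\sup_k|\langle u,\psi_k\rangle|\le r\}$ is in general strictly larger than the closed ball of $X$; thus one cannot bypass the explicit Littlewood--Paley / extension-operator description of the norm.
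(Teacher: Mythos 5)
Your parts (a) and (b) follow the same path as the paper's proof: separability of the weighted, respectively restricted, Besov space, a norming sequence in the unit sphere of the dual (the paper cites \cite[Proposition B.1.10]{HvNVW16} where you invoke Hahn--Banach and a countable dense subset of the sphere directly), and approximation of that sequence in the dual norm by elements of $\mathscr{S}(\R^n)$, respectively $\mathscr{S}_0([0,T]^n)$, via Propositions \ref{Prop:DualBesov} and \ref{Prop:DualBesovDomain}. For (c) and (d) you take a genuinely different route. The paper writes an open ball of $X=B^{s}_{p,q}(\R^n,\langle\,\cdot\,\rangle^{\rho})$ as $\bigcup_{m}\bigcap_{k}\{u\in\mathscr{S}'(\R^n):|\psi_k(f-u)|\le r-\tfrac{1}{m}\}$ and then drops the intersection with $X$, asserting that $\sup_k|\psi_k(f-u)|<\infty$ already forces $u\in X$ ``by part (a)''. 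You instead prove $X\in\mathcal{B}_c(\mathscr{S}'(\R^n))$ directly -- joint measurability of $(u,x)\mapsto (S_ku)(x)$ together with Tonelli makes the Littlewood--Paley quasi-norm a $\mathcal{B}_c$-measurable $[0,\infty]$-valued function whose finiteness set is $X$ -- and then keep the intersection with $X$ in the description of the balls. Your version is longer, but it addresses a point the paper glosses over: part (a) yields $\|u\|_X=\sup_k|\langle u,\psi_k\rangle|$ only for $u\in X$, and, as you correctly observe, a tempered distribution that is bounded on a countable norming subset of the unit sphere of $X'$ need not lie in $X$, since boundedness on a dense subset of the sphere does not transfer to the whole sphere without continuity with respect to the $X'$-topology, which a general element of $\mathscr{S}'(\R^n)$ does not have. (The paper's step could be repaired by choosing the $\psi_k$ dense in $\{\phi\in\mathscr{S}(\R^n):\|\phi\|_{X'}=1\}$ for the Fr\'echet topology of $\mathscr{S}(\R^n)$ and using reflexivity, but that is not what its construction in (a) delivers.) The one soft spot on your side is (d): you need $S^{\bar s}_{p,p}B([0,T]^n)$ to be a $\mathcal{B}_c$-measurable subset of $\mathscr{S}'([0,T]^n)$ and you delegate this to a ``$\mathcal{B}_c$-measurable linear extension operator'' whose existence you assert rather than verify; this is plausible (a tensorized Rychkov-type universal extension operator is built from countably many reflections and convolutions, each acting cylindrically), but it is the one step you would still have to write out in full.
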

\begin{proof}
 \begin{enumerate}[(a)]
  \item Since $B^{s}_{p,q}(\R^n)$ is separable (see for example \cite[Section 2.5.5, Remark 1]{Triebel_1983}) and since the spaces $B^{s}_{p,q}(\R^n,\langle\,\cdot\,\rangle^{\rho})$ and $B^{s}_{p,q}(\R^n)$ are isomorphic by Proposition \ref{Prop:Weighted_Besov_Isomorphism}, it follows that also $B^{s}_{p,q}(\R^n,\langle\,\cdot\,\rangle^{\rho})$ is separable. Hence, it follows from \cite[Proposition B.1.10]{HvNVW16} together with Proposition \ref{Prop:DualBesov} that there is a sequence $(\varphi_k)_{k\in\N}\subset B^{-s}_{p',q'}(\R^n,\langle\,\cdot\,\rangle^{\rho(1-p')})$ with $\|\varphi_k\|_{B^{-s}_{p',q'}(\R^n,\langle\,\cdot\,\rangle^{\rho(1-p')})}=1$ such that
  \[
   \| u \|_{B^{s}_{p,q}(\R^n,\langle\,\cdot\,\rangle^{\rho})}=\sup_{k\in\N}|\langle u,\varphi_k\rangle|.
  \]
Since $f\mapsto \langle\,\cdot\,\rangle^{-\rho/p}f$ leaves $\mathscr{S}(\R^n)$ invariant and since it is an isomorphism between $B^{s'}_{p',q'}(\R^n,\langle\,\cdot\,\rangle^{\rho(1-p')})$ and $B^{-s}_{p',q'}(\R^n)$, it follows from the density of $\mathscr{S}(\R^n)$ in $B^{-s}_{p',q'}(\R^n)$ (see for example \cite[Section 2.3.3]{Triebel_1983}) that we have the dense embedding
\[
 \mathscr{S}(\R^n)\stackrel{d}{\hookrightarrow}B^{s'}_{p',q'}(\R^n,\langle\,\cdot\,\rangle^{\rho(1-p')}).
\]
Therefore, there are sequences $(\varphi_{k,l})_{l\in\N}\subset \mathscr{S}(\R^n)$ with $\|\varphi_{k,l}\|_{B^{-s}_{p',q'}(\R^n,\langle\,\cdot\,\rangle^{\rho(1-p')})}=1$ such that $\varphi_{k,l}\to\varphi_k$ as $l\to\infty$. Thus, we obtain
  \[
   \| u \|_{B^{s}_{p,q}(\R^n,\langle\,\cdot\,\rangle^{\rho})}=\sup_{k,l\in\N}|\langle u,\varphi_{k,l}\rangle|.
  \]
Since $\N^2$ is countable, we can rename the functions and obtain the asserted sequence $(\psi_k)_{k\in\N}$.
\item The proof is almost the same as the one of Part (\ref{Lemma:BesovBorelInCylindrical_Norming}). One just has to use Proposition \ref{Prop:DualBesovDomain} instead of Proposition \ref{Prop:DualBesov}.
\item Since $B^{s}_{p,q}(\R^n,\langle\,\cdot\,\rangle^{\rho})$ is separable, its Borel $\sigma$-field is generated by the open balls. Hence, it suffices to show that for all $f\in B^{s}_{p,q}(\R^n,\langle\,\cdot\,\rangle^{\rho})$ and all $r>0$ we have $B(f,r)\in\mathcal{B}_c(\mathscr{S}'(\R^n))$. Now we use part (\ref{Lemma:BesovBorelInCylindrical_Norming}). Then we obtain that
\begin{align*}
   B(f,r)&=\bigcup_{m\in\N}\overline{B(f,r-\tfrac{1}{m})}
   = \bigcup_{m\in\N}\bigcap_{k\in\N}\{u\in B^{s}_{p,q}(\R^n,\langle\,\cdot\,\rangle^{\rho}): |\psi_{k}(f-u)|\leq r-\tfrac{1}{m} \}\\
   &= B^{s}_{p,q}(\R^n,\langle\,\cdot\,\rangle^{\rho})\cap \bigcup_{m\in\N}\bigcap_{k\in\N}\{u\in \mathscr{S}'(\R^n) : |\psi_{k}(f-u)|\leq r-\tfrac{1}{m} \}\\
   &=\bigcup_{m\in\N}\bigcap_{k\in\N}\{u\in \mathscr{S}'(\R^n) : |\psi_{k}(f-u)|\leq r-\tfrac{1}{m} \} \in \mc{B}_c(\ms{S}'(\R^n))
  \end{align*}
    In the last step we used that a tempered distribution $u_0$ is an element of $B^{s}_{p,q}(\R^n,\langle\,\cdot\,\rangle^{\rho})$ if $\|u_0\|_{B^{s}_{p,q}(\R^n,\langle\,\cdot\,\rangle^{\rho})}<\infty$. By part (\ref{Lemma:BesovBorelInCylindrical_Norming}) this is satisfied if 
    \[
     u_0\in \bigcap_{k\in\N}\{u\in \mathscr{S}'(\R^n) : |\psi_{k}(f-u)|\leq r-\tfrac{1}{m} \} .
    \]
This yields the assertion.
\item The proof is almost the same as the one of part (\ref{Lemma:BesovBorelInCylindrical_sigma_field}).
 \end{enumerate}
\end{proof}

\begin{lemma}\label{Prelim:Lemma:BesovValuedProcess}
 Let $\eta$ be a L\'evy white noise. For $t_2\geq t_1\geq 0$ let again
 \[
  \eta_{(t_1,t_2]}\colon\Omega\to\mathscr{S}'(\R^{n-1}), \omega\mapsto \langle \eta(\omega),\mathbbm{1}_{(t_1,t_2]}\otimes(\,\cdot\,)\rangle.
 \]
Suppose that for all $t\geq0$ the mapping $\eta_{(0,t]}$ takes values in the Besov space $B^{s}_{p,q}(\R^{n-1},\langle\,\cdot\,\rangle^{\rho})$ for fixed parameters $s,\rho\in\R$ and $1<p,q<\infty$. Let $B^{s}_{p,q}(\R^{n-1},\langle\,\cdot\,\rangle^{\rho})$ be endowed with its Borel $\sigma$-field. Then $(\eta_{(0,t]})_{t\geq0}$ is a $B^{s}_{p,q}(\R^{n-1},\langle\,\cdot\,\rangle^{\rho})$-valued stochastic process with stationary and independent increments. The same assertion holds if $\eta$ is restricted to $[0,T]^n$ and if $B^{s}_{p,q}(\R^{n-1},\langle\,\cdot\,\rangle^{\rho})$ is replaced by $S^{\bar{s}'}_{p,p}B([0,T]^{n-1})$ for some $\bar{s}'=(s_2,\ldots,s_n)\in\R^{n-1}$.
\end{lemma}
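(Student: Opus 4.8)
The plan is to deduce all three claims --- that $(\eta_{(0,t]})_{t\geq0}$ is a well-defined stochastic process with values in $V:=B^{s}_{p,q}(\R^{n-1},\langle\,\cdot\,\rangle^{\rho})$, that it has stationary increments, and that it has independent increments --- by combining Proposition \ref{Prop:WhiteNoiseIndicatorTensor}, Lemma \ref{Lemma:BesovBorelInCylindrical} and the characteristic function formula in Proposition \ref{Prop:IntegrableFunctions}. I would carry out the argument for $V$ as above; the case where $\eta$ is restricted to $[0,T]^n$ and $V$ is replaced by $S^{\bar{s}'}_{p,p}B([0,T]^{n-1})$ is treated verbatim, using parts (b) and (d) of Lemma \ref{Lemma:BesovBorelInCylindrical} and Proposition \ref{Prop:DualBesovDomain} in place of parts (\ref{Lemma:BesovBorelInCylindrical_Norming}), (\ref{Lemma:BesovBorelInCylindrical_sigma_field}) and Proposition \ref{Prop:DualBesov}, and noting that Proposition \ref{Prop:WhiteNoiseIndicatorTensor} also applies to the restriction of $\eta$ to $[0,T]^n$.

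For measurability, I would argue that by Proposition \ref{Prop:WhiteNoiseIndicatorTensor} (with $n_1=1$, $n_2=n-1$) each $\eta_{(0,t]}$ is, after a harmless modification on a null set (the probability space being complete), a generalized random process, hence measurable into $(\mathscr{S}'(\R^{n-1}),\mathcal{B}_c(\mathscr{S}'(\R^{n-1})))$; since by hypothesis it takes values in $V$ and the Borel $\sigma$-field of $V$ is contained in $\mathcal{B}_c(\mathscr{S}'(\R^{n-1}))$ by Lemma \ref{Lemma:BesovBorelInCylindrical} (\ref{Lemma:BesovBorelInCylindrical_sigma_field}), preimages of Borel subsets of $V$ lie in $\mathcal{F}$, so $(\eta_{(0,t]})_{t\geq 0}$ is a $V$-valued stochastic process. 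For stationary increments, I would use that $\mathbbm{1}_{(0,t]}-\mathbbm{1}_{(0,s]}=\mathbbm{1}_{(s,t]}$ together with the linearity of $\eta$ on $L_0(\eta)$ (Proposition \ref{Prop:IntegrableFunctions}) to get $\langle\eta_{(0,t]}-\eta_{(0,s]},\varphi\rangle=\langle\eta_{(s,t]},\varphi\rangle$ almost surely for each $\varphi\in\mathscr{S}(\R^{n-1})$; testing against a countable dense set of Schwartz functions and using continuity of tempered distributions upgrades this to $\eta_{(0,t]}-\eta_{(0,s]}=\eta_{(s,t]}$ in $\mathscr{S}'(\R^{n-1})$ almost surely. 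By Proposition \ref{Prop:WhiteNoiseIndicatorTensor} the latter is a modification of a L\'evy white noise with triplet $(t-s)(\gamma,\sigma^2,\nu)$, so its law on $\mathcal{B}_c(\mathscr{S}'(\R^{n-1}))$ --- being determined by the characteristic functional --- and hence its restriction to the Borel $\sigma$-field of $V$, depends only on $t-s$.

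The heart of the argument is the independence of the increments. For pairwise disjoint intervals $(s_1,t_1],\ldots,(s_N,t_N]\subset[0,\infty)$ I would take the norming sequence $(\psi_k)_{k\in\N}\subset\mathscr{S}(\R^{n-1})$ from Lemma \ref{Lemma:BesovBorelInCylindrical} (\ref{Lemma:BesovBorelInCylindrical_Norming}); since $V$ is separable and $\|u\|_V=\sup_k|\langle u,\psi_k\rangle|$, each $u\mapsto\langle u,\psi_k\rangle$ is continuous on $V$ (indeed $\mathscr{S}(\R^{n-1})\subset V'=B^{-s}_{p',q'}(\R^{n-1},\langle\,\cdot\,\rangle^{\rho(1-p')})$ by Proposition \ref{Prop:DualBesov}) and the Borel $\sigma$-field of $V$ equals $\sigma(\langle\,\cdot\,,\psi_k\rangle:k\in\N)$. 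Thus the cylinder events built from finitely many of these functionals form, for each $i$, a $\pi$-system generating $\sigma(\eta_{(s_i,t_i]})$, and by the Dynkin $\pi$-$\lambda$ theorem it suffices to show that for all $k_1,\ldots,k_M\in\N$ the $\R^M$-valued random vectors $Z_i:=(\langle\eta,\mathbbm{1}_{(s_i,t_i]}\otimes\psi_{k_1}\rangle,\ldots,\langle\eta,\mathbbm{1}_{(s_i,t_i]}\otimes\psi_{k_M}\rangle)$, $i=1,\ldots,N$, are jointly independent. This I would verify via characteristic functions: for $\lambda_i\in\R^M$ set $\Phi_i:=\sum_{j=1}^M(\lambda_i)_j\psi_{k_j}\in\mathscr{S}(\R^{n-1})$; linearity of $\eta$ on $L_0(\eta)$ and Lemma \ref{lemma:ProductIndicator_old} give $\sum_{i}\langle\lambda_i,Z_i\rangle=\langle\eta,\sum_{i}\mathbbm{1}_{(s_i,t_i]}\otimes\Phi_i\rangle$, and since the intervals are essentially disjoint and $\Psi(0)=0$, Proposition \ref{Prop:IntegrableFunctions} yields
\[
 \E\Big[\exp\Big(i\sum_{i=1}^N\langle\lambda_i,Z_i\rangle\Big)\Big]=\exp\Big(\sum_{i=1}^N(t_i-s_i)\int_{\R^{n-1}}\Psi(\Phi_i(x))\,dx\Big)=\prod_{i=1}^N\E\big[\exp(i\langle\lambda_i,Z_i\rangle)\big],
\]
which is exactly the computation of the characteristic functional in the proof of Proposition \ref{Prop:WhiteNoiseIndicatorTensor} and gives the required factorization.

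I expect the main obstacle to be bookkeeping rather than any new idea: one must pass carefully from the cylindrical level, where Proposition \ref{Prop:WhiteNoiseIndicatorTensor} lives, to the level of the Besov space $V$, which rests on identifying the Borel $\sigma$-field of $V$ with the $\sigma$-field generated by the norming functionals $\langle\,\cdot\,,\psi_k\rangle$ and on the $\pi$-$\lambda$ reduction of independence to finite-dimensional distributions. Everything else --- the characteristic-function factorization and the identity $\mathbbm{1}_{(0,t]}-\mathbbm{1}_{(0,s]}=\mathbbm{1}_{(s,t]}$ --- is routine and essentially already contained in the proofs of Propositions \ref{Prop:WhiteNoiseIndicatorTensor} and \ref{Prop:IntegrableFunctions}.
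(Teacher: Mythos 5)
Your proposal is correct and follows essentially the same route as the paper: measurability via Proposition \ref{Prop:WhiteNoiseIndicatorTensor} combined with Lemma \ref{Lemma:BesovBorelInCylindrical}, stationarity from the fact that $\eta_{(s,t]}$ is a white noise with triplet $(t-s)(\gamma,\sigma^2,\nu)$, and independence by reducing to a $\pi$-system of cylinder events and factorizing the characteristic function. The only cosmetic difference is that you generate the Borel $\sigma$-field of $V$ from the countable norming family $(\psi_k)$, whereas the paper uses cylinder sets over all Schwartz functions; both work for the same reason.
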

\begin{proof}
 We only show the assertion for $B^{s}_{p,q}(\R^{n-1},\langle\,\cdot\,\rangle^{\rho})$. The proof for $S^{\bar{s}}_{p,p}B([0,T]^{n-1})$ can be carried out the same way.\\
 It follows from Proposition \ref{Prop:WhiteNoiseIndicatorTensor} that the mappings
 \[
   \eta_{(0,t]}\colon(\Omega,\mathcal{F})\to(\mathscr{S}'(\R^{n-1}),\mathcal{B}_c(\mathscr{S}'(\R^{n-1})))
 \]
are measurable. Thus, Lemma \ref{Lemma:BesovBorelInCylindrical} shows that the mappings $\eta_{(0,t]}$ ($t\geq0$) are $B^{s}_{p,q}(\R^{n-1},\langle\,\cdot\,\rangle^{\rho})$-valued random variables, where $B^{s}_{p,q}(\R^{n-1},\langle\,\cdot\,\rangle^{\rho})$-valued is endowed with the Borel $\sigma$-field. Proposition \ref{Prop:WhiteNoiseIndicatorTensor} also shows that the increments are stationary. Hence, it only remains to prove that the increments are independent.\\
Since the sets $\prod_{j=1}^N(-\infty,\alpha_j]$ with $\alpha_j\in\R$ generate the Borel $\sigma$-field in $\R^N$, we also have that sets of the form
\[
 \{u\in\mathscr{S}'(\R^{n-1})\vert \forall j\in\{1,\ldots,N\}:\langle u,\varphi_j\rangle\leq \alpha_j\}
\]
for some $N\in\N$, $\varphi_1,\ldots,\varphi_N\in\mathscr{S}(\R^{n-1})$ and $\alpha_1,\ldots,\alpha_N\in\R$ generate $\mathcal{B}_c(\mathscr{S}(\R^{n-1}))$. But together with Lemma \ref{Lemma:BesovBorelInCylindrical} this implies that the Borel $\sigma$-field of $B^{s}_{p,q}(\R^{n-1},\langle\,\cdot\,\rangle^{\rho})$ is generated by sets of the form 
\[
 \{u\in B^{s}_{p,q}(\R^{n-1},\langle\,\cdot\,\rangle^{\rho})\vert \forall j\in\{1,\ldots,N\}:\langle u,\varphi_j\rangle\leq \alpha_j\}.
\]
This collection of sets is stable under finite intersections and thus, it suffices to verify the independece for preimages of such sets. As in Remark \ref{Rem:WhiteNoiseModel} one can use the characteristic function from Proposition \ref{Prop:IntegrableFunctions} to show that the random variables $\langle \eta_{(t_1,t_0]},\varphi_1\rangle,\ldots,\langle \eta_{(t_N,t_{N-1}]},\varphi_N\rangle$ are independent for all choices of $N\in\N$, $0\leq t_0<t_1\ldots<t_N$, $\varphi_1,\ldots,\varphi_N\in\mathscr{S}(\R^{n-1})$. Thus, for all choices of $M, N_1,\ldots, N_M\in\N$, $0\leq t_0<t_1\ldots<t_M$, $\alpha_{j,k}\in\R$ and  $\varphi_{j,k}\in\mathscr{S}(\R^{n-1})$ $1\leq j\leq M$, $1\leq k\leq N_j$ we have that
{\allowdisplaybreaks
 \begin{align*}
  &\mathbb{P}\bigg(\bigcap_{j=1}^M \big\{\eta_{(t_{j-1},t_j]}\in\{u\in B^{s}_{p,q}(\R^{n-1},\langle\,\cdot\,\rangle^{\rho})): \forall k\in\{1,\ldots,N_j\}:\langle u,\varphi_{j,k}\rangle\leq \alpha_{j,k}\}\big\}\bigg)\\
  =&\mathbb{P}\bigg(\bigcap_{j=1}^M\bigcap_{k=1}^{N_j} \big\{\eta_{(t_{j-1},t_j]}\in\{u\in B^{s}_{p,q}(\R^{n-1},\langle\,\cdot\,\rangle^{\rho})): \langle u,\varphi_{j,k}\rangle\leq \alpha_{j,k}\}\big\}\bigg)\\
  =&\mathbb{P}\bigg(\bigcap_{j=1}^M\bigcap_{k=1}^{N_j} \big\{\langle\eta_{(t_{j-1},t_j]},\varphi_{j,k}\rangle\leq \alpha_{j,k}\big\}\bigg)\\
  =&\prod_{j=1}^M\mathbb{P}\bigg(\bigcap_{k=1}^{N_j} \big\{\langle\eta_{(t_{j-1},t_j]},\varphi_{j,k}\rangle\leq \alpha_{j,k}\big\}\bigg)\\
  =&\prod_{j=1}^M\mathbb{P}\bigg(\big\{\eta_{(t_{j-1},t_j]}\in\{u\in B^{s}_{p,q}(\R^{n-1},\langle\,\cdot\,\rangle^{\rho})): \forall k\in\{1,\ldots,N_j\}:\langle u,\varphi_{j,k}\rangle\leq \alpha_{j,k}\}\big\}\bigg).
 \end{align*}}
 This shows that $(\eta_{(0,t]})_{t\geq0}$ has independent increments.
\end{proof}

\begin{theorem}\label{Prelim:Lemma:ContinuityProbability}
 Consider the situation of Lemma \ref{Prelim:Lemma:BesovValuedProcess} with $s<0$. Let $\alpha,r\in\R$ and $1\leq p_1\leq p'\leq p_2<\infty$ where $p'$ denotes the conjugated H\"older index of $p\in(1,\infty)$.
 \begin{enumerate}[(a)]
  \item Suppose that we have the embedding $L_{p_1}([0,T]\times\R^{n-1})\hookrightarrow L_0(\eta)$ for all $T>0$ and that $\alpha<\min\{\rho,p(n-1)\big(\tfrac{1}{p'}-\tfrac{1}{p_1}\big)\}$. Then $(\eta_{(0,t]})_{t\geq0}$ is a $B^{s}_{p,q}(\R^{n-1},\langle\,\cdot\,\rangle^{\alpha})$-valued L\'evy process.
  \item Suppose that we have the embedding $L_{p_2}([0,T]\times\R^{n-1})\hookrightarrow L_0(\eta)$ for all $T>0$ and $r<\min\{s, (n-1)\big(\tfrac{1}{p_2}-\tfrac{1}{p'}\big)\}$. Then $(\eta_{(0,t]})_{t\geq0}$ is a $B^{r}_{p,q}(\R^{n-1},\langle\,\cdot\,\rangle^{\rho})$-valued L\'evy process.
  \item Suppose that we have the embedding $L_{p_1}([0,T]\times\R^{n-1})\cap L_{p_2}([0,T]\times\R^{n-1})\hookrightarrow L_0(\eta)$ for all $T>0$ as well as the estimates $\alpha<\min\{\rho,p(n-1)\big(\tfrac{1}{p'}-\tfrac{1}{p_1}\big)\}$ and $r<\min\{s, (n-1)\big(\tfrac{1}{p_2}-\tfrac{1}{p'}\big)\}$. Then $(\eta_{(0,t]})_{t\geq0}$ is a $B^{r}_{p,q}(\R^{n-1},\langle\,\cdot\,\rangle^{\alpha})$-valued L\'evy process.
 \end{enumerate}
\end{theorem}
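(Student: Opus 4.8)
The plan is as follows. By Lemma~\ref{Prelim:Lemma:BesovValuedProcess}, in all three cases $(\eta_{(0,t]})_{t\ge0}$ is already a process with stationary and independent increments taking values in the space in question: the target space contains $B^{s}_{p,q}(\R^{n-1},\langle\cdot\rangle^{\rho})$ because of the elementary embeddings $B^{s}_{p,q}(\R^{n-1},\langle\cdot\rangle^{\rho})\hookrightarrow B^{s}_{p,q}(\R^{n-1},\langle\cdot\rangle^{\alpha})$ for $\alpha\le\rho$ (here $\langle\cdot\rangle\ge1$ is used) and $B^{s}_{p,q}\hookrightarrow B^{r}_{p,q}$ for $r\le s$; moreover $\eta_{(0,0]}=0$. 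Hence the only property left to verify is continuity in probability, and since the increments are stationary, $\eta_{(0,t]}-\eta_{(0,t_{0}]}$ has the law of $\eta_{(0,|t-t_{0}|]}$, so it suffices to show that $\eta_{(0,h]}\to0$ in the norm of the target space, in probability, as $h\downarrow0$.

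For this I would prove a quantitative moment estimate and apply Markov's inequality. Fix $\kappa\in(0,\min\{p,q,p_{*},p_{\max}\})$, where $p_{*}=p_{1}$ in case~(a), $p_{*}=p_{2}$ in case~(b), and $p_{\max}$ is the moment index (positive by Definition~\ref{Def:WhiteNoise}), and estimate $\E\big[\|\eta_{(0,h]}\|^{\kappa}_{B^{\tilde s}_{p,q}(\R^{n-1},\langle\cdot\rangle^{\tilde\alpha})}\big]$ for the relevant pair $(\tilde s,\tilde\alpha)\in\{(s,\alpha),(r,\rho),(r,\alpha)\}$. To access the Besov norm I would use a wavelet (or atomic/Littlewood--Paley) characterization, legitimate since $\langle\cdot\rangle^{\tilde\alpha}$ is an admissible weight (Example~\ref{Example:Weight}), which expresses the norm through coefficients $\langle\eta_{(0,h]},\Phi\rangle$ against smooth, compactly supported, dyadically rescaled functions $\Phi$. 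Each such coefficient equals $\langle\eta,\mathbbm{1}_{(0,h]}\otimes\Phi\rangle$, and here the two hypotheses enter together: the identity $\|\mathbbm{1}_{(0,h]}\otimes\Phi\|_{L_{p_{*}}([0,T]\times\R^{n-1})}=h^{1/p_{*}}\|\Phi\|_{L_{p_{*}}(\R^{n-1})}$ produces a factor that vanishes as $h\downarrow0$, while the embedding $L_{p_{*}}([0,T]\times\R^{n-1})\hookrightarrow L_{0}(\eta)$ together with Proposition~\ref{Prop:IntegrableFunctions} converts the $L_{p_{*}}$-norm of the tensor into a bound for $\E\big[|\langle\eta_{(0,h]},\Phi\rangle|^{p_{*}}\big]$ (for heavy-tailed $\nu$ one additionally splits off the large-jump part, which is absent on the relevant bounded region with probability $1-O(h)$).

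Reassembling these coefficient bounds -- bounding the weighted sequence-space quasinorm attached to $B^{\tilde s}_{p,q}(\R^{n-1},\langle\cdot\rangle^{\tilde\alpha})$ by its $\ell^{\kappa}$-sum, which is legitimate for $\kappa\le\min\{p,q\}$ -- requires the dual building blocks to lie in $L_{p_{*}}(\R^{n-1})$ with controlled norms, that is, the embedding $B^{-\tilde s}_{p',q'}(\R^{n-1},\langle\cdot\rangle^{\tilde\alpha(1-p')})\hookrightarrow L_{p_{*}}(\R^{n-1})$, the predual being identified via Proposition~\ref{Prop:DualBesov}. Rewriting this weighted space by Proposition~\ref{Prop:Weighted_Besov_Isomorphism} as $\langle\cdot\rangle^{\tilde\alpha/p}$ times the unweighted $B^{-\tilde s}_{p',q'}(\R^{n-1})$ and using $B^{-\tilde s}_{p',q'}(\R^{n-1})\hookrightarrow L_{p'}(\R^{n-1})$ (valid since $-\tilde s>0$), this reduces either to the Sobolev embedding $B^{-\tilde s}_{p',q'}\hookrightarrow L_{p_{*}}$ when $p_{*}\ge p'$, which amounts exactly to $\tilde s<(n-1)\big(\tfrac1{p_{*}}-\tfrac1{p'}\big)$ (the condition on $r$ in (b) and (c)), or, when $p_{*}\le p'$, to the Hölder estimate $\|\langle\cdot\rangle^{\tilde\alpha/p}g\|_{L_{p_{*}}}\lesssim\|g\|_{L_{p'}}$, which needs $\langle\cdot\rangle^{\tilde\alpha/p}\in L_{r_{0}}(\R^{n-1})$ for $\tfrac1{r_{0}}=\tfrac1{p_{*}}-\tfrac1{p'}$, i.e.\ $\tilde\alpha<p(n-1)\big(\tfrac1{p'}-\tfrac1{p_{*}}\big)$ (the condition on $\alpha$ in (a) and (c)); the remaining constraint $\tilde\alpha\le\rho$ is precisely what guarantees that $\eta_{(0,t]}$ takes values in the target space. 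Case~(c) is the combination of both: there the dual blocks must lie in $L_{p_{1}}\cap L_{p_{2}}$, whence $\mathbbm{1}_{(0,h]}\otimes\Phi\in L_{p_{1}}([0,T]\times\R^{n-1})\cap L_{p_{2}}([0,T]\times\R^{n-1})\hookrightarrow L_{0}(\eta)$ and the same scheme goes through.

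The step I expect to be the main obstacle is this reassembly: one must check that the resulting double sum over the wavelet indices converges under exactly the stated thresholds, which forces a careful bookkeeping of the wavelet normalizations, of the weight $\langle\cdot\rangle^{\tilde\alpha}$ restricted to dyadic cubes, and -- most delicately -- of the small-amplitude behaviour of the moment bound coming from Proposition~\ref{Prop:IntegrableFunctions}; this bound is non-homogeneous (a sum of terms of several homogeneities, as the metric of $L_{0}(\eta)$ is), and it is its dominant small-amplitude term that simultaneously fixes the summability threshold and the positive power of $h$ that is gained. A secondary technical point is turning the continuity of $\eta\colon L_{p_{*}}(\eta)\to L_{p_{*}}(\Omega)$ into a usable, scale-dependent moment bound on the functions $\mathbbm{1}_{(0,h]}\otimes\Phi$; this is handled by comparing $\Psi_{p_{*}}$ with $\Psi_{0}$ on these bounded, compactly supported functions, or again by isolating the large jumps of $\eta$.
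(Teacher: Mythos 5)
Your reduction is the right one and matches the paper's: by Lemma~\ref{Prelim:Lemma:BesovValuedProcess} only continuity in probability is missing, by stationarity of increments it suffices to let $h\downarrow 0$, and you have correctly located where each numerical hypothesis enters, namely in the embedding of the dual space $B^{-\tilde s}_{p',q'}(\R^{n-1},\langle\,\cdot\,\rangle^{\tilde\alpha(1-p')})$ into $L_{p_1}(\R^{n-1})$ resp.\ $L_{p_2}(\R^{n-1})$, which is exactly how the paper invokes \cite[Proposition 3]{Fageot_Fallah_Unser_2017}. The divergence, and the gap, lies in the mechanism you propose for the final step. You want a quantitative bound on $\E\big[\|\eta_{(0,h]}\|^{\kappa}\big]$ obtained by expanding the Besov norm in wavelets, estimating each coefficient $\langle\eta,\mathbbm{1}_{(0,h]}\otimes\Phi\rangle$ through the Rajput--Rosi\'nski functionals, and resumming. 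You flag the resummation yourself as the main obstacle, and it genuinely is one: for a heavy-tailed L\'evy noise the $\kappa$-th moment of a single coefficient is controlled by the non-homogeneous functional $\Psi_\kappa$, the sum over translations at a fixed dyadic scale is infinite and converges only through the weight, and checking that this double sum closes under exactly the stated thresholds (rather than strictly stronger ones) amounts to redoing the technical core of \cite{Fageot_Fallah_Unser_2017} with the $h$-dependence tracked. As written, the decisive analytic step of your route is not carried out, so the proposal does not yet establish the theorem.

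The idea you are missing is Lemma~\ref{Lemma:BesovBorelInCylindrical}~(\ref{Lemma:BesovBorelInCylindrical_Norming}): by separability there is a countable norming family $(\psi_k)_{k\in\N}\subset\mathscr{S}(\R^{n-1})$ of dual norm one with $\|u\|_{B^{s}_{p,q}(\R^{n-1},\langle\,\cdot\,\rangle^{\alpha})}=\sup_{k\in\N}|\langle u,\psi_k\rangle|$. Once the dual space embeds into $L_{p_1}(\R^{n-1})$ (your condition on $\alpha$), the $\psi_k$ are bounded in $L_{p_1}$ by $1$, so $(\mathbbm{1}_{(0,t_0]}-\mathbbm{1}_{(0,t]})\otimes\psi_k\to0$ in $L_{p_1}([0,T]\times\R^{n-1})$ uniformly in $k$ as $t\to t_0$, hence in $L_0(\eta)$, hence the pairings $\langle\eta,(\mathbbm{1}_{(0,t_0]}-\mathbbm{1}_{(0,t]})\otimes\psi_k\rangle$ tend to $0$ in probability uniformly in $k$; the paper then concludes that their supremum over $k$, which is precisely the Besov norm of the increment, tends to $0$ in probability. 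This requires no moment of the norm, no wavelet bookkeeping, and no case distinction on the tail of $\nu$; parts (b) and (c) are verbatim the same with $L_{p_2}$ resp.\ $L_{p_1}\cap L_{p_2}$ in place of $L_{p_1}$. If you insist on your route, you must either carry out the resummation in full or prove an a priori moment bound for the Besov norm of $\eta_{(0,h]}$ with an explicit positive power of $h$; neither is in the proposal.
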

\begin{proof}
 \begin{enumerate}[(a)]
  \item Let $\alpha$ be chosen as in the assertion. Then we have the embedding
  \begin{align*}
   B^{s}_{p,q}(\R^{n-1},\langle\,\cdot\,\rangle^{\rho})\stackrel{d}{\hookrightarrow} B^{s}_{p,q}(\R^{n-1},\langle\,\cdot\,\rangle^{\alpha})
  \end{align*}
so that $\eta_{(0,t]}$ takes values in $B^{s}_{p,q}(\R^{n-1},\langle\,\cdot\,\rangle^{\alpha})$ for $t\geq0$. By Proposition \ref{Prop:DualBesov}, the dual space of $B^{s}_{p,q}(\R^{n-1},\langle\,\cdot\,\rangle^{\alpha})$ is given by $B^{-s}_{p',q'}(\R^{n-1},\langle\,\cdot\,\rangle^{\alpha(1-p')})$. It follows from Lemma \ref{Lemma:BesovBorelInCylindrical} that there is a sequence $(\psi_k)_{k\in\N}\subset \mathscr{S}(\R^{n-1})$ with $\|u\|_{B^{s}_{p,q}(\R^{n-1},\langle\,\cdot\,\rangle^{\alpha})}=\sup_{k\in\N}|\psi_k(u)|$ and $\|\psi_k\|_{B^{-s}_{p',q'}(\R^{n-1},\langle\,\cdot\,\rangle^{\alpha(1-p')})}=1$.
Using \cite[Proposition 3]{Fageot_Fallah_Unser_2017} and the elementary embedding $B^{\tilde{s}}_{p_1,q}(\R^{n-1})\hookrightarrow L_{p_1}(\R^{n-1})$ for $\tilde{s}>0$, we also obtain that
\begin{align*}
  B^{-s}_{p',q'}(\R^{n-1},\langle\,\cdot\,\rangle^{\alpha(1-p')})&\hookrightarrow L_{p_1}(\R^{n-1})\quad \text{if } \alpha<p(n-1)\big(\tfrac{1}{p'}-\tfrac{1}{p_1}\big).
\end{align*}
In this case, we have that $(\psi_k)_{k\in\N}$ is bounded in $L_{p_1}(\R^{n-1})$ with norms not larger than $1$.  Therefore, if $t,t_0\in[0,T]$ then $(\mathbbm{1}_{(0,t_0]}-\mathbbm{1}_{(0,t]})\otimes \psi_k$ goes uniformly in $k\in\N$ to $0$ in $L_{p_1}([0,T]\times\R^{n-1})$ as $t\to t_0$. But since we have the continuous embeddings
  \[
   L_{p_1}([0,T]\times\R^{n-1})\stackrel{\operatorname{id}}{\hookrightarrow} L(\eta) \stackrel{\eta}{\hookrightarrow} L_0(\Omega,\mathcal{F},\mathbb{P}),
  \]
  it follows that  $\eta((\mathbbm{1}_{(0,t_0]}-\mathbbm{1}_{(0,t]})\otimes \psi_k)$ goes uniformly in $k\in\N$ to $0$ in probability as $t\to t_0$. Now Lemma \ref{Lemma:BesovBorelInCylindrical} shows that $\eta((\mathbbm{1}_{(0,t_0]}-\mathbbm{1}_{(0,t]})\otimes \,\cdot\,)$ goes to $0$ in probability with respect to the space $B^{s}_{p,q}(\R^{n-1},\langle\,\cdot\,\rangle^{\alpha})$ as $t\to t_0$. Together with Lemma \ref{Prelim:Lemma:BesovValuedProcess} proves the assertion.  
\item This can be shown with the same proof as part (a). One just has to replace $B^{s}_{p,q}(\R^{n-1},\langle\,\cdot\,\rangle^{\alpha})$ by $B^{r}_{p,q}(\R^{n-1},\langle\,\cdot\,\rangle^{\rho})$ and  $L_{p_1}([0,T]\times\R^{n-1})$ by $ L_{p_2}([0,T]\times\R^{n-1})$. In this case, we have
\begin{align*}
  B^{-r}_{p',q'}(\R^{n-1},\langle\,\cdot\,\rangle^{\rho(1-p')})&\hookrightarrow L_{p_2}(\R^{n-1})\quad \text{if } r< (n-1)\big(\tfrac{1}{p_2}-\tfrac{1}{p'}\big).
\end{align*}
  Except for these changes, the proof can be carried out in the same way.
\item Also this case can be carried out as part (a). One just has to replace $B^{s}_{p,q}(\R^{n-1},\langle\,\cdot\,\rangle^{\alpha})$ by $B^{r}_{p,q}(\R^{n-1},\langle\,\cdot\,\rangle^{\alpha})$ and  $L_{p_1}([0,T]\times\R^{n-1})$ by $ L_{p_1}([0,T]\times\R^{n-1})\cap L_{p_2}([0,T]\times\R^{n-1})$. Of course, in this case both estimates on $r$ and $\alpha$ have to be satisfied.
 \end{enumerate}
\end{proof}

\begin{theorem}\label{Prelim:Lemma:ContinuityProbability_Mixed}
 Consider the situation of Lemma \ref{Prelim:Lemma:BesovValuedProcess} with $\bar{s}'=(s_2,\ldots,s_{n})\in(-\infty,0)^{n-1}$ and $1<p<\infty$. Let $p'$ be the conjugated H\"older index and $1\leq p_1<\infty$ such that $L_{p_1}([0,T]^{n})\hookrightarrow L_0(\eta)$. If $\max\{s_2,\ldots,s_n\}<\frac{1}{p_1}-\frac{1}{p'}$, then the restriction of $(\eta_{(0,t]})_{t\geq0}$ to $[0,T]^{n-1}$ is a $S^{\bar{s}'}_{p,p}B([0,T]^{n-1})$-valued L\'evy process.
\end{theorem}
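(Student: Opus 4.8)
The plan is to transcribe the proof of Theorem~\ref{Prelim:Lemma:ContinuityProbability}, the only genuinely new ingredient being a Sobolev-type embedding for Besov spaces with dominating mixed smoothness on the cube. By Lemma~\ref{Prelim:Lemma:BesovValuedProcess}, the restriction of $(\eta_{(0,t]})_{t\geq0}$ to $[0,T]^{n-1}$ is already an $S^{\bar{s}'}_{p,p}B([0,T]^{n-1})$-valued stochastic process with stationary and independent increments, so the only missing property of a L\'evy process is continuity in probability. By stationarity of the increments this amounts to showing that $\|\eta_{(0,t]}-\eta_{(0,t_0]}\|_{S^{\bar{s}'}_{p,p}B([0,T]^{n-1})}\to0$ in probability as $t\to t_0$, i.e. that $\|\eta_{(t\wedge t_0,\,t\vee t_0]}\|_{S^{\bar{s}'}_{p,p}B([0,T]^{n-1})}\to 0$ in probability as $|t-t_0|\to 0$.

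First I would invoke Lemma~\ref{Lemma:BesovBorelInCylindrical}~(b), with $n$ replaced by $n-1$, to fix a sequence $(\psi_k)_{k\in\N}\subset\mathscr{S}_0([0,T]^{n-1})$ with $\|\psi_k\|_{S^{-\bar{s}'}_{p',p'}B([0,T]^{n-1})}=1$ and $\|u\|_{S^{\bar{s}'}_{p,p}B([0,T]^{n-1})}=\sup_{k}|\langle u,\psi_k\rangle|$ for all $u$ in the space. The crucial observation is that the hypothesis $\max\{s_2,\ldots,s_n\}<\tfrac1{p_1}-\tfrac1{p'}$, together with $\bar{s}'\in(-\infty,0)^{n-1}$, is equivalent to $\min\{-s_2,\ldots,-s_n\}>\max\{0,\tfrac1{p'}-\tfrac1{p_1}\}$, which is exactly what is needed to obtain the embedding
\[
 S^{-\bar{s}'}_{p',p'}B([0,T]^{n-1})\hookrightarrow L_{p_1}([0,T]^{n-1}).
\]
I would derive this from the iterated Besov representation of Corollary~\ref{Cor:Isomorphies_Dominating_Mixed_Domain} by applying, in each of the $n-1$ coordinates, the one-dimensional vector-valued embedding $B^{r}_{p',p'}([0,T];Y)\hookrightarrow L_{p_1}([0,T];Y)$, which is valid for every Banach space $Y$ as soon as $r>\max\{0,\tfrac1{p'}-\tfrac1{p_1}\}$ --- through the chain $B^{r}_{p',p'}\hookrightarrow B^{r-(1/p'-1/p_1)_+}_{\max\{p',p_1\},p'}\hookrightarrow B^0_{\max\{p',p_1\},1}\hookrightarrow L_{\max\{p',p_1\}}\hookrightarrow L_{p_1}([0,T])$, the first step being the Besov--Sobolev embedding (used only when $p_1\geq p'$), the second using $r-(1/p'-1/p_1)_+>0$, and the last being H\"older's inequality on the bounded interval --- and then invoking Fubini to identify the iterated $L_{p_1}$-space with $L_{p_1}([0,T]^{n-1})$. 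Consequently $(\psi_k)_{k\in\N}$ is bounded in $L_{p_1}([0,T]^{n-1})$, say with norm $\le C$.

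From here the argument is essentially verbatim that of Theorem~\ref{Prelim:Lemma:ContinuityProbability}: for $t,t_0\in[0,T]$ one has
\[
 \big\|\mathbbm{1}_{(t\wedge t_0,\,t\vee t_0]}\otimes\psi_k\big\|_{L_{p_1}([0,T]^{n})}=|t-t_0|^{1/p_1}\|\psi_k\|_{L_{p_1}([0,T]^{n-1})}\le C\,|t-t_0|^{1/p_1},
\]
so $\mathbbm{1}_{(t\wedge t_0,\,t\vee t_0]}\otimes\psi_k\to0$ in $L_{p_1}([0,T]^n)$ as $t\to t_0$, uniformly in $k$; by the continuous maps $L_{p_1}([0,T]^n)\hookrightarrow L_0(\eta)\xrightarrow{\eta}L_0(\Omega,\mathcal F,\mathbb P)$ the random variables $\langle\eta,\mathbbm{1}_{(t\wedge t_0,\,t\vee t_0]}\otimes\psi_k\rangle$ go to $0$ in probability uniformly in $k$, and Lemma~\ref{Lemma:BesovBorelInCylindrical} upgrades this to convergence of $\|\eta_{(t\wedge t_0,\,t\vee t_0]}\|_{S^{\bar{s}'}_{p,p}B([0,T]^{n-1})}=\sup_k|\langle\eta,\mathbbm{1}_{(t\wedge t_0,\,t\vee t_0]}\otimes\psi_k\rangle|$ to $0$ in probability; combining this with Lemma~\ref{Prelim:Lemma:BesovValuedProcess} finishes the proof. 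I expect the main obstacle to be the mixed-smoothness Sobolev embedding above: one must check that the single scalar condition $\max_j s_j<\tfrac1{p_1}-\tfrac1{p'}$ is simultaneously the right one in every coordinate and that the one-dimensional embedding tolerates arbitrary Banach-space coefficients, so that the iteration through Corollary~\ref{Cor:Isomorphies_Dominating_Mixed_Domain} goes through; the remaining probabilistic steps are a routine adaptation of Theorem~\ref{Prelim:Lemma:ContinuityProbability}.
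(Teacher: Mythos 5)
Your proposal is correct and follows essentially the same route as the paper: both reduce the problem to the embedding $S^{-\bar{s}'}_{p',p'}B([0,T]^{n-1})\hookrightarrow L_{p_1}([0,T]^{n-1})$, obtained coordinate-wise from a one-dimensional Besov--Sobolev embedding under the condition $-\max\{s_2,\ldots,s_n\}-\tfrac{1}{p'}>-\tfrac{1}{p_1}$, and then rerun the norming-sequence/continuity-in-probability argument of Theorem \ref{Prelim:Lemma:ContinuityProbability}. The only cosmetic difference is that you work with the iterated vector-valued representation from Corollary \ref{Cor:Isomorphies_Dominating_Mixed_Domain} and a slightly different chain of elementary embeddings, while the paper phrases the same step via the $\alpha_p$-tensor-product representation; the two are identified in the paper, so the proofs coincide in substance.
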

\begin{proof}
 The proof is similar as the one of Theorem \ref{Prelim:Lemma:ContinuityProbability}. This time we use that
 \begin{align*}
  (S^{\bar{s}}_{p,p}B([0,T]^{n-1}))'&=S^{-\bar{s}}_{p',p',0}B([0,T]^{n-1})=B^{-s_2}_{p',p',0}([0,T])\otimes_{\alpha_p}\ldots \otimes_{\alpha_p} B^{-s_n}_{p',p',0}([0,T])\ldots)\\
  &\hookrightarrow L_{p_1}([0,T])\otimes_{\alpha_p}\ldots\otimes_{\alpha_p} L_{p_1}([0,T])\cong L_{p_1}([0,T]^{n-1}),
 \end{align*}
where $\otimes_{\alpha_p}$ denotes the tensor product with respect to the $p$-nuclear tensor norm and where we used that
\[
 B^{-\max\{s_2,\ldots,s_n\}}_{p',p',0}([0,T])\hookrightarrow B^{-\max\{s_2,\ldots,s_n\}}_{p',p'}([0,T]) \hookrightarrow B^{\epsilon}_{p_1,p_1}([0,T];E)\hookrightarrow L_{p_1}([0,T];E)
\]
if $-\max\{s_2,\ldots,s_n\}-\frac{1}{p'}>\epsilon-\frac{1}{p_1}$ and $\epsilon>0$. Here, the first embedding follows directly from the definitions. For the second embedding, we refer to \cite[Section 3.3.1]{Triebel_1983}. The last embedding can for example be found in \cite[Section 2.3.2, Remark 3]{Triebel_1992}. With the embedding
\[
 (S^{\bar{s}}_{p,p}B([0,T]^{n-1}))'\hookrightarrow L_{p_1}([0,T]^{n-1})
\]
at hand, the proof is analogous to the one of Theorem \ref{Prelim:Lemma:ContinuityProbability}.
\end{proof}

\begin{remark}
 Embeddings of the form 
 \[
  L_{p_1}(\R^n)\hookrightarrow L_0(\eta),\quad L_{p_2}(\R^n)\hookrightarrow L_0(\eta)\quad\text{or} \quad L_{p_1}(\R^n)\cap L_{p_2}(\R^n)\hookrightarrow L_0(\eta)
 \]
are satisfied for many different kinds of L\'evy white noise, see \cite[Table 1]{Fageot_Fallah_Unser_2017}. Accordingly, Theorem \ref{Prelim:Lemma:ContinuityProbability} and Theorem \ref{Prelim:Lemma:ContinuityProbability_Mixed} can be applied to them. As an example, we carry out the Gaussian case:
\end{remark}

\begin{corollary}\label{Prelim:Corollary:GaussianSpaceTimeRegularity}
 Consider the situation of Theorem \ref{Prelim:Lemma:ContinuityProbability} and suppose that the L\'evy triplet is given by $(0,1,0)$ so that we have the Gaussian case. Then the process $(\eta_{(0,t]})_{t\geq0}$ has a modification that is a Brownian motion with values in $B^{s}_{p,p}(\R^{n-1},\langle\,\cdot\,\rangle^{\rho})$ if 
 \[
  s<-\frac{n-1}{2},\quad \rho<-n+1.
 \]
\end{corollary}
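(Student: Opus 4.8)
The plan is to obtain $(\eta_{(0,t]})_{t\ge0}$ first as a Besov-space-valued L\'evy process via Theorem~\ref{Prelim:Lemma:ContinuityProbability}, and then to upgrade it to a Brownian motion by checking that its increments are Gaussian. Write $E:=B^{s}_{p,p}(\R^{n-1},\langle\,\cdot\,\rangle^{\rho})$ for the target space, where $s<-\tfrac{n-1}{2}$ and $\rho<-n+1$. In the Gaussian case $L_0(\eta)=L_2(\R^n)$ (Remark~\ref{Rem:Remark_Integrable_Functions}), and because $[0,T]\times\R^{n-1}$ has infinite Lebesgue measure while containing balls of arbitrarily small radius, an embedding $L_{p_1}([0,T]\times\R^{n-1})\hookrightarrow L_0(\eta)$ by extension by zero is possible only for $p_1=2$, and likewise $p_2=2$. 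Since Theorem~\ref{Prelim:Lemma:ContinuityProbability} requires $p_1\le p'\le p_2$, the conjugate index is $p'=2$, hence $p=2$, and we take $q=p=2$ as well. By Proposition~\ref{Prop:WhiteNoiseIndicatorTensor}, for each $t>0$ the map $\eta_{(0,t]}$ is a modification of a Gaussian white noise on $\R^{n-1}$ with triplet $(0,t,0)$, so Theorem~\ref{Thm:White_Noise_Regularity}~(a) yields $\mathbb{P}\big(\eta_{(0,t]}\in B^{\sigma}_{2,2}(\R^{n-1},\langle\,\cdot\,\rangle^{\rho})\big)=1$ for every $\sigma<\tfrac{n-1}{2}$, once $\rho<-(n-1)$. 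Fix an auxiliary $\sigma\in(s,0)$, which is possible because $s<-\tfrac{n-1}{2}<0$.

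With these choices the hypotheses of Lemma~\ref{Prelim:Lemma:BesovValuedProcess} and of Theorem~\ref{Prelim:Lemma:ContinuityProbability} hold with $(\sigma,\rho)$ in place of $(s,\rho)$ and with $p=q=2$. I would then apply Theorem~\ref{Prelim:Lemma:ContinuityProbability}~(b) with $p_2=p'=2$: the embedding $L_2([0,T]\times\R^{n-1})\hookrightarrow L_0(\eta)=L_2(\R^n)$ is trivial, and since $(n-1)\big(\tfrac1{p_2}-\tfrac1{p'}\big)=0$ the admissibility condition reduces to $s<\min\{\sigma,0\}=\sigma$, which holds by construction. This gives that $(\eta_{(0,t]})_{t\ge0}$ is an $E$-valued L\'evy process; in particular $\eta_{(0,0]}=0$, the increments are stationary and independent, and the process is continuous in probability.

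Next I would verify that $\big(\lambda(\eta_{(0,t]})\big)_{t\ge0}$ is a real Brownian motion for every $\lambda\in E'$; by the definition used in Theorem~\ref{Thm:Brownian_Paths} this makes $(\eta_{(0,t]})_{t\ge0}$ an $E$-valued Brownian motion after passing to a modification with continuous paths (which exists since its increments carry no jump part). For $\lambda=\langle\,\cdot\,,\varphi\rangle$ with $\varphi\in\mathscr{S}(\R^{n-1})$ this is immediate: Proposition~\ref{Prop:IntegrableFunctions}~(d) with the Gaussian exponent $\Psi(\xi)=-\tfrac{\xi^2}{2}$ gives $\langle\eta_{(0,t]}-\eta_{(0,\tau]},\varphi\rangle=\langle\eta,\mathbbm{1}_{(\tau,t]}\otimes\varphi\rangle\sim N\big(0,(t-\tau)\|\varphi\|_{L_2(\R^{n-1})}^2\big)$, so with the independence and stationarity already established $(\langle\eta_{(0,t]},\varphi\rangle)_{t\ge0}$ is a Brownian motion scaled by $\|\varphi\|_{L_2(\R^{n-1})}$. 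For general $\lambda\in E'=B^{-s}_{2,2}(\R^{n-1},\langle\,\cdot\,\rangle^{-\rho})$ (Proposition~\ref{Prop:DualBesov}) I would use that $\mathscr{S}(\R^{n-1})$ is dense in $E'$ (as in the proof of Lemma~\ref{Lemma:BesovBorelInCylindrical}~(a), via Proposition~\ref{Prop:Weighted_Besov_Isomorphism}) and that $E'\hookrightarrow L_2(\R^{n-1})$ (because $-s\ge0$ and $\langle\,\cdot\,\rangle^{-\rho}\ge1$): choosing $\varphi_k\to\lambda$ in $E'$, one has $|\lambda(\eta_{(0,t]})-\varphi_k(\eta_{(0,t]})|\le\|\lambda-\varphi_k\|_{E'}\|\eta_{(0,t]}\|_E\to0$ almost surely and $\|\varphi_k\|_{L_2(\R^{n-1})}\to\|\lambda\|_{L_2(\R^{n-1})}$, so $\big(\lambda(\eta_{(0,t]})\big)_{t\ge0}$ is a real L\'evy process whose value at time $1$ is an almost-sure limit of centered Gaussians, hence itself centered Gaussian; a real L\'evy process with a Gaussian one-dimensional marginal is a Brownian motion.

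The hard part is this last step: one must identify $E'$ precisely, check that $\mathscr{S}(\R^{n-1})$ is dense in it and that it embeds into $L_2(\R^{n-1})$, and then transfer Gaussianity from Schwartz test functions to arbitrary continuous functionals through the almost-sure approximation above, invoking the elementary fact that almost-sure limits of centered Gaussian variables are centered Gaussian. A secondary point worth stating explicitly is that in the Gaussian case the integrability constraint $L_{p_i}([0,T]\times\R^{n-1})\hookrightarrow L_0(\eta)=L_2(\R^n)$ rigidly forces $p_1=p_2=p'=2$, so no freedom in $p$ is left; everything else (the parameter inequalities, the trivial $L_2$-embedding, continuity in probability) is routine.
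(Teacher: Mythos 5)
Your argument only establishes the corollary for $p=2$, and this is a genuine gap: the statement is asserted in ``the situation of Theorem \ref{Prelim:Lemma:ContinuityProbability}'', i.e.\ for arbitrary $p\in(1,\infty)$, and it is later invoked for general $p$ in Theorem \ref{Prelim:Theorem:SpaceTimeWhiteNoiseRegularity}(a). The flaw is the claim that both embeddings $L_{p_1}([0,T]\times\R^{n-1})\hookrightarrow L_0(\eta)$ and $L_{p_2}([0,T]\times\R^{n-1})\hookrightarrow L_0(\eta)$ are required, which together with $L_0(\eta)=L_2(\R^n)$ forces $p_1=p_2=p'=2$. In Theorem \ref{Prelim:Lemma:ContinuityProbability} only the inequality $1\leq p_1\leq p'\leq p_2<\infty$ is imposed globally; each of the parts (a), (b), (c) assumes only the embedding(s) it explicitly names. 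The argument is therefore a dichotomy. For $1<p\leq 2$ one has $p'\geq 2$, so one may take $p_1=2$ and apply part (a): the smoothness $\tilde{s}=s<-\tfrac{n-1}{2}$ is retained and the weight exponent is relaxed to any $\alpha<\min\{\tilde{\rho},(n-1)(\tfrac{p}{2}-1)\}$; since $(n-1)(\tfrac{p}{2}-1)\geq -n+1$, every $\rho<-n+1$ is admissible. For $2\leq p<\infty$ one has $p'\leq 2$, so one takes $p_2=2$ and applies part (b): the weight $\tilde{\rho}=\rho<-n+1$ is retained and the smoothness is relaxed to any $r<\min\{\tilde{s},(n-1)(\tfrac{1}{2}-\tfrac{1}{p'})\}$, which covers every $s<-\tfrac{n-1}{2}$. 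In each regime only one of the two $L_{p_i}$-embeddings into $L_2(\R^n)$ is needed, so no restriction on $p$ remains. A second, smaller slip: your auxiliary index must satisfy $\sigma<-\tfrac{n-1}{2}$ (not merely $\sigma\in(s,0)$; note also the sign typo ``$\sigma<\tfrac{n-1}{2}$''), since otherwise Theorem \ref{Thm:White_Noise_Regularity} does not place $\eta_{(0,t]}$ in $B^{\sigma}_{p,p}(\R^{n-1},\langle\,\cdot\,\rangle^{\rho})$ and the hypothesis of Lemma \ref{Prelim:Lemma:BesovValuedProcess} fails; take $\sigma\in(s,-\tfrac{n-1}{2})$, or simply work with $\tilde{s}=s$ directly as above.

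On the positive side, your last step --- verifying via the characteristic functional that $\langle\eta_{(0,t]},\varphi\rangle$ is a scaled scalar Brownian motion for Schwartz $\varphi$, and then transferring this to arbitrary $\lambda\in E'$ through the density of $\mathscr{S}(\R^{n-1})$ in $E'$, the embedding $E'\hookrightarrow L_2(\R^{n-1})$, and the stability of centered Gaussian laws under almost sure limits --- addresses the passage from ``L\'evy process'' to ``Brownian motion'' that the paper's proof leaves implicit, and is sound as far as it goes. To complete the corollary it would have to be carried out with $E'=B^{-s}_{p',p'}(\R^{n-1},\langle\,\cdot\,\rangle^{\rho(1-p')})$ for general $p$, where the embedding into $L_2(\R^{n-1})$ is exactly the one already used in the proof of Theorem \ref{Prelim:Lemma:ContinuityProbability}.
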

\begin{proof}
 It follows from Proposition \ref{Prop:WhiteNoiseIndicatorTensor} and Theorem \ref{Thm:White_Noise_Regularity} that $\eta_{(0,t]}$ takes almost surely values in the weighted Besov space $B^{\widetilde{s}}_{p,p}(\R^{n-1},\langle\,\cdot\,\rangle^{\widetilde{\rho}})$ if
  \[
  \widetilde{s}<-\frac{n-1}{2},\quad \widetilde{\rho}<-n+1.
 \]
 By Remark \ref{Rem:Remark_Integrable_Functions} we know that $L_0(\eta)=L_2(\R^n)$. Hence, if $1<p\leq2$ we can consider case (a) in Lemma \ref{Prelim:Lemma:ContinuityProbability} with $p_1=2$. In this case, $\rho$ has to satisfy
 \[
  \rho<\min\{\widetilde{\rho},p(n-1)\big(\tfrac{1}{p'}-\tfrac{1}{p_1}\big)\}=\min\{\widetilde{\rho},(n-1)\big(\tfrac{p}{2}-1\big)\}<-n+1.
 \]
If in turn $2\leq p<\infty$, then we can use Theorem \ref{Prelim:Lemma:ContinuityProbability} (b) with $p_2=2$ so that we obtain the condition
 \[
  s<\min\{\widetilde{s}, (n-1)\big(\tfrac{1}{p_2}-\tfrac{1}{p'}\big)\}<-\frac{n-1}{2}.
 \]
 Altogether, we obtain the assertion.
\end{proof}

\begin{proposition}\label{Prelim:Prop:WhiteNoiseHDI}
 Let $n_1,n_2\in\N$ with $n_1+n_2=n$, $T>0$ and $\mathcal{O}\in\{[0,T]^{n_2},\R^{n_2}\}$. Suppose that there are $p,p_1,p_2\in[1,\infty)$ such that $L_{p_1}([0,T]^{n_1}\times \mathcal{O})\cap L_{p_2}([0,T]^{n_1}\times \mathcal{O})\hookrightarrow L_p(\eta)$ (for example $p=2$ in the symmetric case, see Remark \ref{Rem:Remark_Integrable_Functions}). Then the mapping
\[
 \partial_t \eta_{(0,t]}\colon \mathscr{S}_0([0,T]^{n_1})\otimes \mathscr{S}_0(\mathcal{O})\to L_p(\Omega,\mc{F},\mathbbm{P}),\;\psi\otimes \varphi\mapsto [\partial_1\ldots,\partial_{n_1} \langle \eta_{(0,t]}, \varphi\rangle ](\psi)
\]
extends again to the white noise $\eta$. Here, $[\partial_1\ldots,\partial_{n_1}  \langle \eta_{(0,t]}, \varphi\rangle ](\psi)$ means that we apply the distributional derivatives of the trajectories of $(\langle \eta_{(0,t]}, \varphi\rangle)_{t\geq0}$ to the test function $\psi$.
\end{proposition}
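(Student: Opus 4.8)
The plan is to establish, for every $\psi\in\mathscr{S}_0([0,T]^{n_1})$ and $\varphi\in\mathscr{S}_0(\mathcal{O})$, the identity
\[
 [\partial_1\cdots\partial_{n_1}\langle\eta_{(0,\cdot]},\varphi\rangle](\psi)=\langle\eta,\psi\otimes\varphi\rangle\qquad\mathbb{P}\text{-almost surely.}
\]
By bilinearity this shows that $\partial_t\eta_{(0,t]}$ agrees on the whole algebraic tensor product $\mathscr{S}_0([0,T]^{n_1})\otimes\mathscr{S}_0(\mathcal{O})$ with the restriction of $\eta$; since $\eta\colon L_p(\eta)\to L_p(\Omega)$ is continuous and linear by Proposition~\ref{Prop:IntegrableFunctions} and $\mathscr{S}_0([0,T]^{n_1})\otimes\mathscr{S}_0(\mathcal{O})\subset L_{p_1}([0,T]^{n_1}\times\mathcal{O})\cap L_{p_2}([0,T]^{n_1}\times\mathcal{O})\hookrightarrow L_p(\eta)$, this is precisely the claim that $\partial_t\eta_{(0,t]}$ extends again to $\eta$ (in particular that the left-hand side lies in $L_p(\Omega)$). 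The proof then decomposes into a deterministic computation, a stochastic Fubini argument, and the modification bookkeeping needed to make the pathwise left-hand side meaningful.

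First I would fix $\psi$ and $\varphi$. By the definition of the distributional derivative, and since $\partial_1\cdots\partial_{n_1}\psi$ is bounded with support in $[0,T]^{n_1}$,
\[
 [\partial_1\cdots\partial_{n_1}\langle\eta_{(0,\cdot]}(\omega),\varphi\rangle](\psi)=(-1)^{n_1}\int_{[0,T]^{n_1}}\langle\eta_{(0,t]}(\omega),\varphi\rangle\,\partial_1\cdots\partial_{n_1}\psi(t)\,dt
\]
for every $\omega$ such that $t\mapsto\langle\eta_{(0,t]}(\omega),\varphi\rangle$ is locally integrable on $[0,T]^{n_1}$. On the deterministic side, applying the fundamental theorem of calculus in each of the $n_1$ variables and using that $\psi$, hence every derivative of $\psi$, vanishes on $\{t_i>T\}$, one gets for a.e.\ $x\in\R^{n_1}$ that
\[
 \int_{[0,T]^{n_1}}\mathbbm{1}_{(0,t]}(x)\,\partial_1\cdots\partial_{n_1}\psi(t)\,dt=\int_{x_1}^{\infty}\!\cdots\!\int_{x_{n_1}}^{\infty}\partial_1\cdots\partial_{n_1}\psi(t)\,dt_{n_1}\cdots dt_1=(-1)^{n_1}\psi(x),
\]
so that $(-1)^{n_1}\int_{[0,T]^{n_1}}(\mathbbm{1}_{(0,t]}\otimes\varphi)\,\partial_1\cdots\partial_{n_1}\psi(t)\,dt=\psi\otimes\varphi$ as an element of $L_{p_1}([0,T]^{n_1}\times\mathcal{O})\cap L_{p_2}([0,T]^{n_1}\times\mathcal{O})$.

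Then I would carry out the stochastic Fubini step that pulls $\langle\eta,\cdot\rangle$ through the $t$-integral. The map $t\mapsto(\mathbbm{1}_{(0,t]}\otimes\varphi)\,\partial_1\cdots\partial_{n_1}\psi(t)$ is continuous from the compact cube $[0,T]^{n_1}$ into the Banach space $L_{p_1}([0,T]^{n_1}\times\mathcal{O})\cap L_{p_2}([0,T]^{n_1}\times\mathcal{O})$, because $\|\mathbbm{1}_{(0,t]}-\mathbbm{1}_{(0,t']}\|_{L_{p_i}([0,T]^{n_1})}\to0$ as $t'\to t$, the symmetric difference of the two boxes having small Lebesgue measure, and $\partial_1\cdots\partial_{n_1}\psi$ is continuous and bounded. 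Hence its integral over $[0,T]^{n_1}$ is the limit in that Banach space — and, via the continuous embedding, in $L_p(\eta)$ — of Riemann sums $\sum_{Q\in\pi}(\mathbbm{1}_{(0,t_Q]}\otimes\varphi)\,\partial_1\cdots\partial_{n_1}\psi(t_Q)\,|Q|$ over partitions $\pi$ of $[0,T]^{n_1}$ into boxes $Q$ with tags $t_Q$ and mesh tending to $0$. Applying the continuous linear map $\eta$ and its linearity,
\[
 \Big\langle\eta,\int_{[0,T]^{n_1}}(\mathbbm{1}_{(0,t]}\otimes\varphi)\,\partial_1\cdots\partial_{n_1}\psi(t)\,dt\Big\rangle=\lim_{\pi}\sum_{Q\in\pi}\langle\eta,\mathbbm{1}_{(0,t_Q]}\otimes\varphi\rangle\,\partial_1\cdots\partial_{n_1}\psi(t_Q)\,|Q|
\]
in $L_p(\Omega)$. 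The sums on the right are Riemann sums of the pathwise integral $\int_{[0,T]^{n_1}}\langle\eta_{(0,t]},\varphi\rangle\,\partial_1\cdots\partial_{n_1}\psi(t)\,dt$, which converge $\mathbb{P}$-almost surely along a fixed mesh-zero sequence of partitions once the trajectory regularity fixed below is granted; passing to an a.s.-convergent subsequence of the $L_p(\Omega)$-limit identifies the two, and together with the two displays above this yields $[\partial_1\cdots\partial_{n_1}\langle\eta_{(0,\cdot]},\varphi\rangle](\psi)=(-1)^{n_1}(-1)^{n_1}\langle\eta,\psi\otimes\varphi\rangle=\langle\eta,\psi\otimes\varphi\rangle$.

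The step I expect to be the main obstacle is the measurability and modification bookkeeping: one has to fix a jointly measurable version of $(t,\omega)\mapsto\langle\eta(\omega),\mathbbm{1}_{(0,t]}\otimes\varphi\rangle$ whose trajectories $t\mapsto\langle\eta_{(0,t]}(\omega),\varphi\rangle$ are $\mathbb{P}$-a.s.\ locally bounded and Riemann-integrable on $[0,T]^{n_1}$, so that both the pathwise distributional derivative and the pathwise Riemann integral make sense and coincide with the $L_p(\Omega)$-limits above. For $n_1=1$ this is straightforward: by Lemma~\ref{Prelim:Lemma:BesovValuedProcess} the process $(\langle\eta_{(0,t]},\varphi\rangle)_{t\geq0}$ has stationary and independent increments, and it is continuous in probability since $t\mapsto\mathbbm{1}_{(0,t]}\otimes\varphi$ is continuous into $L_p(\eta)$ and $\eta$ into $L_p(\Omega)$; being a real-valued L\'evy process it therefore has a c\`adl\`ag — in particular locally bounded — modification (cf.\ the proof of Proposition~\ref{Prop:Levy_Paths}). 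For $n_1>1$ one iterates this over the $n_1$ components of $t$, using Theorem~\ref{Prelim:Lemma:ContinuityProbability} and Theorem~\ref{Prelim:Lemma:ContinuityProbability_Mixed} to view $\eta_{(0,\cdot]}$ as a L\'evy process with values in a suitable (weighted, possibly mixed-smoothness) Besov space at each stage and differentiating the c\`adl\`ag cumulative process in one variable at a time. Once such a modification is fixed, the Fubini interchange and the deterministic computation are routine.
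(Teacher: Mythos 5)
Your proof is correct and follows essentially the same route as the paper: integration by parts for the distributional derivative, the deterministic identity turning the iterated integral of $\mathbbm{1}_{(0,t]}$ against $\partial_1\cdots\partial_{n_1}\psi$ back into $\psi$, and the interchange of $\eta$ with the integral justified by continuity of $t\mapsto\mathbbm{1}_{(0,t]}\otimes\varphi$ into $L_{p_1}\cap L_{p_2}\hookrightarrow L_p(\eta)$, followed by density of the algebraic tensor product. The only cosmetic differences are that the paper reduces to $n_1=1$ and iterates, realizing the interchange as a Bochner integral of a continuous $L_{p_1}\cap L_{p_2}$-valued map rather than via Riemann sums, while your extra care about the pathwise (c\`adl\`ag) modification makes explicit a point the paper leaves implicit.
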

\begin{proof}
 By Proposition \ref{Prop:WhiteNoiseIndicatorTensor} it suffices to prove the result for $n_1=1$. Higher dimensions then follow by iteration. So let $\psi\in\mathscr{S}_0([0,T])$ and $\varphi\in\mathscr{S}_0(\mathcal{O})$. First, we define the function
 \[
  K\colon [0,\infty)\to L_{p_1}([0,T])\cap L_{p_2}([0,T]), t\mapsto [s\mapsto \mathbbm{1}_{[0,s)}(t) \psi'(t)].
 \]
This function is continuous and therefore Bochner integrable. Indeed, let $(t_k)_{k\in\N}\subset[0,\infty)$ such that $t_0=\lim_{k\to\infty} t_k$. Then for all $s\neq t_0$ we have that
\[
 \mathbbm{1}_{[0,s)}(t_k)\psi'(t_k)\to\mathbbm{1}_{[0,s)}(t_0)\psi'(t_0)\quad(k\to\infty)
\]
so that the continuity follows by dominated convergence. Moreover, we note that $f~\otimes~\varphi\mapsto \eta(f\otimes\varphi)$ defines a bounded linear operator from $L_{p_1}([0,T]\times \mathcal{O})\cap L_{p_2}([0,T]\times \mathcal{O})$ to $L_p(\Omega,\mc{F},\mathbbm{P})$ by Proposition \ref{Prop:IntegrableFunctions}. Using these two facts we may interchange the order of $\eta$ and the integration in the following computation:
{\allowdisplaybreaks
\begin{align*}
 [\partial_t\eta_{(0,t]}\varphi](\psi)&=[\partial_t\eta(\mathbbm{1}_{(0,t]}\otimes\varphi)](\psi)\\
 &=  (\psi(T)-\psi(0))\eta(\mathbbm{1}_{[0,T]}\otimes\varphi)-\int_0^T\eta(\mathbbm{1}_{(0,t]}\otimes\varphi)\psi'(t)\,dt\\
 &= \int_0^T\eta\big((\mathbbm{1}_{[0,T]}-\mathbbm{1}_{(0,t]})\otimes\varphi\big)\psi'(t)\,dt
 = \int_0^T\eta(\mathbbm{1}_{[0,\,\cdot\,)}(t)\otimes\varphi)\psi'(t)\,dt\\
 &= \int_0^T \eta (K(t)\otimes\varphi)\,dt=\eta\bigg(\int_0^T K(t)\,dt\otimes\varphi\bigg)\\
 &=\eta\bigg(\int_0^T \mathbbm{1}_{[0,\,\cdot\,)}(t)\psi'(t)\,dt\otimes\varphi\bigg)=\eta\bigg(\int_0^{\cdot} \psi'(t)\,dt\otimes\varphi\bigg)\\
 &=\eta(\psi\otimes \varphi).
\end{align*}}
As the tensor product $\mathscr{S}_0([0,T])\otimes \mathscr{S}_0(\mathcal{O})$ is sequentially dense in $\mathscr{S}_0([0,T]\times \mathcal{O})$ (see for example \cite[Theorem 1.8.1]{Amann_2003}), it follows from the continuity of $\eta\colon\ms{S}_0([0,T]\times \mathcal{O})\to L_p(\Omega,\ms{F},\mathbbm{P})$ that $\partial_t\eta_{(0,t]}$ extends to $\eta$.
\end{proof}

 \begin{theorem}
  \label{Prelim:Theorem:SpaceTimeWhiteNoiseRegularity}
  Let $0<T<\infty$ and let $\widetilde{\eta}$ be a L\'evy white noise restricted to $[0,T]\times\R^{n-1}$ with L\'evy triplet $(\gamma,\sigma^2,\nu)$, Blumenthal-Getoor indices $0\leq\underline{\beta}{}_{\infty}\leq\beta_{\infty}\leq 2$ and moment index $0<p_{max}\leq \infty$. Let further $p\in(1,\infty)$ and $\widetilde{p}\in(1,\infty)$ be fixed.
  \begin{enumerate}[(a)]
   \item The Gaussian case:\\
    Suppose that $\gamma=0$ and $\nu=0$. If $t\leq -\frac{1}{2}$, $s<-\frac{n-1}{2}$ and $\rho<-n+1$, then $\widetilde{\eta}$ has a modification $\eta$ such that $$\mathbb{P}\big(\eta\in B^{t}_{\widetilde{p},\infty}([0,T],B^{s}_{p,p}(\R^{n-1},\langle\,\cdot\,\rangle^{\rho}))\big)=1.$$ If $t> -\frac{1}{2}$, $s\geq-\frac{n-1}{2}$ or $\rho\geq-\frac{n-1}{p}$, then we have $$\mathbb{P}\big(\eta\notin B^{t}_{\widetilde{p},\infty}([0,T],B^{s}_{p,p}(\R^{n-1},\langle\,\cdot\,\rangle^{\rho}))\big)=1.$$ 
    \item The compound Poisson case:\\
    Let $p\in(1,\infty)$ and $1\leq p_1<p'<p_2<\infty$ such that $$L_{p_1}([0,T]\times \R^{n-1})\cap L_{p_2}([0,T]\times \R^{n-1})\hookrightarrow L_1(\eta).$$ Let further $t\leq -1$ and $t<-1$ if $p<2$, $s<(n-1)(\frac{1}{p}-1)$ and $\rho<-\frac{(n-1)p}{\min\{p,p_{max}\}}$. Then $\widetilde{\eta}$ has a modification $\eta$ such that $$\mathbb{P}\big(\eta\in B^{t}_{\widetilde{p},\widetilde{p}}([0,T],B^{s}_{p,p}(\R^{n-1},\langle\,\cdot\,\rangle^{\rho}))\big)=1.$$ 
    \item The general non-Gaussian case: \\
        Let $p\in(1,2]\cup2\N$ and $1\leq p_1<p'<p_2<\infty$ such that $$L_{p_1}([0,T]\times \R^{n-1})\cap L_{p_2}([0,T]\times \R^{n-1})\hookrightarrow L_1(\eta).$$ Let further $t\leq -1$ and $t<-1$ if $\tilde{p}< 2$, $s<(n-1)(\frac{1}{\max\{p,\beta_{\infty}\}}-1)$ and $\rho<-\frac{(n-1)p}{\min\{p,p_{max}\}}$. Then $\widetilde{\eta}$ has a modification $\eta$ such that $$\mathbb{P}\big(\eta\in B^{t}_{\widetilde{p},\widetilde{p}}([0,T],B^{s}_{p,p}(\R^{n-1},\langle\,\cdot\,\rangle^{\rho}))\big)=1.$$ 
  \end{enumerate}
 \end{theorem}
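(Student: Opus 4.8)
\noindent The plan is to realise $\eta$ on $[0,T]\times\R^{n-1}$ as the distributional time derivative of the vector-valued L\'evy process $t\mapsto\eta_{(0,t]}$ and to transfer the path regularity of that process; throughout put $E:=B^{s}_{p,p}(\R^{n-1},\langle\,\cdot\,\rangle^{\rho})$. First I would check that $(\eta_{(0,t]})_{t\geq0}$ is an $E$-valued L\'evy process. In the Gaussian case this is Corollary~\ref{Prelim:Corollary:GaussianSpaceTimeRegularity} directly (after rescaling $\sigma^2$): since $s<-\tfrac{n-1}{2}$ and $\rho<-n+1$ the process has a modification which is an $E$-valued Brownian motion. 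In the compound Poisson and general cases one first applies Proposition~\ref{Prop:WhiteNoiseIndicatorTensor} and Theorem~\ref{Thm:White_Noise_Regularity} to the $(n-1)$-dimensional noise $\eta_{(0,t]}$ (triplet $\operatorname{Leb}_1((0,t])(\gamma,\sigma^2,\nu)$) to get that $\eta_{(0,t]}$ takes values in $E$ for the stated $s$ and $\rho$ -- this is where $p_{\max}$ produces the weight bound $\rho<-\tfrac{(n-1)p}{\min\{p,p_{\max}\}}$ -- and then chooses $1\leq p_1<p'<p_2<\infty$ with $L_{p_1}([0,T]\times\R^{n-1})\cap L_{p_2}([0,T]\times\R^{n-1})\hookrightarrow L_0(\eta)$, available by the hypotheses of (b), (c), and applies Theorem~\ref{Prelim:Lemma:ContinuityProbability}(c), which supplies the only missing L\'evy axiom, continuity in probability.

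Next I would invoke the abstract path regularity with $\tilde p$ in the time variable: Theorem~\ref{Thm:Brownian_Paths} gives a modification with paths in $B^{1/2}_{\tilde p,\infty}([0,T];E)$ in the Gaussian case, while Proposition~\ref{Prop:Levy_Paths} gives paths in $B^{0}_{\tilde p,\tilde p}([0,T];E)$ if $\tilde p\geq2$ and in $B^{-\epsilon}_{\tilde p,\tilde p}([0,T];E)$ for all $\epsilon>0$ if $\tilde p<2$. Since differentiation is bounded $B^{\sigma}_{\tilde p,q}([0,T];E)\to B^{\sigma-1}_{\tilde p,q}([0,T];E)$ (extend in $t$ to $\R$, apply the order-one multiplier, restrict), and, via Proposition~\ref{Prop:DominatingMixedSmoothnessFubiniBesov} and the extension--restriction identification behind Corollary~\ref{Cor:Isomorphies_Dominating_Mixed_Domain}, the target is exactly the space in the statement, it remains to identify $\partial_t\eta_{(0,t]}$ with $\widetilde\eta$. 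This is Proposition~\ref{Prelim:Prop:WhiteNoiseHDI} (with $n_1=1$, $n_2=n-1$, $\mathcal{O}=\R^{n-1}$): $\partial_t\eta_{(0,t]}$ and $\widetilde\eta$ agree on $\mathscr{S}_0([0,T])\otimes\mathscr{S}(\R^{n-1})$, hence by density of that tensor product in $\mathscr{S}_0([0,T]\times\R^{n-1})$ and indistinguishability of c\`adl\`ag (resp.\ continuous) modifications, on all of $\mathscr{S}_0([0,T]\times\R^{n-1})$. So $\eta:=\partial_t(\text{path modification})$ is a modification of $\widetilde\eta$ with paths in $B^{-1/2}_{\tilde p,\infty}([0,T];E)$ in case (a), $B^{-1}_{\tilde p,\tilde p}([0,T];E)$ in cases (b)/(c) with $\tilde p\geq2$, and $B^{-1-\epsilon}_{\tilde p,\tilde p}([0,T];E)$ for all $\epsilon>0$ in cases (b)/(c) with $\tilde p<2$; monotonicity in the smoothness then covers every $t$ in the stated range, and measurability of $\eta$ as a random variable into the iterated Besov space follows as in Lemma~\ref{Lemma:BesovBorelInCylindrical} and Lemma~\ref{Prelim:Lemma:BesovValuedProcess}.

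For the sharpness part (Gaussian case) I would argue by contradiction. If $\eta\in B^{t}_{\tilde p,\infty}([0,T];E)$ with $t>-\tfrac12$, its primitive $\eta_{(0,\cdot]}$ lies in $B^{t+1}_{\tilde p,\infty}([0,T];E)$ with $t+1>\tfrac12$, hence (via $B^{\sigma}_{\tilde p,\infty}\hookrightarrow B^{1/2}_{\tilde p,1}$ for $\sigma>\tfrac12$) in $B^{1/2}_{\tilde p,q}([0,T];E)$ for some finite $q$, contradicting the sharpness half of Theorem~\ref{Thm:Brownian_Paths}. If instead $s\geq-\tfrac{n-1}{2}$ (or the weight is too large), then $t+1>\tfrac12>0$ forces $B^{t+1}_{\tilde p,\infty}([0,T];E)\hookrightarrow L_{\tilde p}([0,T];E)$, so $\eta_{(0,t]}\in E$ for a.e.\ $t$ almost surely; but each slice $\eta_{(0,t]}$ is, up to rescaling, an $(n-1)$-dimensional Gaussian white noise, which by Theorem~\ref{Thm:White_Noise_Regularity} almost surely fails to lie in $E$, so a Fubini argument on $\Omega\times[0,T]$ (using measurability of Besov membership from Lemma~\ref{Lemma:BesovBorelInCylindrical}, and Proposition~\ref{Prop:DominatingMixedSmoothnessFubiniBesov} to track the weight) yields the contradiction.

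The main obstacle is the differentiation step: one must interchange, uniformly in $\omega$ outside a single null set, the pathwise distributional $t$-derivative with the action of the space-time white-noise functional on $\mathscr{S}_0([0,T]\times\R^{n-1})$, while keeping exact track of the one lost derivative in the Bochner--Besov scale. Because the path $t\mapsto\eta_{(0,t]}$ does not vanish at $t=T$, the identification on the bounded interval genuinely relies on the extension--restriction reasoning and the boundary-term computation of Proposition~\ref{Prelim:Prop:WhiteNoiseHDI} rather than on a one-line Fourier-multiplier argument on $\R$. A secondary technical point, in (b) and (c), is exhibiting exponents $p_1<p'<p_2$ with the required embedding into $L_0(\eta)$ that are simultaneously compatible with all the constraints on $s$, $\rho$ and $t$.
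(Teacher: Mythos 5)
Your proposal is correct and follows essentially the same route as the paper's proof: realizing $\widetilde\eta$ as $\partial_t\eta_{(0,t]}$ via Proposition \ref{Prelim:Prop:WhiteNoiseHDI}, establishing the $E$-valued L\'evy/Brownian structure through Proposition \ref{Prop:WhiteNoiseIndicatorTensor}, Theorem \ref{Thm:White_Noise_Regularity}, Corollary \ref{Prelim:Corollary:GaussianSpaceTimeRegularity} and Theorem \ref{Prelim:Lemma:ContinuityProbability}, and then transferring path regularity from Theorem \ref{Thm:Brownian_Paths} and Proposition \ref{Prop:Levy_Paths}. Your explicit contradiction argument for the negative Gaussian statement is a more detailed elaboration of what the paper leaves implicit in its appeal to the sharpness half of Theorem \ref{Thm:Brownian_Paths}, but it is the same underlying mechanism.
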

\begin{proof}
Proposition \ref{Prop:WhiteNoiseIndicatorTensor} yields that for all $t_0\in[0,T]$ we have that $\eta(\mathbbm{1}_{(0,t_0]}\otimes\,\cdot\,)$ is a white noise with the L\'evy triplet $(t_0\gamma,t_0\sigma^2,t_0\nu)$. Hence, for fixed $t_0\in[0,T]$ we can use Theorem \ref{Thm:White_Noise_Regularity} in order to obtain that $t_0\mapsto \eta(\mathbbm{1}_{(0,t_0]}\otimes\,\cdot\,)$ almost surely takes values in $B^{s}_{p,p}(\R^{n-1},\langle\,\cdot\,\rangle^{\rho}))$ with certain $s$ and $\rho$, depending on the respective case. Moreover,  it follows from Proposition \ref{Prelim:Prop:WhiteNoiseHDI} that we can write $\eta=\partial_t\eta(\mathbbm{1}_{(0,t]}\otimes\,\cdot\,)$. Hence, the Gaussian case follows from Corollary \ref{Prelim:Corollary:GaussianSpaceTimeRegularity} together with the regularity results on Brownian motions, Theorem \ref{Thm:Brownian_Paths}. The compound Poisson and the general non-Gaussian case follow from Theorem \ref{Prelim:Lemma:ContinuityProbability} together with Proposition \ref{Prop:Levy_Paths}. In order to see this, we note that
            \[
             (n-1)(\tfrac{1}{p}-1)\leq (n-1)(\tfrac{1}{p_2}-\tfrac{1}{p'}),\quad -\tfrac{(n-1)p}{\min\{p,p_{max}\}}\leq p(n-1)\big(\tfrac{1}{p'}-\tfrac{1}{p_1}\big).
            \]
            Hence, the estimates from Theorem \ref{Prelim:Lemma:ContinuityProbability} do not give additional restrictions.
\end{proof}

\begin{remark}
	 As in Remark \ref{Rem:Dropping_Some_Conditions} one can weaken the conditions on $p$ in the non-Gaussian case of Theorem \ref{Prelim:Theorem:SpaceTimeWhiteNoiseRegularity}. More precisely, the assertion of the non-Gaussian case of Theorem \ref{Prelim:Theorem:SpaceTimeWhiteNoiseRegularity} also holds if $p_{\max}\in2\N$ and $p\in(1,\infty)$ or if $p_{\max}\in(N,N+2)$ and $p\in (1,\infty)\setminus(N,N+2)$ for some $N\in2\N$.
\end{remark}

\begin{theorem}   \label{Prelim:Theorem:SpaceTimeWhiteNoiseRegularity_Mixed}Let $\epsilon,T>0$ and let $\tilde{\eta}$ be a L\'evy white noise restricted to $[0,T]^n$ and let $p\in(1,\infty)$. Let further $l=n$, i.e. the smoothness parameters of spaces with dominating mixed smoothness are elements of $\R^n$.
 \begin{enumerate}[(a)]
  \item The Gaussian case:\\
    There is a modification $\eta$ of $\tilde{\eta}$ such that for any 
        \[
         \mathbb{P}(\eta\in S^{(-\tfrac{1}{2}-\epsilon,\ldots,-\tfrac{1}{2}-\epsilon)}_{p,p}([0,T]^n))=1.
        \]
        Moreover, it holds that
        \[
         \mathbb{P}(\eta\in S^{(-\tfrac{1}{2},\ldots,-\tfrac{1}{2})}_{p,p}([0,T]^n))=0.
        \]
  \item The compound Poisson case: Let $p\in(1,\infty)$ and $1\leq p_1<p_2<\infty$ such that $$L_{p_1}([0,T]^n)\cap L_{p_2}([0,T]^n)\hookrightarrow L_1(\eta).$$ Let further $t\leq -1$ and $t<-1$ if $p<2$ and $s<\frac{1}{p}-1$. Then $\widetilde{\eta}$ has a modification $\eta$ such that $$\mathbb{P}\big(\eta\in S^{(t,\ldots,t,s)}_{p,p}([0,T]^n)\big)=1.$$ 
  \item The general non-Gaussian case:\\
  Let $p\in(1,\infty)$ and $1\leq p_1<p_2<\infty$ such that $$L_{p_1}([0,T]\times \R^{n-1})\cap L_{p_2}([0,T]\times \R^{n-1})\hookrightarrow L_1(\eta).$$
  Let further $t\leq -1$ and $t<-1$ if $p< 2$ and $s<\frac{1}{\max\{p,\beta_{\infty}\}}-1$.
  Then there is a modification $\eta$ of $\tilde{\eta}$ such that
        \[
         \mathbb{P}(\eta\in S^{(t,\ldots,t,s)}_{p,p}([0,T]^n))=1.
        \]
 \end{enumerate}
\end{theorem}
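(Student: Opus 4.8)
The plan is to argue by induction on the dimension $n$, at each step splitting $[0,T]^n=[0,T]\times[0,T]^{n-1}$, regarding the first factor as a ``time'' variable, and using the already-established regularity in the remaining $n-1$ directions to produce a Banach-space valued process whose time-derivative is the white noise. For $n=1$ the spaces $S^{\bar s}_{p,p}B([0,T])$ are the ordinary Besov spaces $B^{s_1}_{p,p}([0,T])$, and all three assertions, including the sharpness statement in (a), are precisely the one-dimensional regularity results for restricted L\'evy white noise recorded in Remark~\ref{Rem:Dropping_Some_Conditions}~(b).

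For the inductive step, assume the theorem in dimension $n-1$ and set $\eta_{(0,t]}:=\langle\eta,\mathbbm{1}_{(0,t]}\otimes(\,\cdot\,)\rangle$. By Proposition~\ref{Prop:WhiteNoiseIndicatorTensor}, $\eta_{(0,t]}$ is a modification of a L\'evy white noise on $\R^{n-1}$ with triplet $t(\gamma,\sigma^2,\nu)$; this rescaling preserves the type of the noise, leaves the Blumenthal--Getoor indices $\beta_\infty,\underline\beta{}_\infty$ and the moment index unchanged, and (since every $\Psi_p$ vanishes at $0$) transports the embedding hypothesis $L_{p_1}\cap L_{p_2}\hookrightarrow L_1(\,\cdot\,)$ from $[0,T]^n$ to $[0,T]^{n-1}$. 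Thus the inductive hypothesis shows that for each $t$ the restriction of $\eta_{(0,t]}$ to $[0,T]^{n-1}$ lies almost surely in $F:=S^{\bar s'}_{p,p}B([0,T]^{n-1})$, where $\bar s'=(-\tfrac12-\epsilon,\dots,-\tfrac12-\epsilon)$ in case (a) and $\bar s'=(t,\dots,t,s)$, with $s$ in the last slot, in cases (b) and (c). By Lemma~\ref{Prelim:Lemma:BesovValuedProcess} the process $(\eta_{(0,t]})_{t\ge0}$ then has stationary independent increments in $F$, and by Theorem~\ref{Prelim:Lemma:ContinuityProbability_Mixed} it is in fact an $F$-valued L\'evy process; the side condition $\max_j s'_j<\tfrac1{p_1}-\tfrac1{p'}$ needed there is automatic, since $\tfrac1{p_1}-\tfrac1{p'}=\tfrac1{p_1}+\tfrac1p-1$ strictly dominates both $\tfrac1p-1$ (hence the entry $s$) and $-1$ (hence the entries $t$), while in the Gaussian case one simply takes $p_1=2$ (so that $L_{p_1}([0,T]^n)\hookrightarrow L_2(\R^n)=L_0(\eta)$).

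Next one upgrades to time regularity and differentiates once, using the identity $\eta=\partial_t\eta_{(0,t]}$ on $[0,T]\times[0,T]^{n-1}$ from Proposition~\ref{Prelim:Prop:WhiteNoiseHDI}. In case (a) the $F$-valued L\'evy process is a Brownian motion, so Theorem~\ref{Thm:Brownian_Paths} furnishes a modification with paths in $B^{1/2}_{p,\infty}([0,T];F)$; differentiating in time gives a modification of $\eta$ with paths in $B^{-1/2}_{p,\infty}([0,T];F)\hookrightarrow B^{-1/2-\epsilon}_{p,p}([0,T];F)$. In cases (b) and (c), Proposition~\ref{Prop:Levy_Paths} yields paths in $B^{0}_{p,p}([0,T];F)$ if $p\ge2$ and in $B^{-\epsilon}_{p,p}([0,T];F)$ if $p<2$, hence after differentiating, paths in $B^{t}_{p,p}([0,T];F)$ for $t\le-1$ (respectively $t<-1$). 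Rewriting $F$ as an iterated Besov space and applying Corollary~\ref{Cor:Isomorphies_Dominating_Mixed_Domain} then identifies $B^{-1/2-\epsilon}_{p,p}([0,T];F)$ with $S^{(-1/2-\epsilon,\dots,-1/2-\epsilon)}_{p,p}B([0,T]^n)$ and $B^{t}_{p,p}([0,T];F)$ with $S^{(t,\dots,t,s)}_{p,p}B([0,T]^n)$, closing the induction for the positive statements.

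The sharpness statement in (a) requires no induction. If $\eta\in S^{(-1/2,\dots,-1/2)}_{p,p}B([0,T]^n)$, then by Corollary~\ref{Cor:Isomorphies_Dominating_Mixed_Domain} $\eta\in B^{-1/2}_{p,p}([0,T];F_0)$ with $F_0=S^{(-1/2,\dots,-1/2)}_{p,p}B([0,T]^{n-1})$. Fixing $\varphi\in\mathscr{S}_0([0,T]^{n-1})$ with $\|\varphi\|_{L_2}\ne0$ and noting $\mathscr{S}_0([0,T]^{n-1})\subset F_0'$ by Proposition~\ref{Prop:DualBesovDomain}, applying $\varphi$ gives $\langle\eta,(\,\cdot\,)\otimes\varphi\rangle\in B^{-1/2}_{p,p}([0,T])$, whose time-primitive $t\mapsto\langle\eta_{(0,t]},\varphi\rangle$ (the primitive operator gains one derivative on the bounded interval, and the constant is fixed by $\eta_{(0,0]}=0$) therefore lies in $B^{1/2}_{p,p}([0,T])$. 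On the other hand $t\mapsto\langle\eta,\mathbbm{1}_{(0,t]}\otimes\varphi\rangle$ is a Brownian motion of variance $t\|\varphi\|_{L_2}^2>0$, which by Theorem~\ref{Thm:Brownian_Paths} almost surely does not lie in $B^{1/2}_{p,p}([0,T])$; hence $\mathbb{P}(\eta\in S^{(-1/2,\dots,-1/2)}_{p,p}B([0,T]^n))=0$.

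The delicate points are, first, the bookkeeping that all hypotheses (the embeddings into $L_0$ and $L_1$, the choice of the auxiliary integrability indices, and the case type) genuinely descend to dimension $n-1$, and second, making rigorous the passage ``$\eta=\partial_t\eta_{(0,t]}$ and integration in time gains one derivative'' at the level of Banach-valued Besov spaces of negative order: one must verify that the distributional primitive of an $F$-valued distribution on $[0,T]$ of order $-\tfrac12$ (resp. $t$) has order $+\tfrac12$ (resp. $t+1$), that it agrees up to an $F$-valued constant with $\eta_{(0,\cdot]}$, and that switching between the $\mathscr{S}'$-valued generalized random process and its $F$-valued càdlàg modification does not destroy measurability.
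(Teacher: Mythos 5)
Your proposal is correct and follows essentially the same route as the paper: reduce coordinate-by-coordinate via Proposition~\ref{Prop:WhiteNoiseIndicatorTensor}, regard the primitive in the newly added coordinate as a Banach-space-valued L\'evy process (Lemma~\ref{Prelim:Lemma:BesovValuedProcess}, Theorem~\ref{Prelim:Lemma:ContinuityProbability_Mixed}), invoke the path regularity of Theorem~\ref{Thm:Brownian_Paths} resp.\ Proposition~\ref{Prop:Levy_Paths}, differentiate once in that coordinate, and identify the resulting iterated Besov space with $S^{\bar{s}}_{p,p}B([0,T]^n)$ via Corollary~\ref{Cor:Isomorphies_Dominating_Mixed_Domain}. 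If anything, you are more explicit than the paper's own proof about the differentiation step (Proposition~\ref{Prelim:Prop:WhiteNoiseHDI}), the descent of the integrability hypotheses to lower dimensions, and the sharpness claim in (a), all of which the paper leaves largely implicit.
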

\begin{proof}
 First, we apply Proposition \ref{Prop:WhiteNoiseIndicatorTensor} with $n_1=n-1$. Thus, for fixed $t\in [0,T]^{n-1}$ we have that $\tilde{\eta}_{(0,t]}$ is a one-dimensional L\'evy white noise on $[0,T]$ which almost surely takes values in $B^{\tfrac{1}{2}-\epsilon}_{p,p}([0,T])$ in the Gaussian case or if $p<2$ and in $B^{\tfrac{1}{p}-1-\epsilon}_{p,p}([0,T])$ in the non-Gaussian case with $p>2$ by Theorem \ref{Thm:White_Noise_Regularity}. Now, it follows from Theorem \ref{Prelim:Lemma:ContinuityProbability_Mixed} that for fixed $(t_1,\ldots,t_{n-2})\in[0,T]^{n-2}$ the family $(\tilde{\eta}_{(0,(t_1,\ldots,t_{n-1})]})_{t_{n-1}\in[0,T]}$ is a $B^{\tfrac{1}{2}-\epsilon}_{p,p}([0,T])$-valued Brownian motion in the Gaussian case and a $B^{s}_{p,p}([0,T])$-valued L\'evy process in the other two cases. By Theorem \ref{Thm:Brownian_Paths} it has a modification which almost surely has paths in $B^{\frac{1}{2}}_{p,\infty}([0,T];B^{\tfrac{1}{2}-\epsilon}_{p,p}([0,T]))$ but not better in the Gausssian case and in $B^{t}_{p,p}([0,T];B^{s}_{p,p}([0,T]))$ in the non-Gaussian cases. Together with
 \[
  B^{\frac{1}{2}}_{p,\infty}([0,T];B^{\tfrac{1}{2}-\epsilon}_{p,p}([0,T]))\hookrightarrow B^{\frac{1}{2}-\epsilon}_{p,p}([0,T];B^{\tfrac{1}{2}-\epsilon}_{p,p}([0,T]))
 \]
and Proposition \ref{Cor:Isomorphies_Dominating_Mixed_Domain} we obtain the assertion for $n=2$. For general $n\in\N$ we iterate the same argument using Theorem \ref{Prelim:Lemma:ContinuityProbability_Mixed}.
\end{proof}

\begin{remark}
 \begin{enumerate}[(a)]
  \item Since composition of a white noise with an Euclidean motion as in Remark \ref{Rem:WhiteNoiseModel} again gives a white noise, we can even further improve Theorem \ref{Prelim:Theorem:SpaceTimeWhiteNoiseRegularity_Mixed}. Let for example $B(x_0,r)\subset[0,T]^n$ be a ball in $[0,T]^n$ and consider the restriction of $\tilde{\eta}$ to this ball. By Theorem \ref{Prelim:Theorem:SpaceTimeWhiteNoiseRegularity_Mixed} there is a modification $\eta$ which takes values in $S^{(-\tfrac{1}{2}-\epsilon,\ldots,-\tfrac{1}{2}-\epsilon)}_{p,p}(B(x_0,r)))$ in the Gaussian case. Now we take a rotation $A$ around $x_0$, which is a bijection on $B(x_0,r)$ and an Euclidean motion on $\R^n$. Thus, $\tilde{\eta}\circ A$ is again a white noise so that there is a modification $\eta_1$ which also takes values in $S^{(-\tfrac{1}{2}-\epsilon,\ldots,-\tfrac{1}{2}-\epsilon)}_{p,p}(B(x_0,r)))$. Therefore, for any countable family $(A_n)_{n\in\N}$ of such rotations, there is a modification $\eta$ such that for all $n\in\N$ the rotated noise $\eta\circ A_n$ also takes values in $S^{(-\tfrac{1}{2}-\epsilon,\ldots,-\tfrac{1}{2}-\epsilon)}_{p,p}(B(x_0,r)))$ almost surely. The same argument can also be applied in the non-Gaussian cases.
  \item Theorem \ref{Prelim:Theorem:SpaceTimeWhiteNoiseRegularity} and Theorem \ref{Prelim:Theorem:SpaceTimeWhiteNoiseRegularity_Mixed} are probably not optimal for the general L\'evy case. Looking for example at Theorem \ref{Prelim:Theorem:SpaceTimeWhiteNoiseRegularity_Mixed}, it seems natural to guess that actually $$\mathbb{P}(\eta\in S^{(s,\ldots,s)}_{p,p}([0,T]^n))=1$$ holds.
 \end{enumerate}

\end{remark}

\section{Equations with Boundary Noise}\label{Section:BoundaryNoise}

Now we combine our considerations with the ones on elliptic and parabolic boundary value problems with rough boundary data in \cite{Hummel_2020}. While our results might look quite involved, we would like to point out that there is actually a simple idea behind them: The solutions of the boundary value problems we consider here are arbitrarily smooth. However, as white noise is very rough, there will be singularities at the boundary if one looks at the solution in spaces with higher regularity. The higher the smoothness in time, tangential and normal direction is, the stronger will the singularity be. One can avoid stronger singularities by trading smoothness in the different directions against each other to some extend. The question on how far one can push this will be answered by some technical conditions on the parameters involved. These conditions will make our results look more complicated than they actually are.\\
We should note that Proposition \ref{Prop:Poisson_Equation_Boundary_Noise} on the Poisson equation already follows from the known results, Theorem \ref{Thm:White_Noise_Regularity} and \cite[Theorem 6.1]{Hummel_2020}. Proposition \ref{Prop:Heat_Equation_Boundary_Noise} on the heat equation in turn uses our new result, Theorem \ref{Prelim:Theorem:SpaceTimeWhiteNoiseRegularity}, and \cite[Theorem 6.4]{Hummel_2020}. For this section it is important to keep Example \ref{Example:Weight} in mind. Since $\langle\,\cdot\,\rangle^{\rho}$ is an admissible weight for any $\rho\in\R$, the Besov scale and its dual scale coincide, cf. Proposition \ref{Prop:DualBesov}. Thus, we may apply the results from \cite{Hummel_2020}.\\
There are already several papers in which the singularities at the boundary of solutions of Poisson and heat equation with Dirichlet boundary noise are studied. We refer the reader to \cite{Alos_Bonaccorsi_2002, Brzezniak_et_al_2015, DaPrato_Zabczyk_1993}. This is mainly done by introducing power weights which measure the distance to the boundary of the domain, i.e. weights of the form $\operatorname{\dist}(x,\partial\mathcal{O})^r$ for some $r\in\R$. Such weights are also useful in our approach. But in contrast to \cite{Alos_Bonaccorsi_2002, Brzezniak_et_al_2015, DaPrato_Zabczyk_1993}, we work in spaces with mixed smoothness. This allows us to trade smoothness in normal direction for smoothness in tangential direction. Thus, we can interpret the boundary conditions in a classical sense without having to rely on a mild solution concept.\\
Since we work in $\R^{n}_+$ in this section, the power weight is given by
\[
 |\operatorname{pr}_n|^r\colon \R^n_+\to \R_+, (x_1,\ldots, x_n)\mapsto |x_n|^r.
\]
In this section, we sometimes add subscripts to the domains of function spaces in order to indicate with respect to which variables the spaces should understood. For example we write $B^{s_1}_{p_1,q_1}(\R_t;B^{s_2}_{p_2,q_2}(\R_{+,x_n};B^{s_3}_{p_3,q_3}(\R_{x'}^{n-1})))$ where $\R_t$ corresponds to the time direction, $\R_{+,x_n}$ to the normal direction and $\R_{x'}^{n-1}$ to the tangential directions. $\R^n_{+,x}$ will refer to the space directions.

\begin{proposition}\label{Prop:Poisson_Equation_Boundary_Noise}
 Let $\eta$ be a L\'evy white noise on $\R^{n-1}$ with values in $B^{s}_{p,p}(\R^{n-1}_{x'},\langle\cdot\rangle^{\rho})$ for some parameters $p\in(1,\infty)$, $s,\rho\in\R$, see Theorem \ref{Thm:White_Noise_Regularity}. Let $j\in\{0,1\}$. Then for all $\lambda\in\C\setminus(-\infty,0]$, there is almost surely a unique solution $u\in\mathscr{S}'(\R^n_{+,x})$ of the equation
 \begin{align*}
  \lambda u-\Delta u&=0\quad\text{in }\R^n_{+,x},\\
  \partial_n^j u&=\eta\quad\text{on }\R^{n-1}_{x'},
 \end{align*}
which satisfies
\[
 u\in \bigcap_{r,t\in\R,k\in\N_0,q\in[1,\infty),\atop r-q[t+k-j-s]_+>-1} W^k_q(\R_{+,x_n},|\operatorname{pr}_n|^r;B^{t}_{p,p}(\R^{n-1}_{x'},\langle\cdot\rangle^{\rho})).
\]
Moreover, for all $\sigma>0$, $r,t\in\R$, $k\in\N_0$ and $q\in[1,\infty)$ such that $r-q[t+k-j-s]_+>-1$ there is a constant $C>0$ such that for all $\lambda \in\C\setminus\{\lambda\in\C: |\lambda|>\sigma, |\operatorname{arg} \lambda|<\pi-\sigma\}$ it holds almost surely that
\[
 \|u\|_{W^k_q(\R_{+,x_n},|\operatorname{pr}_n|^r;B^{t}_{p,p}(\R^{n-1}_{x'};\langle\cdot\rangle^{\rho}))}\leq C |\lambda|^{\frac{-1-r+q(k-j)+q[t-s]_+}{2q}} \|\eta\|_{B^{s}_{p,p}(\R^{n-1}_{x'},\langle\cdot\rangle^{\rho})}.
\]
\end{proposition}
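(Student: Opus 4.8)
The plan is to solve the boundary value problem pathwise and then transfer the deterministic statement to the random datum $\eta$. First I would record that, by Example \ref{Example:Weight}, $\langle\,\cdot\,\rangle^{\rho}$ is an admissible weight for every $\rho\in\R$, so that the weighted Besov scale $B^{\bullet}_{p,p}(\R^{n-1}_{x'},\langle\,\cdot\,\rangle^{\rho})$ coincides, up to the obvious modification of the weight, with its own dual scale by Proposition \ref{Prop:DualBesov}. This is precisely the setting in which the elliptic theory of \cite{Hummel_2020} is available, so I may fix an arbitrary datum $g\in B^{s}_{p,p}(\R^{n-1}_{x'},\langle\,\cdot\,\rangle^{\rho})$ and appeal to \cite[Theorem 6.1]{Hummel_2020}: for each $\lambda\in\C\setminus(-\infty,0]$ it produces a unique $u\in\mathscr{S}'(\R^n_{+,x})$ solving $\lambda u-\Delta u=0$ in $\R^n_{+,x}$ and $\partial_n^j u=g$ on $\R^{n-1}_{x'}$ (for both $j=0$ and $j=1$), realized by a bounded linear Poisson-type operator $g\mapsto u$, together with the mapping property into $W^k_q(\R_{+,x_n},|\op{pr}_n|^r;B^{t}_{p,p}(\R^{n-1}_{x'},\langle\,\cdot\,\rangle^{\rho}))$ exactly under the trace condition $r-q[t+k-j-s]_+>-1$, and with the resolvent estimate carrying the factor $|\lambda|^{\frac{-1-r+q(k-j)+q[t-s]_+}{2q}}$ on the stated sector complement. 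The $\lambda$-power is the one obtained from the anisotropic dilation $x\mapsto|\lambda|^{1/2}x$ applied to the $\lambda=1$ estimate, bookkeeping the half-powers contributed by the $k$ normal derivatives, by the weight exponent $r$ in the $L_q(\R_{+,x_n},|\op{pr}_n|^r)$-norm, and by the tangential smoothness gap $[t-s]_+$.

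Second, I would randomize. By hypothesis, together with Theorem \ref{Thm:White_Noise_Regularity}, there is $\Omega_0\in\mathcal{F}$ with $\mathbb{P}(\Omega_0)=1$ on which $\eta(\omega)\in B^{s}_{p,p}(\R^{n-1}_{x'},\langle\,\cdot\,\rangle^{\rho})$, and by Lemma \ref{Lemma:BesovBorelInCylindrical} the induced map $\Omega_0\to B^{s}_{p,p}(\R^{n-1}_{x'},\langle\,\cdot\,\rangle^{\rho})$ is measurable for the Borel $\sigma$-field. I would then set $u(\omega)$ to be the solution from the first step applied to $\eta(\omega)$ on $\Omega_0$; since that Poisson operator is continuous, hence Borel measurable, into $\mathscr{S}'(\R^n_{+,x})$ and into each of the target spaces, $\omega\mapsto u(\omega)$ is measurable as required. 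For every $\omega\in\Omega_0$ the deterministic statement gives at once uniqueness of $u(\omega)$ in $\mathscr{S}'(\R^n_{+,x})$, membership of the single distribution $u(\omega)$ in the whole intersection over the admissible tuples $(r,t,k,q)$ with $r-q[t+k-j-s]_+>-1$ (no new null set is incurred, as this is simply a property of one fixed element), and the pathwise norm bound with $\|\eta\|_{B^{s}_{p,p}(\R^{n-1}_{x'},\langle\,\cdot\,\rangle^{\rho})}$ on the right-hand side. The quantifiers then match the claim, which is ``for each $\lambda$, almost surely''.

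The hard part is not in the stochastics but is imported wholesale from \cite[Theorem 6.1]{Hummel_2020}: establishing the mapping properties of the resolvent/Poisson operator for $\lambda-\Delta$ on the half-space into power-weighted Sobolev spaces with Besov-valued fibers, and in particular pinning down the admissible range of weight exponents by $r-q[t+k-j-s]_+>-1$, which is exactly the bookkeeping of trading tangential and normal smoothness against the strength of the boundary singularity. Once that deterministic input and the dilation scaling for the $|\lambda|$-dependence are granted, the only remaining point to check is the measurability of $\omega\mapsto u(\omega)$, which is immediate from continuity of the solution operator and Lemma \ref{Lemma:BesovBorelInCylindrical}.
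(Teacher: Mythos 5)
Your proposal is correct and follows essentially the same route as the paper, whose proof is simply the one-line combination of Theorem \ref{Thm:White_Noise_Regularity} with the deterministic solvability, mapping properties and resolvent estimates of \cite[Theorem 6.1]{Hummel_2020}, applied pathwise to $\eta(\omega)$. Your additional remarks on the admissibility of the weight $\langle\,\cdot\,\rangle^{\rho}$ (needed to invoke \cite{Hummel_2020}) and on the measurability of $\omega\mapsto u(\omega)$ via the continuity of the solution operator just make explicit what the paper states in the paragraph preceding the proposition and leaves implicit in its proof.
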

\begin{proof}
 This follows directly from combining \ref{Thm:White_Noise_Regularity} and \cite[Theorem 6.1]{Hummel_2020}.
\end{proof}

\begin{remark}
 Note that Proposition~\ref{Prop:Poisson_Equation_Boundary_Noise} yields that $u\in C^{\infty}(\R^n_+)$ with certain singularities which are measured by the weight $|\operatorname{pr}_n|^r$ at the boundary. It is instructive give up some generality in order to see how strong these singularities are in classical function spaces such as $L_2$. Note the L\'evy noises $\eta$ on $\R^{n-1}$ which we consider here always satisfy $\eta\in B^{1+\epsilon-n}_{2,2}(\R^{n-1},\langle\cdot\rangle^{\rho})$ for some $\epsilon>0$. Consider for example the Dirichlet case, i.e. $j=0$. If we take $p=q=2$ and $k=t=0$, then the restriction $r-p[t+k-j-s]_+>-1$ shows that we have to take $r>2n-3-\epsilon$ so that our solution satisfies $u\in L_{2,loc}(\overline{\R^n_+},|\operatorname{pr}_n|^{2n-3})$.
\end{remark}

\begin{proposition}\label{Prop:Heat_Equation_Boundary_Noise}
  Let $\eta$ be a L\'evy white noise on $\R_t\times\R^{n-1}_{x'}$ with values in the space $B^{s_2}_{p_2,\infty,loc}(\R_t;B^{s_1}_{p_1,p_1}(\R^{n-1}_{x'},\langle\cdot\rangle^{\rho}))$ for some parameters $p_1,p_2\in(1,\infty)$, $s_1,s_2,\rho\in\R$, see Theorem \ref{Prelim:Theorem:SpaceTimeWhiteNoiseRegularity}. Let $\tilde{\varphi}\in\mathscr{D}(\R)$, $\varphi=\tilde{\varphi}\otimes \mathbbm{1}_{\R^{n-1}}\in C^{\infty}(\R_t\times\R^{n-1}_{x'})$, $j\in\{0,1\}$ and
  \begin{align*}
   P:=\{(r,t_0,l,k,q):t_0,l\in\R,\, &r\in(-1,\infty),\,k\in\N_0,\,q\in[1,\infty),\\
                        &r-q[t_0+k-j-s]_+>-1,\\
                        &r-2q(l-s_2)-q(k-j)-q[t_0-s_1]_+>-1\}.
  \end{align*}
Then there is almost surely a unique solution $u\in\mathscr{S}'(\R_t\times\R^n_{+,x})$ of the equation
  \begin{align*}
   \partial_tu+u-\Delta u&=0\qquad\;\text{in }\R_t\times \R^n_{+,x'},\\
  \partial_n^j u&=\varphi\cdot \eta\quad\text{on }\R_t\times\R^{n-1}_x,
  \end{align*}
which satisfies
\[
 u\in \bigcap_{(r,t_0,l,k,q)\in P} B^{l}_{p_2,\infty}(\R_t;W^{k}_q(\R_{+,x_n},|\operatorname{pr}_n|^r;B^{t_0}_{p_1,p_1}(\R^{n-1}_{x'},\langle\cdot\rangle^{\rho}))).
\]
Moreover, for all $(r,t_0,l,k,q)\in P$ there is a constant $C>0$ such that almost surely we have the estimate
\[
 \|u\|_{B^{l}_{p_2,\infty}(\R_t;W^{k}_q(\R_{+,x_n},|\operatorname{pr}_n|^r;B^{t_0}_{p_1,p_1}(\R^{n-1}_{x'},\langle\cdot\rangle^{\rho})))}\leq C \|\varphi\cdot \eta \|_{B^{s_2}_{p_2,\infty}(\R_t;B^{s_1}_{p_1,p_1}(\R^{n-1}_{x'},\langle\cdot\rangle^{\rho}))}.
\]
\end{proposition}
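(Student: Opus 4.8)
The plan is to reduce the statement to its deterministic counterpart, \cite[Theorem 6.4]{Hummel_2020}, applied pathwise, combined with the regularity of space-time white noise from Theorem \ref{Prelim:Theorem:SpaceTimeWhiteNoiseRegularity}. By hypothesis $\eta$ is (a modification of) a L\'evy white noise on $\R_t\times\R^{n-1}_{x'}$ with values in $B^{s_2}_{p_2,\infty,loc}(\R_t;B^{s_1}_{p_1,p_1}(\R^{n-1}_{x'},\langle\cdot\rangle^{\rho}))$; the admissible parameter ranges for $(s_1,s_2,\rho)$ are exactly those furnished by Theorem \ref{Prelim:Theorem:SpaceTimeWhiteNoiseRegularity}. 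So I would first fix a set $\Omega_0\in\mathcal{F}$ with $\mathbb{P}(\Omega_0)=1$ on which $\eta(\omega)$ lies in this local space. The role of the cut-off $\varphi=\tilde{\varphi}\otimes\mathbbm{1}_{\R^{n-1}}$ is to turn the locally regular datum into a globally regular one with compact support in time: since $\tilde{\varphi}\in\mathscr{D}(\R)$ and $\varphi$ is constant in $x'$, pointwise multiplication by $\varphi$ is a bounded operator from $B^{s_2}_{p_2,\infty,loc}(\R_t;E)$ into the closed subspace of $B^{s_2}_{p_2,\infty}(\R_t;E)$ of elements supported in $\operatorname{supp}\tilde{\varphi}\times\R^{n-1}$, for any Banach space $E$ (here $E=B^{s_1}_{p_1,p_1}(\R^{n-1}_{x'},\langle\cdot\rangle^{\rho})$); this is the standard localisation property of Besov spaces, which I would either cite or carry out by a partition of unity. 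Hence $\varphi\cdot\eta(\omega)\in B^{s_2}_{p_2,\infty}(\R_t;B^{s_1}_{p_1,p_1}(\R^{n-1}_{x'},\langle\cdot\rangle^{\rho}))$ with compact time support for every $\omega\in\Omega_0$.

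Next I would invoke \cite[Theorem 6.4]{Hummel_2020} for the deterministic parabolic boundary value problem with datum $g:=\varphi\cdot\eta(\omega)$. For such $g$ that theorem provides a unique $u(\omega)\in\mathscr{S}'(\R_t\times\R^n_{+,x})$ solving $\partial_t u+u-\Delta u=0$ in $\R_t\times\R^n_{+,x}$ and $\partial_n^j u=g$ on $\R_t\times\R^{n-1}_{x'}$, which moreover lies in $B^{l}_{p_2,\infty}(\R_t;W^{k}_q(\R_{+,x_n},|\operatorname{pr}_n|^r;B^{t_0}_{p_1,p_1}(\R^{n-1}_{x'},\langle\cdot\rangle^{\rho})))$ for every tuple $(r,t_0,l,k,q)\in P$, together with the asserted norm bound; the point is that the two inequalities defining $P$ are precisely the admissibility conditions of that theorem, the first governing the transfer of tangential smoothness $s_1$ against the normal power-weight exponent $r$, and the second the parabolic trade-off between the normal weight, the time smoothness $s_2$ and the order of the time derivative, which explains the factor $2$ in front of $l-s_2$. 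If \cite{Hummel_2020} is stated on a bounded time interval or with a slightly different weight convention, I would first pass to that setting by a cut-off together with an extension-restriction argument, exactly as in the proof of Proposition \ref{Prop:DualBesovDomain}; this is where Example \ref{Example:Weight} (that $\langle\cdot\rangle^{\rho}$ is admissible, so the Besov scale is self-dual) is used to make the cited results applicable.

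Finally I would address measurability and the phrase ``almost surely unique''. Uniqueness in $\mathscr{S}'(\R_t\times\R^n_{+,x})$ is part of the deterministic theorem and hence holds simultaneously for all $\omega\in\Omega_0$. For measurability I would use that the solution operator $g\mapsto u$ from \cite[Theorem 6.4]{Hummel_2020} is linear and bounded from $B^{s_2}_{p_2,\infty}(\R_t;B^{s_1}_{p_1,p_1}(\R^{n-1}_{x'},\langle\cdot\rangle^{\rho}))$ into each target space in the intersection; composing it with the random variable $\omega\mapsto\varphi\cdot\eta(\omega)$ (measurable by Theorem \ref{Prelim:Theorem:SpaceTimeWhiteNoiseRegularity} and boundedness of multiplication by $\varphi$, relating the Borel and cylindrical $\sigma$-fields as in Lemma \ref{Lemma:BesovBorelInCylindrical}) yields that $\omega\mapsto u(\omega)$ is a random variable with values in each target space, and the pathwise estimate gives the claimed almost sure inequality. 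The main obstacle I anticipate is the bookkeeping: matching the parameter set $P$ line by line to the hypotheses of \cite[Theorem 6.4]{Hummel_2020}, and checking that the localisation/extension step is compatible with the iterated (dominating mixed) structure of the target spaces. The probabilistic content is then essentially immediate once these deterministic facts are in place.
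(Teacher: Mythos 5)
Your proposal is correct and follows essentially the same route as the paper: the paper's proof is simply the observation that the statement follows by combining the pathwise regularity of the space-time white noise from Theorem \ref{Prelim:Theorem:SpaceTimeWhiteNoiseRegularity} with the deterministic parabolic boundary-value result \cite[Theorem 6.4]{Hummel_2020}, applied $\omega$-wise to the datum $\varphi\cdot\eta(\omega)$. Your additional remarks on the localisation by $\varphi$, measurability of the solution map, and the matching of $P$ to the admissibility conditions of the cited theorem are exactly the details the paper leaves implicit.
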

\begin{proof}
 This follows directly from combining \ref{Prelim:Theorem:SpaceTimeWhiteNoiseRegularity} and \cite[Theorem 6.4]{Hummel_2020}.
\end{proof}

\begin{remark}
 \begin{enumerate}[(a)]
  \item The reason why we have to multiply $\eta$ with a cutoff function in time is that we only have local results for the regularity in time of a space-time white noise. If there were global results with some weight in time, then we would be able to remove the cutoff function.
  \item As in the elliptic case, we have $u\in C^{\infty}(\R\times\R^n_+)$ with certain singularities at the boundary. This time we have $s_2\geq -1$ and $s_1\geq 1-n$. Thus, if we want to determine a possible weight for the solution of the Dirichlet problem (i.e. $j=0$) to be in a weighted $L_2$-space, we can take $k=t_0=0$, $l>0$ and  $p_2=q=p_1=2$. The restriction $(r,t_0,l,k,q)\in P$ yields that if we take $r>2n+1$, then $u\in L_{2,loc}(\R\times \overline{\R^n_+},|\operatorname{pr}_n|^r)$.
 \end{enumerate}
\end{remark}

\section*{Acknowledgment}
Most of the ideas for this work have been developed during my doctorate and some of them appeared in a similar form in my Ph.D. thesis. Therefore, I thank the Studienstiftung des deutschen Volkes for the scholarship during my doctorate and my supervisor Robert Denk for his great supervision. I also thank the EU for the current support within the TiPES project funded by the European Union's Horizon 2020 research and innovation programme under grant agreement No 820970.

\bibliographystyle{abbrv}

\end{document}